\documentclass[10pt, final, journal, twoside, web, letterpaper]{ieeecolor}

\usepackage{generic-no-logo}
\usepackage{flushend}
\usepackage{algorithmic}
\usepackage{graphicx}
\usepackage{textcomp}
\def\BibTeX{{\rm B\kern-.05em{\sc i\kern-.025em b}\kern-.08em
    T\kern-.1667em\lower.7ex\hbox{E}\kern-.125emX}}
\markboth{IEEE Transactions on Automatic Control}{Allibhoy and Cort\'es: Control Barrier Function-Based Design of Gradient Flows}

\usepackage{amsmath}
\usepackage{amssymb}
\usepackage{mathtools}
\usepackage{multicol}

\usepackage{nicematrix}
\usepackage[utf8]{inputenc} 
\usepackage[T1]{fontenc}    

\usepackage[nocompress, noadjust]{cite}

\makeatletter
\let\NAT@parse\undefined
\makeatother
\usepackage{hyperref}
\hypersetup{colorlinks=true, linkcolor=blue, citecolor=blue}
\makeatletter
\renewcommand*{\eqref}[1]{%
  \hyperref[{#1}]{\textup{\tagform@{\ref*{#1}}}}%
}
\makeatother

\usepackage[export]{adjustbox}

\usepackage{url}            
\usepackage{amsfonts}       
\usepackage{nicefrac}       
\usepackage{microtype}      
\usepackage[font={footnotesize, sf}]{caption}
\usepackage{color}
\usepackage{subfigure}
\usepackage{orcidlink}

\newcommand{\Cc}{\mathcal{C}}

\newcommand{\Fc}{\mathcal{F}}
\newcommand{\Gc}{\mathcal{G}}

\newcommand{\Jc}{\mathcal{J}}
\newcommand{\Kc}{\mathcal{K}}

\newcommand{\Nc}{\mathcal{N}}
\newcommand{\Oc}{\mathcal{O}}

\newcommand{\Sc}{\mathcal{S}}

\newcommand{\Uc}{\mathcal{U}}

\newcommand{\abs}[1]{\left| #1 \right|}
\newcommand{\sgn}{\textnormal{sgn}}
\newcommand{\dist}{\textnormal{dist}}

\newcommand{\Lojasiewicz}{{\L}ojasiewicz }
\newcommand{\oprocendsymbol}{\hbox{$\bullet$}}
\newcommand{\oprocend}{\relax\ifmmode\else\unskip\hfill\fi\oprocendsymbol}

\pdfminorversion=4

\graphicspath{{epsfiles/}}

\newcommand{\real}{\ensuremath{\mathbb{R}}}
\newcommand{\extendedreal}{\ensuremath{\overline{\mathbb{R}}}}
\newcommand{\until}[1]{\{1,\dots,#1\}}
\newcommand{\ontil}[1]{\{0,\dots,#1\}}

\newcommand{\intpos}{{\mathbb{N}}}

\newcommand{\intnonneg}{{\mathbb{Z}}_{\ge 0}}

\newcommand{\norm}[1]{\lVert#1\rVert}

\newcommand{\normB}[1]{\Big\lVert#1\Big\rVert}

\newcommand{\identity}{I}

\newcommand{\KKT}{X_\textnormal{KKT}}
\newcommand{\proxnormal}{\Nc^{\textnormal{ prox}}}
\newcommand{\ones}{\mathbf{1}}

\newcommand{\ddfrac}[2]{\frac{d #1}{d #2}}

\newcommand{\pfrac}[2]{\frac{\partial #1}{\partial #2}}

\newcommand{\dom}{\textnormal{dom}}

\newcommand{\graph}{\textnormal{graph}}
\newcommand{\Dini}{D^{+}}

\usepackage{chngcntr}
\usepackage{apptools}
\AtAppendix{\counterwithin{lemma}{section}}

\newtheorem{theorem}{Theorem}[section]
\newtheorem{proposition}[theorem]{Proposition}
\newtheorem{lemma}[theorem]{Lemma}

\newtheorem{definition}[theorem]{Definition}

\newtheorem{problem}{Problem}
\newtheorem{remark}[theorem]{Remark}


\newcommand{\map}[3]{#1:#2\rightarrow #3}

\author{Ahmed Allibhoy\textsuperscript{\large \orcidlink{0000-0001-6193-7129}}, \IEEEmembership{Member, IEEE}, and  
Jorge Cort\'es\textsuperscript{\large \orcidlink{0000-0001-9582-5184}}, \IEEEmembership{Fellow, IEEE} 
\thanks{This work was supported in part by NSF under Award
CMMI-2044900 and AFOSR under Award FA9550-19-1-0235. \textit{(Corresponding author: Ahmed
Allibhoy.)}\\
\indent Ahmed Allibhoy is with the Department of Mechanical
Engineering, University of California Riverside, Riverside, CA 92521
USA (e-mail: \href{mailto:aallibho@ucr.edu}{aallibho@ucr.edu}).\\
\indent Jorge Cort\'es is with the Department of Mechanical and Aerospace
Engineering, University of California San Diego, San Diego, CA 92122
USA (e-mail: \href{mailto:cortes@ucsd.edu}{cortes@ucsd.edu})
}}

\title{\huge Control-Barrier-Function-Based 
Design of\\ Gradient Flows for
  Constrained Nonlinear Programming}

\begin{document}
\maketitle
\begin{abstract}
This paper considers the problem of designing a continuous-time
dynamical system that solves a constrained nonlinear optimization
problem and makes the feasible set forward invariant and
asymptotically stable. The invariance of the feasible set makes the
dynamics anytime, when viewed as an algorithm, meaning it
returns a feasible solution regardless of when it is
terminated. Our approach augments the gradient flow of the
objective function with inputs defined by the constraint functions,
treats the feasible set as a safe set, and synthesizes a safe
feedback controller using techniques from the theory of control
barrier functions. The resulting closed-loop system, termed safe gradient flow,
can be viewed as a primal-dual flow, where the state corresponds
to the primal variables and the inputs correspond to the dual ones.
We provide a detailed suite of conditions based on constraint
qualification under which (both isolated and nonisolated) local
minimizers are stable with respect to the feasible
set and the whole state space. Comparisons with other
continuous-time methods for optimization in a simple example
illustrate the advantages of the safe gradient flow.
\end{abstract}

\begin{IEEEkeywords}
  Control Barrier Functions, Gradient Flows,
  Optimization Algorithms, Projected Dynamical Systems.
\end{IEEEkeywords}

\section{Introduction}\label{sec:intro}
\IEEEPARstart{O}{ptimization} problems are ubiquitous in engineering and applied
science.  The traditional emphasis on the numerical analysis of
algorithms is motivated by the implementation on digital platforms.
The alternate viewpoint of optimization algorithms as continuous-time
dynamical systems taken here also has a long history, often as a
precursor of the synthesis of discrete-time algorithms. This viewpoint
has been fruitful for gaining insight into qualitative properties such
as stability and convergence.

For constrained optimization problems, the picture is complicated by
the fact that algorithms may need to ensure convergence to the
optimizer as well as enforce feasibility of the iterates.  The latter
is important in real-time applications, when feasibility guarantees
may be required at all times in case the algorithm is terminated
before completion, or when the algorithm is implemented on a physical
plant where the constraints encode its safe operation.  In this paper,
we show that, just as unconstrained optimization algorithms can be
viewed as dynamical systems, constrained optimization algorithms can
be viewed as control systems. Within this framework, the task of
designing an optimization algorithm for a constrained problem is
equivalent to that of designing a feedback controller for a nonlinear
system.  We use this connection to derive a novel control-theoretic
algorithm for solving constrained nonlinear programs that combines
continuous-time gradient flows to optimize the objective function with
techniques from control barrier functions to maintain invariance of
the feasible set.

\subsection{Related Work}
Dynamical systems and optimization are closely intertwined
disciplines~\cite{KA-LH-HU:58,RWB:91,UH-JBM:94}.  The
work~\cite{AH-SB-GH-FD:21} provides a contemporary review of the
dynamical systems approach to optimization for both constrained an
unconstrained problems, with an emphasis on applications where the
optimization problem is in a feedback loop with a plant, see
e.g.~\cite{AJ-ML-PJB:09,MC-EDA-AB:20,LSPL-JWSP-EM:21}. 
Examples of such scenarios are numerous, including power
systems~\cite{MC-EDA-AB:20,LSPL-JWSP-EM:21}, network
congestion~\cite{SHL-FP-JCD:02}, and
transportation~\cite{GB-JC-JIP-EDA:22-tcns}.

\subsubsection{Flows for Equality Constrained Problems} 
For problems involving only equality constraints,~\cite{KT:80, HY:80}
employ differential geometric techniques to design a vector field that
maintains feasibility along the flow, makes the constraint set
asymptotically stable, and whose solutions converge to critical points
of the objective function.  The work~\cite{JS-IS:00} introduces a
generalized form of this vector field to deal with inequality
constraints in the form of a differential algebraic equation
and explores links with sequential quadratic
programming.

\subsubsection{Projected Gradient Methods}
Another approach to solving nonlinear programs in continuous time
makes use of projected dynamical systems~\cite{AN-DZ:96} by projecting
the gradient of the objective function onto the cone of feasible
descent directions, see e.g.,~\cite{AH-SB-FD:21}. However,
projected dynamical systems are, in general, discontinuous, which from
an analysis viewpoint requires properly dealing with notions and
existence of solutions, cf.~\cite{JC:08-csm-yo}. The
work~\cite{TLF-DHB-NJM-RLT-SG:94} proposes a continuous modification
of the projected gradient method, whose stability is analyzed
in~\cite{YSX-JW:00}.  However, this method projects onto the
constraint set itself, rather than the tangent cone, and may fail when
it is nonconvex.  Another modification is the ``constrained gradient
flow'' proposed in \cite{MM-MIJ:22}, derived using insights from
nonsmooth mechanics, and is well-defined outside the feasible set. The
resulting method is related to the one presented here 
and converges to critical points, though the
dynamics are once again discontinuous, and stability guarantees are
only provided in the case of convexity, which we do not assume.

\subsubsection{Saddle-Point Dynamics}
Convex optimization problems can be solved by searching for saddle
points of the associated Lagrangian via a primal-dual dynamics
consisting of a gradient descent in the primal variable and a gradient
ascent in the dual one. The analysis of stability and convergence of
this method has a long history~\cite{KA-LH-HU:58,TK:56}, with more
recent accounts provided for discrete-time
implementations~\cite{TL-CJ-MIJ:20} and continuous-time
ones~\cite{DF-FP:10,AC-BG-JC:17-sicon, AC-EM-SHL-JC:18-tac}.  These 
methods are particularly well suited for distributed implementation 
on network optimization problems, but they do not leave the feasible 
set invariant. 
%
%

\subsection{Contributions}
We consider the synthesis of continuous-time dynamical systems that
solve constrained optimization problems while making the feasible set
forward invariant and asymptotically stable. Our first contribution is
the design of the safe gradient flow for constrained optimization
using the framework of safety-critical control.  The basic intuition
is to combine the standard gradient flow to optimize the objective
function with the idea of keeping the feasible set safe. To maintain
safety, we augment the gradient dynamics with inputs associated with
the constraint functions and use a control barrier function approach
to design an optimization-based feedback controller that ensures
forward invariance and asymptotic stability of the feasible set. The
approach is primal-dual, in the sense that the states correspond to
the primal variables and the inputs correspond to the dual variables.

Our second contribution unveils the connection of the proposed
dynamics with the projected gradient flow.  Specifically, we provide
an alternate derivation of the safe gradient flow as a continuous
modification of the projected gradient flow, based on a design
parameter. We show that, as the parameter grows to $\infty$, the safe
gradient flow becomes the projected gradient flow. 

In addition to
establishing an interesting parallelism, we build on this equivalence
in our third contribution for understanding the regularity and
stability properties of the safe gradient flow.  We show that the flow
is locally Lipschitz (ensuring the existence and uniqueness of
classical solutions), well defined on an open set containing the
feasibility region (which allows for the possibility of infeasible
initial conditions), that its equilibria exactly correspond to the
critical points of the optimization problem, and that the objective
function is monotonically decreasing along the feasible set of the
optimization problem.  Lastly, we prove that the feasible set is
forward invariant and asymptotically stable.  

Our fourth contribution
consists of a thorough stability analysis of the critical points of
the optimization problem under the safe gradient flow.  We provide a
suite of constraint qualification-based conditions under which
isolated local minimizers are either locally asymptotically stable
with respect to the feasible set, locally asymptotically stable with
respect to the global state space, or locally exponentially stable.
We also provide conditions for semistability of nonisolated local
minimizers and establish global convergence to critical points of the
optimization problem.  Our technical analysis for this builds on a
combination of the Kurdyaka-\Lojasiewicz inequality with a novel
angle-condition Lyapunov test to establish the finite arclength of
trajectories, which we present in the appendix. 

A preliminary version of this work appeared previously
as~\cite{AA-JC:21-cdc}.  The present work significantly expands the
scope of the stability analysis of isolated local minimizers under
weaker assumptions, as well as characterizes the stability of
nonisolated local minimizers, global convergence to critical points,
and highlights the advantages of the safe gradient flow over other
continuous-time methods in optimization.

\subsection{Notation}
We let $\real$ denote the set of real numbers.  For
$v, w \in \real^n$, $v \leq w$ denotes $v_i \leq w_i$
for $i \in \until{n}$. We let $\norm{v}$ denote
the Euclidean norm and
$\norm{v}_\infty = \max_{1 \leq i \leq n} \abs{v_i}$ the infinity
norm. For $y \in \real$, we denote $[y]_+ = \max\{0, y \}$, 
and $\sgn(y) = 1$ if $y > 0$, $\sgn(y) = -1$ if $y < 0$
and $\sgn(y) = 0$ if $y = 0$. We let
$\ones_m \in \real^m$ denote the vector of all ones. For a matrix
$A \in \real^{n \times m}$, we use $\rho(A)$ and $A^\dagger$ to denote
its spectral radius and its Moore-Penrose pseudoinverse, respectively.
We write $A \succeq 0$ (resp., $A \succ 0$) to denote $A$ is positive
semidefinite (resp., $A$ is positive definite).  Given a subset
$\Cc \subset \real^n$, the distance of $x \in \real^n$ to $\Cc$ is
$\dist_\Cc(x) = \inf_{y \in \Cc} \norm{x - y}$.  We let
$\overline{\Cc}$, $\text{int}(\Cc)$, and $\partial \Cc$ denote the
closure, interior, and boundary of $\Cc$, respectively.  Given
$X \subset \real^n$ and $f:X \to \real^m$, the graph of $f$ is
$\graph(f) = \{ (x, f(x)) \mid x \in X \}$. Similarly, given a
set-valued map $\Fc:X \rightrightarrows \real^m$, its graph is
$\graph(\Fc) = \{ (x, y) \mid x \in X, y \in \Fc(x) \}$.  Given
$g:\real^n \to \real$, we denote its gradient by $\nabla g$ and its
Hessian by $\nabla^2 g$. For $g:\real^n \to \real^m$,
$\pfrac{g(x)}{x}$ denotes its Jacobian.  For
$I \subset \{1, 2, \dots, m \}$, we denote by $\pfrac{g_{I}(x)}{x}$
the matrix whose rows are $\{\nabla g_i(x)^\top\}_{i \in I}$.

\section{Preliminaries}\label{sec:preliminaries}
We present notions on invariance, stability, variational analysis,
control barrier functions, and nonlinear programming. The reader
familiar with the material can safely skip the~section.

\subsection{Invariance and Stability Notions}
We recall basic definitions from the theory of ordinary differential
equations~\cite{WH-VSC:08}.  Let $F:\real^n \to \real^n$ be a locally
Lipschitz vector field and consider the dynamical system
$\dot{x} = F(x)$. Local Lipschitzness ensures that, for every initial
condition $x_0 \in \real^n$, there exists $T > 0$ and a unique
trajectory $x:[0, T] \to \real^n$ such that $x(0) = x_0$ and
$\dot{x}(t) = F(x(t))$.  If the solution exists for all $t \geq 0$,
then it is {\emph{forward complete}}. In this case, the \emph{flow map} is
defined by $\Phi_t:\real^n \to \real^n$ such that $\Phi_t(x) = x(t)$,
where $x(t)$ is the unique solution with $x(0) = x$.  The
\emph{positive limit set} of $x \in \real^n$ is
\[
\omega(x) = \bigcap_{T \geq 0} \overline{ \{ \Phi_t(x) \mid t > T \}
}.
\]
A set $\Kc \subset \real^n$ is \emph{forward invariant} if $x \in \Kc$
implies that $\Phi_t(x) \in \Kc$ for all $t \geq 0$. If $\Kc$ is
forward invariant and $x^* \in \Kc$ is an equilibrium, $x^*$ is
\emph{Lyapunov stable relative to $\Kc$} if for every open set $U$
containing~$x^*$, there exists an open set $\tilde{U}$ also
containing~$x^*$ such that for all $x\in \tilde{U} \cap \Kc$,
$\Phi_t(x) \in U \cap \Kc$ for all $t > 0$.  The equilibrium $x^*$ is
\emph{asymptotically stable relative to $\Kc$} if it is Lyapunov
stable relative to $\Kc$ and there is an open set $U$ containing~$x^*$
such that $\Phi_t(x) \to x^*$ as $t \to \infty$ for all
$x \in U \cap \Kc$.  We say $x^*$ is \emph{exponentially stable
  relative to $\Kc$} if it is asymptotically stable relative to $\Kc$
and there exists $\mu > 0$ and an open set $U$ containing~$x^*$ such
that for all $x \in U \cap \Kc$,
$
\norm{\Phi_t(x) - x^*} \leq e^{-\mu t}\norm{x - x^*}
$.
Analogous definitions of Lyapunov stability and asymptotically
stability can be made for sets, instead of individual points.

Consider a forward invariant set $\Kc$ and a set of equilibria~$\Sc $
contained in it, $\Sc \subset \Kc$. We say $x^* \in \Sc$ is
\emph{semistable relative to $\Kc$} if $x^*$ is Lyapunov stable and,
for any open set $U$ containing~$x^*$, there is an open set
$\tilde{U}$ such that for every $x \in \tilde{U} \cap \Kc$, the
trajectory starting at $x$ converges to a Lyapunov stable equilibrium
in $U \cap \Sc$. Note that if $x^*$ is an isolated equilibrium, then
semistability is equivalent to asymptotic stability. For all the
concepts introduced here, when the invariant set is unspecified, we
mean $\Kc = \real^n$.


\subsection{Variational Analysis}
We review basic notions from variational analysis
following~\cite{RTR-RJBW:98}.  The \emph{extended real line} is
$\extendedreal = \real \cup \{\pm \infty \}$.  Given
$f: \real^n \to \extendedreal$, its \emph{domain} is
$\dom(f) = \{ x \in \real^n \mid f(x) \neq \infty, -\infty \}$.  The
\emph{indicator function of $\Cc \subset \real^n$} is
$\delta_\Cc:\real^n \to \extendedreal$,
\begin{align*}
  \delta_\Cc(x) =
  \begin{cases}
    0 & \text{if } x \in \Cc ,
    \\
    \infty & \text{if } x \notin \Cc .
  \end{cases}   
\end{align*}
Note that $\dom(\delta_\Cc) = \Cc$.  For $x \in \dom(f)$ and $d \in
\real^n$, consider the following limits
\begin{subequations}
\begin{align}
  \label{eq:first-dd}
  f'(x; d) &= \lim_{(h, y) \to (0^+, x)} \frac{f(y + hd) - f(x)}{h} ,
  \\
  \label{eq:second-dd}
  f''(x; d) &= \lim_{(h, y) \to (0^+, x)} \frac{f(y + hd) - f(x) - hf'(y; d)}{h^2}. 
\end{align}
\end{subequations}
If the limit in~\eqref{eq:first-dd} (resp. \eqref{eq:second-dd})
exists, $f$ is \emph{directionally differentiable in the direction
  $d$} (resp. \emph{twice directionally differentiable in the
  direction $d$}). By definition, $f'(x; d) = \nabla f(x)^\top d$ if
$f$ is continuously differentiable at~$x$ and
$f''(x; d) = d^\top \nabla^2 f(x) d$ if~$f$ is twice continuously
differentiable at~$x$. 

{Given a dynamical system $\dot{x} = F(x)$ and a 
function $V:\real^n \to \real$, the \emph{upper-right Dini derivative} of 
$V$ along solutions of the system is
\[ \Dini_FV(x) = \limsup_{h \to 0^+}\frac{1}{h}\left[ V(\Phi_h(x)) - V(x) \right], \]
where $\Phi_h$ is the flow map of the system. If $V$ is directionally differentiable 
then $\Dini_FV(x) = V'(x; F(x))$, and if $V$ is differentiable then $\Dini_FV(x) = \nabla V(x)^\top F(x)$.}


The \emph{tangent cone} to $\Cc \subset \real^n$ at $x \in \real^n$ is
\begin{align*}
  T_\Cc(x) &= \big\{ d \in \real^n \mid \exists
  \{t^\nu\}_{\nu=1}^{\infty} \subset (0, \infty), \{ x^\nu
  \}_{\nu=1}^{\infty} \subset \Cc
  \\
  &\qquad t^\nu \to 0^+, x^\nu \to x, \frac{x^\nu - x}{t^\nu} \to d
  \text{ as } \nu \to \infty \big\}.
\end{align*} 
If $\Cc$ is an embedded submanifold of $\real^n$, then the tangent cone
coincides with the usual differential geometric notion of tangent
space.  Let $\Pi_\Cc:\real^n \rightrightarrows \overline{\Cc}$, 
with $ \Pi_\Cc(x)
= \big\{ y \in \overline{\Cc} \mid \norm{x - y} = \dist_\Cc(x) \big\}$, be
the projection map onto $\overline{\Cc}$.  The \emph{proximal normal
  cone} to $\Cc$ at $x$ is
\begin{align*}
  \proxnormal_\Cc(x) &= \big\{ d \in \real^n \mid \exists
  \{t^\nu\}_{\nu=1}^{\infty} \subset (0, \infty),
  \\
  & \qquad \{ (x^\nu, y^\nu) \}_{\nu=1}^{\infty} \subset
  \graph(\Pi_\Cc),
  \\
  & \qquad t^\nu \to 0^+, x^\nu \to x, \frac{x^\nu - y^\nu}{t^\nu} \to
  d \text{ as } \nu \to \infty \big\}.
\end{align*}

\subsection{Safety Critical Control via Control Barrier
  Functions}\label{sec:control-barrier}
We introduce here basic concepts from safety and a method for
synthesizing safe controllers using vector control barrier functions.
Our exposition here slightly
generalizes~\cite{ADA-XX-JWG-PT:17,ADA-SC-ME-GN-KS-PT:19} to set the
stage for dealing with constrained optimization problems later.
Consider a control-affine system
\begin{equation}
  \label{eq:control-affine}
  \dot{x} = F_0(x) + \sum_{i=1}^{r}u_iF_i(x) ,
\end{equation}
with locally Lipschitz vector fields $F_i:\real^n \to \real^n$, for
$i \in \ontil{r}$, and a set $\Uc \subset \real^m$ of valid control
inputs.  Let $\Cc \subset \real^n$ represent the set of states where
the system can operate safely and $u:X \to \Uc$ be a locally Lipschitz
feedback controller, with $X \subset \real^n$ a set containing
$\Cc$. The closed-loop system~\eqref{eq:control-affine} under $u$ is
\textit{safe} with respect to $\Cc$ if $\Cc$ is forward invariant
under the closed-loop system.

Feedback controllers can be certified to be safe by resorting to the
notion of control barrier function, which we here generalize for
convenience.  Let $\Cc \subset X \subset \real^n$ and $m, k \in \intnonneg$.
{A \emph{$(m, k)$-vector control barrier function} (VCBF)} of
$\Cc$ on $X$ relative to $\Uc$ is a continuously differentiable
function $\phi:\real^n \to \real^{m + k}$ such that the following
properties hold.
\begin{enumerate}
\item {The safe set can be expressed using $m$ inequality
    constraints and $k$ equality constraints:}
  \[ \begin{aligned}
      \Cc = \{x \in \real^n \mid \;&\phi_i(x) \leq 0, \;1 \leq i \leq m, \\
      &\phi_j(x) = 0, \; m + 1 \le j \leq m + k \} ;
    \end{aligned}
  \]
  \item there exists $\alpha > 0$ such that the map
  $K:\real^n \rightrightarrows \Uc$,
  \begin{align*}
    K_\alpha(x) & = \big\{ u \in \Uc \mid \\
          & \Dini_{F_0}\phi_i(x) \!+\! \sum_{\ell=1}^{r}u_\ell\Dini_{F_\ell}\phi_i(x) \!+\! \alpha
           \phi_i(x) \leq 0,
    \\
         & \Dini_{F_0}\phi_j(x) + \sum_{\ell=1}^{r}u_\ell\Dini_{F_\ell}\phi_j(x) + \alpha
           \phi_j(x) = 0,
    \\
         &  1 \leq i \leq m, \; m+1 \le j \leq m + k \big\},
  \end{align*}
  takes nonempty values for all $x \in X$. 
\end{enumerate}

In the special case where $m=1$ and $k = 0$, this definition
coincides with the usual notion of control barrier
function~\cite[Definition~2]{ADA-SC-ME-GN-KS-PT:19}, where the class
$\Kc$ function is linear, {and the Lie derivative has been replaced 
with the upper-right Dini derivative}.  {In general, the problem of finding a
  suitable VCBF $\phi$ is problem-specific: in many cases, the
  function naturally emerges from formalizing mathematically the
  safety specifications one seeks to enforce.}  The use of
vector-valued functions instead of scalar-valued ones allows to
consider a broader class of safe sets.  If $\phi$ is a VCBF and $u$ is
a feedback where $u(x) \in K_\alpha(x)$, it follows that along
solutions to \eqref{eq:control-affine},
$\ddfrac{}{t}\phi_i(x) \leq -\alpha \phi_i(x)$ for $1 \leq i \leq m$
and $\ddfrac{}{t}\phi_j(x) = -\alpha \phi_j(x)$ for
$m + 1 \le j \leq m + k$, which implies safety of $\Cc$. This is
stated formally in the next generalization of~\cite[Thm.
2]{ADA-SC-ME-GN-KS-PT:19}.


\begin{lemma}[Safe feedback control]\label{lem:safe-feedback}
  Consider the system~\eqref{eq:control-affine} with safety set $\Cc$
  and let $\phi $ be a vector control barrier function for $\Cc$ on
  $X$.  Then if MFCQ holds on $\Cc$, any feedback controller $u:X \to \Uc$ satisfying
  $u(x) \in K_\alpha(x)$ for all $x \in X$ and such that
  $ x \mapsto F_0(x) + \sum_{i = 0}^{m}u_i(x)F_i(x)$ is locally
  Lipschitz renders $\Cc$ forward invariant and asymptotically stable.
\end{lemma}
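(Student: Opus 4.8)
The plan is to show that along any trajectory of the closed-loop system each component $\phi_i$ satisfies a differential inequality (or equality for the equality constraints) of the form $\Dini_F \phi_i(x) \le -\alpha \phi_i(x)$, and then invoke a comparison argument to conclude forward invariance and attractivity of $\Cc$. Concretely, I would first observe that since $u(x)\in K_\alpha(x)$ for all $x\in X$, the definition of $K_\alpha$ gives, for the closed-loop vector field $F(x) = F_0(x) + \sum_{\ell=1}^r u_\ell(x)F_\ell(x)$ and for $1 \le i \le m$,
\[
  \Dini_F\phi_i(x) = \Dini_{F_0}\phi_i(x) + \sum_{\ell=1}^r u_\ell(x)\Dini_{F_\ell}\phi_i(x) \le -\alpha\phi_i(x),
\]
using that $\phi$ is continuously differentiable, so $\Dini_F\phi_i(x) = \nabla\phi_i(x)^\top F(x)$ and the Dini derivative splits linearly over the sum defining $F$. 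The analogous identity with equality holds for $m+1\le j\le m+k$. Local Lipschitzness of $F$ (assumed) guarantees existence and uniqueness of the maximal trajectory $x(\cdot)$ through each initial condition.

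Next, for a fixed trajectory $x(t)$ defined on its maximal interval of existence, set $y_i(t) = \phi_i(x(t))$. Since $t\mapsto x(t)$ is $C^1$ and $\phi_i$ is $C^1$, $y_i$ is $C^1$ and $\dot y_i(t) = \nabla\phi_i(x(t))^\top F(x(t)) = \Dini_F\phi_i(x(t)) \le -\alpha\, y_i(t)$. The Gr\"onwall/comparison lemma then yields $y_i(t) \le e^{-\alpha t} y_i(0)$ for all $t$ in the interval of existence, and $y_j(t) = e^{-\alpha t} y_j(0)$ for the equality indices. Hence if $x(0)\in\Cc$, i.e. $y_i(0)\le 0$ and $y_j(0)=0$, then $y_i(t)\le 0$ and $y_j(t)=0$ for all $t\ge0$ in the domain, so the trajectory stays in $\Cc$; combined with the fact that $\Cc$ is closed and $X$ is open containing $\Cc$ (so trajectories starting in $\Cc$ cannot escape $X$ and remain bounded if $\Cc$ is, or more carefully, stay in $\Cc$ which is where $F$ is defined), a standard escape-time argument upgrades the maximal interval to $[0,\infty)$, giving forward invariance of $\Cc$.

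For asymptotic stability of $\Cc$ as a set, I would use the same exponential bounds: for $x(0)\in X$, each $y_i(t)\le e^{-\alpha t}[y_i(0)]_+$ and $|y_j(t)| \le e^{-\alpha t}|y_j(0)|$, so the ``constraint violation'' $V(x) = \sum_{i=1}^m [\phi_i(x)]_+ + \sum_{j=m+1}^{m+k}|\phi_j(x)|$ decays to zero exponentially along trajectories. Since $V$ is continuous, nonnegative, and $V^{-1}(0)=\Cc$, and $\dist_\Cc(x)\to 0$ whenever $V(x)\to 0$ along a bounded trajectory (using continuity of $\phi$ and compactness of sublevel sets locally), this establishes that $\Cc$ is attractive; Lyapunov stability relative to the state space follows because $V$ is a Lyapunov-type function that is non-increasing (indeed strictly decreasing off $\Cc$) along the flow, so small initial $V$ keeps $\dist_\Cc$ small. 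The main obstacle I anticipate is the bookkeeping around completeness of solutions and the passage from ``$V\to 0$'' to ``$\dist_\Cc\to 0$'': one must be careful that trajectories do not leave $X$ in finite time and that boundedness is available (e.g. locally near $\Cc$, or by the structure of the problem), since the exponential decay of $V$ alone does not immediately control $\dist_\Cc$ without some properness or local boundedness of the map $\phi$. The rest is a routine comparison-lemma computation.
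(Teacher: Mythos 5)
Your proposal is correct and follows essentially the same route as the paper, which does not spell out a proof but justifies the lemma by exactly the observation you make — that $u(x)\in K_\alpha(x)$ forces $\ddfrac{}{t}\phi_i(x)\leq -\alpha\phi_i(x)$ for the inequality components and $\ddfrac{}{t}\phi_j(x)=-\alpha\phi_j(x)$ for the equality components along closed-loop solutions — and then appeals to the comparison-lemma argument generalizing Theorem~2 of the cited control barrier function literature. The subtleties you flag (forward completeness of solutions and passing from decay of the constraint violation to decay of $\dist_\Cc$) are real but are likewise left implicit in the paper's treatment.
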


While Lemma~\ref{lem:safe-feedback} provides sufficient conditions for
feedback controller to be safe, it does not specify how to synthesize
it. A common technique~\cite{ADA-XX-JWG-PT:17} is, for each $x \in X$,
to define $u(x)$ as the minimum-norm element of ${K_\alpha(x)}$.  Here, we
pursue an alternative design of the form
\begin{equation}
  \label{eq:qp-controller}
  u(x) \in \underset{u \in K_\alpha(x)}{\text{argmin}} \Big\{
\Big \lVert \sum_{i=1}^{r}u_iF_i(x) \Big \rVert^2 \Big\}. 
\end{equation}
This design has the interpretation of finding a controller which
guarantees safety while modifying the drift term
in~\eqref{eq:control-affine} as little as possible.  In general,
Lipschitz continuity of the closed-loop dynamics under either design is not guaranteed,
cf.~\cite{BJM-MJP-ADA:15}, so additional assumptions may be needed to
establish safety via Lemma~\ref{lem:safe-feedback}.

\subsection{Optimality Conditions for Nonlinear Programming}
We present the basic background on necessary conditions for
optimality~\cite{DPB:99}.  Consider a nonlinear program of the form
\begin{equation}\label{eq:opt}
  \begin{aligned} 
    &\underset{x \in \real^n}{\text{minimize}} &&f(x) \\
    &\text{subject to} &&g(x) \leq 0 \\
    &&&h(x)  = 0,
  \end{aligned}
\end{equation}
where $f: \real^n \to \real$, $g:\real^n \to \real^m$,
$h:\real^n \to \real^k$ are continuously differentiable. Let
\begin{equation}
  \label{eq:feasible-set}
  \Cc = \{x \in \real^n \mid g(x) \leq 0, h(x) =0 \},
\end{equation}
denote its
feasible set.  Necessary conditions for optimality can be derived
provided that the feasible set satisfies appropriate constraint
qualification conditions.  Let the active constraint, constraint
violation, and inactive constraint sets be
\begin{align*}
  I_0(x) &= \{1 \leq i \leq m \mid g_i(x) = 0\} ,
  \\
  I_+(x) &= \{1 \leq i \leq m \mid g_i(x) > 0\} ,
  \\
  I_-(x) &= \{1 \leq i \leq m \mid g_i(x) < 0\},
\end{align*}
respectively. 
We say that the optimization problem~\eqref{eq:opt} satisfies
\begin{itemize}
\item the \emph{Mangasarian-Fromovitz Constraint Qualification} (MFCQ) if
  $\{ \nabla h_j(x) \}_{j=1}^{k}$ are linearly independent and there
  exists $\xi \in \real^n$ such that $\nabla h_j(x)^\top \xi = 0$ for
  all $j \in \until{k}$ and $\nabla g_i(x)^\top \xi < 0$ for all
  $i \in I_0(x)$;
\item the \emph{Extended Mangasarian-Fromovitz Constraint Qualification}
  (EMFCQ) if $\{ \nabla h_j(x) \}_{j=1}^{k}$ are linearly independent
  and there exists $\xi \in \real^n$ such that
  $\nabla h_j(x)^\top \xi = 0$ for all $j \in \until{k}$ and
  $\nabla g_i(x)^\top \xi < 0$ for all $i \in I_0(x) \cup I_+(x)$;
\item the \emph{Linear Independence Constraint Qualification} (LICQ) at $x$,
  if
  $\{ \nabla g_i(x) \}_{i \in I_0(x)} \cup \{\nabla h_j(x)
  \}_{j=1}^{k}$ are linearly independent.
\end{itemize}
%
%

We say that the \emph{constant rank condition} (CRC) holds at $x \in \real^n$ if there exists 
an open set $U$ containing $x$ such that for all $I \subset I_0(x)$ 
the rank of $\{ \nabla g_i(y) \}_{i \in I} \cup \{\nabla h_j(y) \}_{j=1}^{k}$ 
is constant for all $y \in U$. 

If $x^* \in \Cc$ is a local minimizer, and any of the constraint qualification 
conditions hold at $x^*$, then there exists $u^* \in
\real^m$ and $v^* \in \real^k$ such that the 
\emph{Karash-Kuhn-Tucker} conditions hold,
\begin{subequations}
  \label{eq:KKT_opt}
  \begin{align}
    \label{eq:KKT_opt1}
    \nabla f(x^*) + \pfrac{g(x^*)}{x}^\top u^* +
    \pfrac{h(x^*)}{x}^\top v^* &= 0,
    \\
    g(x^*) &\leq 0,
    \\
    h(x^*) &= 0 ,
    \\
    u^* &\geq 0 ,
    \\
    (u^*)^\top g(x^*) &= 0 .
  \end{align}
\end{subequations}
The pair $(u^*, v^*)$ are called \emph{Lagrange multipliers}, 
and the triple $(x^*, u^*, v^*)$ satisfying \eqref{eq:KKT_opt} is referred
to as a \emph{KKT triple}. We denote the set of KKT
points of \eqref{eq:opt} by
\begin{align*}
  \KKT = \{x^* \in \real^n \mid \exists &(u^*, v^*) \in \real^{m}
  \times \real^{k}
  \\
  &\text{ such that $(x^*, u^*, v^*)$ solves \eqref{eq:KKT_opt}} \}.
\end{align*}

\section{Problem Formulation}\label{sec:problem}
Our goal is to solve the optimization problem~\eqref{eq:opt} by
designing a dynamical system $\dot x = F(x)$ that converges to its
solutions.  The dynamics should enjoy the following properties.
\begin{enumerate}
\item trajectories should remain feasible if they start from a
  feasible point. This can be formalized by asking the feasible set
  $\Cc$, {defined in \eqref{eq:feasible-set}}, to be forward invariant;
\item trajectories that start from an infeasible point should converge
  to the set of feasible points. This can be formalized by requiring
  that $F$ is well defined on an open set containing $\Cc$, and that
  $\Cc$ as a set is asymptotically stable with respect to the
  dynamics.
\end{enumerate}
The requirement (i) ensures that, when viewed as an algorithm, the
dynamics is \emph{anytime}, meaning that it is guaranteed to return a
feasible solution regardless of when it is terminated. The requirement
(ii) ensures in particular that trajectories beginning from infeasible
initial conditions approach the feasible set and, if the solutions of
the optimization~\eqref{eq:opt} belong to the interior of the feasible
set, such trajectories enter it in finite time, never to leave it
again.  The problem is summarized next.

\begin{problem}\label{prob:main}
  Find an open set $X$ containing $\Cc$ and design a vector field $F:X
  \to \real^n$ such that the system $\dot{x} = F(x)$ satisfies the
  following properties.
  \begin{enumerate}
  \item $F$ is locally Lipschitz on $X$;
  \item $\Cc$ is forward invariant and asymptotically stable;
  \item $x^*$ is an equilibrium if and only if $x^* \in \KKT$;
  \item $x^*$ is asymptotically stable if $x^*$ is a isolated local
    minimizer.
  \end{enumerate}
\end{problem}

\section{Constrained Nonlinear Programming via Safe Gradient
  Flow}\label{safe-gradient-design}

In this section we introduce our solution to Problem~\ref{prob:main}
in the form of a dynamical system called the \emph{safe gradient
  flow}. We present two interpretations of this system: first
from the perspective of safety critical control, where we augment the
standard gradient flow with an input and design a feedback controller
using the procedure outlined in Section \ref{sec:control-barrier},
and second as an approximation of the projected gradient
flow. Interestingly, both interpretations are equivalent.

\subsection{Safe Gradient Flow via Feedback Control}
Consider the control-affine system
\begin{equation}\label{eq:augment}
  \begin{aligned}
    \dot{x} 
    &=-\nabla f(x) - \pfrac{g(x)}{x}^\top u - \pfrac{h(x)}{x}^\top v.
  \end{aligned}
\end{equation}
One can interpret this system as the standard gradient flow of~$f$
modified by a ``control action'' incorporating 
the gradients of the constraint functions.  The intuition is that the drift term takes
care of optimizing $f$ toward the minimizer, and this direction can be
modified with the input if the trajectory gets close to the boundary
of the feasible set, cf. Figure~\ref{fig:motivation}.
\begin{figure}[!t]
  \centering \subfigure[]{
  \includegraphics[width=0.48\linewidth]{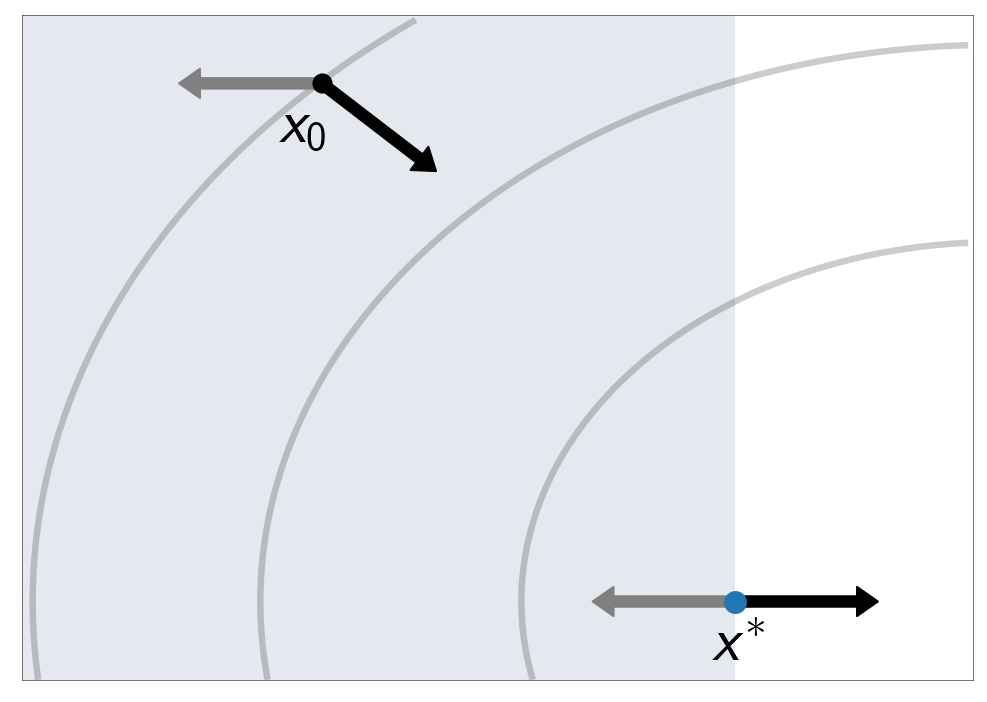}}
\subfigure[]{
  \includegraphics[width=0.48\linewidth]{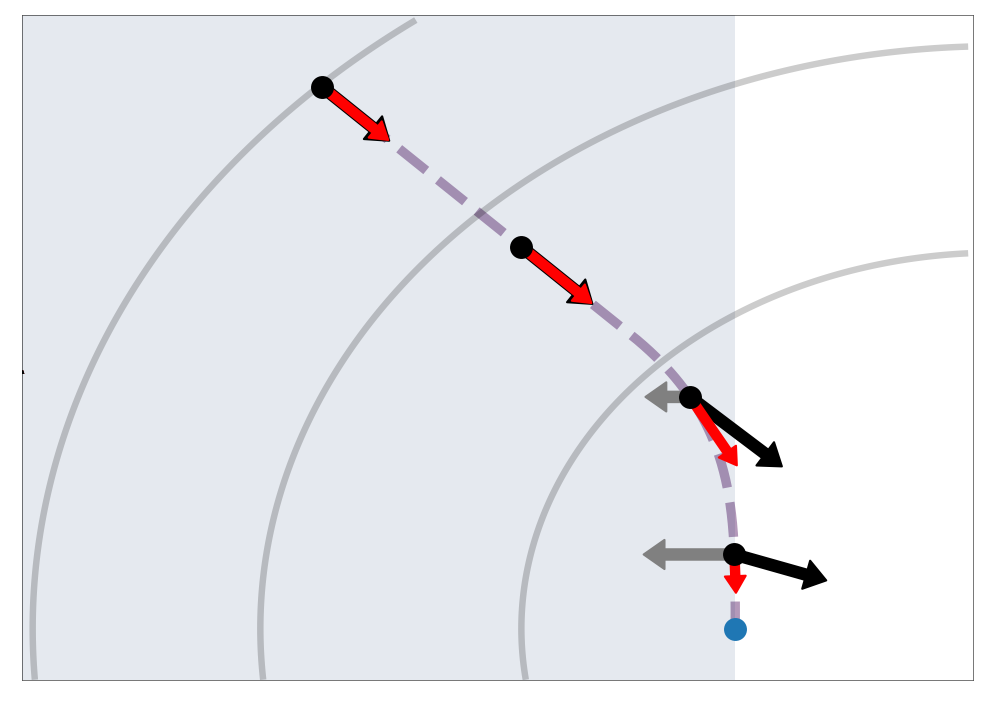}}
  \caption{Intuition behind the design of the safe gradient flow.  Grey
  lines are the level curves of the objective function and the shaded
  region is~$\Cc$.  In (a), the initial condition is $x_0$ and the
  minimizer is $x^*$, with $-\nabla f(x)$ in black and $-\nabla g(x)$
  in gray at both points. In (b), the dashed line is a trajectory
  of $\dot{x} = -\nabla f(x) - u \nabla g(x)$ starting from $x_0$. The
  black vectors are $-\nabla f(x)$, the gray vectors are
  $-u\nabla g(x)$, and the red vectors are $\dot{x}$.  Deep in the
  interior of $\Cc$, one has $u \approx 0$, as following
  the gradient of $f$ does not jeopardize feasibility while minimizing
  it. As the trajectory approaches the boundary, $u$ increases to keep
  the trajectory in~$\Cc$. }
  \label{fig:motivation}
  \vspace*{-4ex}
\end{figure}
Our idea for the controller design is to only modify the drift when
the feasibility of the state is endangered. We accomplish this by
looking at the feasible set $\Cc$ as a safe set and using 
$\map{\phi=(g, h)}{\real^n}{\real^{m + k}}$ 
as an $(m, k)$-vector control barrier function to synthesize the feedback
controller, as described next.

Let $\alpha > 0$ be a design parameter. {For reasons of space, we
  sometimes omit the arguments of functions when they are clear from
  the context}. Following Section~\ref{sec:control-barrier}, define
the admissible control set as
\begin{equation}\label{eq:Kalpha}
  \begin{aligned}
    K_\alpha&(x) = \Big\{(u, v) \in \real_{\geq 0}^m \times \real^k \, \big|
    \\
    & -\pfrac{g}{x}\pfrac{g}{x}^\top u -\pfrac{g}{x}\pfrac{h}{x}^\top v
    \leq \pfrac{g}{x}\nabla f(x) - \alpha g(x)
    \\
    & -\pfrac{h}{x}\pfrac{g}{x}^\top u - \pfrac{h}{x}\pfrac{h}{x}^\top
    v = \pfrac{h}{x}\nabla f(x) - \alpha h(x) \Big\}.
  \end{aligned}
\end{equation}
The next result shows that $\phi$ is a valid VCBF for \eqref{eq:augment}. 

\begin{lemma}[Vector control barrier function
    for~\eqref{eq:augment}]\label{lemma:feedback-feasibility}
  Consider the optimization problem~\eqref{eq:opt}. If MFCQ holds for
  all $x \in \Cc$, then there exists an open set $X$ containing $\Cc$
  such that the function $\map{\phi=(g, h)}{\real^n}{\real^{m + k}}$
  is a valid $(m, k)$-VCBF for~\eqref{eq:augment} on $X$ relative to
  $\Uc = \real^{m}_{\geq 0} \times \real^{k}$.
\end{lemma}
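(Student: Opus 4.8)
The plan is to verify the two defining properties of an $(m,k)$-VCBF in turn. Property~(i) is immediate: by construction $\phi=(g,h)$ is continuously differentiable because $g$ and $h$ are, and the set $\{x\mid \phi_i(x)\leq 0,\ 1\leq i\leq m,\ \phi_j(x)=0,\ m+1\leq j\leq m+k\}$ is exactly the feasible set $\Cc$ of~\eqref{eq:feasible-set}. The substance of the proof is therefore property~(ii): constructing an open $X\supseteq\Cc$ on which the admissible set $K_\alpha(x)$ in~\eqref{eq:Kalpha} is nonempty, for any fixed $\alpha>0$.

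First I would reformulate nonemptiness of $K_\alpha(x)$ as feasibility of a linear system. Computing the Dini derivatives of $\phi$ along the drift $F_0=-\nabla f$ and the input vector fields (the columns of $-\pfrac{g}{x}^\top$ and $-\pfrac{h}{x}^\top$) shows that, for a pair $(u,v)$, setting $w:=-\nabla f(x)-\pfrac{g}{x}^\top u-\pfrac{h}{x}^\top v$, one has $(u,v)\in K_\alpha(x)$ if and only if $w$ solves the \emph{affine} system
\[
  (P_x):\qquad \nabla g_i(x)^\top w \leq -\alpha g_i(x)\ \ (1\leq i\leq m),\qquad \nabla h_j(x)^\top w = -\alpha h_j(x)\ \ (1\leq j\leq k).
\]
This already gives that $K_\alpha(x)\neq\emptyset$ whenever $(P_x)$ is feasible, but a generic $(P_x)$-feasible $w$ need not have the required structural form with $u\geq 0$. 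To recover it, I would take the convex quadratic program $\min_w \tfrac12\|w+\nabla f(x)\|^2$ subject to the constraints of $(P_x)$: if $(P_x)$ is feasible this feasible set is nonempty and closed and the objective is coercive, so a minimizer $w^\star$ exists, and since the constraints are affine its KKT conditions hold with no constraint qualification (cf.~\cite{DPB:99}). Stationarity reads $w^\star+\nabla f(x)=-\pfrac{g}{x}^\top u^\star-\pfrac{h}{x}^\top v^\star$ with $u^\star\geq 0$, and primal feasibility is precisely $(P_x)$; substituting back shows $(u^\star,v^\star)\in K_\alpha(x)$. Hence $K_\alpha(x)\neq\emptyset$ \emph{if and only if} $(P_x)$ is feasible. (Geometrically $w^\star$ is the projection of $-\nabla f(x)$ onto the polyhedron $(P_x)$, which is the point of contact with the projected gradient flow.)

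It then remains to show $(P_x)$ is feasible for every $x$ in some open neighborhood of $\Cc$, and this is the only place MFCQ is used. For $x\in\Cc$ feasibility is trivial, since $h(x)=0$ and $g(x)\leq 0$ make $w=0$ admissible. To extend to nearby (possibly infeasible) points, fix $\bar x\in\Cc$ and let $\xi$ be the MFCQ direction at $\bar x$, so $\nabla h_j(\bar x)^\top\xi=0$ for all $j$ and $\nabla g_i(\bar x)^\top\xi<0$ for all $i\in I_0(\bar x)$. For $x$ near $\bar x$ the matrix $\pfrac{h}{x}$ still has full row rank, so I would set $w_0(x):=-\alpha\,\pfrac{h}{x}^\top\big(\pfrac{h}{x}\pfrac{h}{x}^\top\big)^{-1}h(x)$ — continuous in $x$, vanishing at $\bar x$, and satisfying the equality part of $(P_x)$ exactly — and $\tilde\xi(x):=P(x)\xi$, where $P(x)$ is the orthogonal projector onto $\ker\pfrac{h}{x}(x)$, which is continuous near $\bar x$, lies in $\ker\pfrac{h}{x}(x)$, and equals $\xi$ at $\bar x$. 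The candidate $w(x):=w_0(x)+s\,\tilde\xi(x)$ meets the equality constraints for every $s$; evaluating the inequality residuals $\nabla g_i(x)^\top w(x)+\alpha g_i(x)$ at $x=\bar x$ yields $s\,\nabla g_i(\bar x)^\top\xi<0$ for $i\in I_0(\bar x)$ and $s\,\nabla g_i(\bar x)^\top\xi+\alpha g_i(\bar x)$ with $\alpha g_i(\bar x)<0$ for $i\notin I_0(\bar x)$, so choosing $s>0$ small enough (finitely many indices) makes all residuals strictly negative at $\bar x$, hence strictly negative on a neighborhood $U_{\bar x}$ of $\bar x$ by continuity in $x$. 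Taking $X=\bigcup_{\bar x\in\Cc}U_{\bar x}$ completes the proof. I expect the main obstacle to be exactly this last step — getting the order of quantifiers right (fixing $s$ as a function of $\bar x$ before shrinking the neighborhood) and checking that the continuous selections $w_0,\tilde\xi$ behave as claimed — together with stating carefully the (standard) fact that affine-constrained problems always admit KKT multipliers.
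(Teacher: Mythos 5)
Your proof is correct, but it reaches the conclusion by a genuinely different route than the paper. The paper works directly with the system of affine inequalities defining $K_\alpha(x)$ in~\eqref{eq:Kalpha}: it shows strict feasibility of that system for each $x\in\Cc$ by a two-fold application of Farkas' lemma (MFCQ is used to certify that the alternative system is infeasible), and then invokes Robinson's stability theorem for regular systems of inequalities~\cite{SMR:75} to propagate nonemptiness to an open neighborhood of $\Cc$. You instead (a) reduce nonemptiness of $K_\alpha(x)$ to feasibility of the linearized constraint set of~\eqref{eq:smooth-proj-grad} by projecting $-\nabla f(x)$ onto that polyhedron and reading off the KKT multipliers (valid with no constraint qualification since the constraints are affine), and (b) build an explicit, continuously varying Slater point $w(x)=w_0(x)+s\,P(x)\xi$ from the MFCQ direction, so that strict feasibility at $\bar x\in\Cc$ extends to a neighborhood by plain continuity of the residuals. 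Step (a) is essentially an independent proof of the equivalence the paper only establishes later (Proposition~\ref{prop:equiv} together with Lemma~\ref{lem:necessity-optimality}), so your argument front-loads that duality; step (b) replaces the citation of Robinson's theorem with an elementary constructive perturbation argument, at the cost of having to verify the continuity of the pseudoinverse-based selections. Both quantifier orders are handled correctly ($s$ is fixed at $\bar x$ before the neighborhood is shrunk), and the trivial feasibility of $w=0$ on $\Cc$ plus the MFCQ-based strict margin mirror what the paper's Farkas computation achieves. In short: same conclusion, dual-certificate argument in the paper versus primal construction plus QP duality in yours.
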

\begin{proof}
  We begin by showing that inequalities parameterizing
  ${K_\alpha(x)}$ are strictly feasible for all
  $x \in \Cc$, i.e., {for each $x \in \Cc$, there exists
  $\epsilon > 0$} and $(u, v) \in \real_{\geq 0}^m \times \real^k$ such
  that
  \begin{subequations}
    \label{eq:strictK}
    \begin{align}
      -\pfrac{g}{x}\pfrac{g}{x}^\top u -\pfrac{g}{x}\pfrac{h}{x}^\top v
      &\leq \pfrac{g}{x}\nabla f(x) - \alpha g(x) -\epsilon \ones_m
      \\
      \label{eq:H}
      -\pfrac{h}{x}\pfrac{g}{x}^\top u - \pfrac{h}{x}\pfrac{h}{x}^\top
      v &= \pfrac{h}{x}\nabla f(x) - \alpha h(x).
    \end{align}
  \end{subequations}
  Let $\tilde{g} = g(x) + \frac{\epsilon}{\alpha} \ones_m$.  By
  Farka's Lemma~\cite{RTR:70}, \eqref{eq:strictK} is infeasible if and
  only if there exists a solution $(u, v)$ to
  \begin{subequations}
    \label{eq:motzkin}
    \begin{align}
      \label{eq:motzkin-1}
      -\pfrac{g}{x}\pfrac{g}{x}^\top u -\pfrac{g}{x}\pfrac{h}{x}^\top v &\geq 0 \\
      \label{eq:motzkin-2}
      -\pfrac{h}{x}\pfrac{g}{x}^\top u - \pfrac{h}{x}\pfrac{h}{x}^\top v &= 0 \\
      \label{eq:motzkin-3}
      u &\geq 0 \\
      \label{eq:motzkin-t}
      u^\top \left(\pfrac{g}{x}\nabla f - \alpha \tilde{g} \right) + v^\top \left(\pfrac{h}{x}\nabla f - \alpha h(x)\right) &< 0. 
    \end{align}
  \end{subequations}
  Then \eqref{eq:motzkin-1}, \eqref{eq:motzkin-2}, and \eqref{eq:motzkin-3} imply that 
  \[
    \begin{bmatrix}
      u \\ v
    \end{bmatrix}^\top \begin{bmatrix}
      \pfrac{g}{x}\pfrac{g}{x}^\top & \pfrac{g}{x}\pfrac{h}{x}^\top \\
      \pfrac{h}{x}\pfrac{g}{x}^\top & \pfrac{h}{x}\pfrac{h}{x}^\top
    \end{bmatrix} \begin{bmatrix}
      u \\ v
    \end{bmatrix} \leq 0
  \]
  but, since the matrix is positive semidefinite,
  \begin{equation}
    \label{eq:farka1}
    (u ,v) \in \ker \begin{bmatrix}
      \pfrac{g}{x}\pfrac{g}{x}^\top & \pfrac{g}{x}\pfrac{h}{x}^\top \\
      \pfrac{h}{x}\pfrac{g}{x}^\top & \pfrac{h}{x}\pfrac{h}{x}^\top
    \end{bmatrix} = \ker \begin{bmatrix}
      \pfrac{g}{x}^\top & \pfrac{h}{x}^\top
    \end{bmatrix}.
  \end{equation}
  Next, by {$\eqref{eq:farka1}$ and that $x \in \Cc$}, \eqref{eq:motzkin-t} reduces to 
  \begin{equation}
    \label{eq:farka2}
    -u^\top(\alpha g(x) + \epsilon \ones_m) < 0,
  \end{equation}  
  and by a second application of Farka's Lemma, we see that \eqref{eq:motzkin-3}, 
  \eqref{eq:farka1} and \eqref{eq:farka2} are feasible if and only if the following 
  system is infeasible.
  \begin{subequations}
    \label{eq:farka-alternative}
    \begin{align}
      \pfrac{g(x)}{x}\xi &\leq -\alpha g(x) - \epsilon \ones_m \qquad \pfrac{h(x)}{x}\xi = 0.
    \end{align}
  \end{subequations}
  We claim that a solution to \eqref{eq:farka-alternative} can be
  constructed if MFCQ holds at $x$. Indeed, by MFCQ, there exists
  $\tilde{\xi} \in \real^n$ such that
  $\pfrac{g_{I_0}}{x}\tilde{\xi} < 0$ and
  $\pfrac{h}{x}\tilde{\xi} = 0$, and for $\epsilon$ sufficiently
  small, there exists $\gamma > 0$ such that
  $\xi = \gamma \tilde{\xi}$ solves \eqref{eq:farka-alternative}.
  Thus \eqref{eq:motzkin} is infeasible, and therefore
  \eqref{eq:strictK} is feasible.

  By strict feasibility of \eqref{eq:strictK} and 
  the fact that the matrix $\pfrac{h}{x}\pfrac{h}{x}^\top$ has full rank, 
  it can be shown by Lemma~\ref{lem:double}
  that, for all $x \in \Cc$,  
  the affine inequalities that parameterize ${K_\alpha(x)}$ are 
  \emph{regular}\footnote{Consider a linear system of inequalities of the form 
  $Cz \leq c$, $Dz = d$, 
  and a solution $z_0$. The system is regular (c.f. \cite{SMR:75}) 
  if for $C', c', D', d'$ sufficiently close to $C, c, D, d$, the 
  perturbed system $C'z \leq c'$, $D'z= d'$ remains feasible, and 
  the distance of $z_0$ to the solution set of the perturbed system is proportional to the 
  magnitude of the perturbation.}.
  Finally, since the affine inequalities parameterizing $K_\alpha$ are continuous,
  ${K_\alpha(y)}$ is nonempty
  for any $y$ sufficiently close to $x$. Hence 
  there exists an open set~$X$ such that
  $K_\alpha$ takes nonempty values on~$X$.
\end{proof}

Since $\phi$ is a VCBF, we
can design a feedback of the form~\eqref{eq:qp-controller} to maintain
safety of $\Cc$ while modifying the drift term as little as possible.
Formally,
\begin{equation}\label{eq:feedback}
  \begin{bmatrix}
    u(x) \\ v(x)
  \end{bmatrix} \in \underset{(u, v)\in K_\alpha(x)}{\text{argmin}}\left\{
    \normB{\pfrac{g(x)}{x}^\top u + \pfrac{h(x)}{x}^\top v}^2 \right\}.
\end{equation}
We refer to the closed-loop system~\eqref{eq:augment} under the
controller~\eqref{eq:feedback} as the \emph{safe gradient flow}. In
general, the solution to \eqref{eq:feedback} might not be
unique. Nevertheless, as we show later, the safe gradient flow is
well-defined because, the closed-loop behavior of the system is
independent of the chosen solution. 

Comparing \eqref{eq:augment} with the KKT equation~\eqref{eq:KKT_opt1}
suggests that $(u(x), v(x))$ can be interpreted as approximations of the
dual variables of the problem. With this interpretation, the
safe gradient flow can be viewed as a primal-dual method. We use this
viewpoint later to establish connections between the proposed method and the
projected gradient flow. 


\begin{remark}[Connection with the
    Literature]\label{rem:connection-lit}
  {\rm The work~\cite{KT:80} considers the problem of designing a
    dynamical system to solve~\eqref{eq:opt} when only equality
    constraints are present using a differential geometric
    approach. 
    Here, we show that the safe gradient
    flow generalizes the solution proposed in~\cite{KT:80}.  Under the
    assumption that $h \in C^r$ and LICQ holds, the feasible set
    $\Cc = \{x \in \real^n \mid h(x) = 0 \}$ is an embedded $C^r$
    submanifold of $\real^n$ of codimension $k$.
    The approach in~\cite{KT:80} proceeds by identifying a vector
    field $F:\real^n \to \real^n$ satisfying: (i)
    $F \in C^r$ and $F(x) \in T_\Cc(x)$ for all $x \in \Cc$; and
    (ii) $\dot{h}(x) = -\alpha h(x)$ along the trajectories of $\dot x
    = F(x)$, where $\alpha > 0$ is a design parameter.  The proposed
    vector field satisfying both properties~is
    \begin{equation}\label{eq:tanabe}
      F(x) = -\Big(I - \pfrac{h}{x}^\dagger
      \pfrac{h}{x}\Big)\nabla f(x) - \alpha \pfrac{h}{x}^\dagger
      h(x).
    \end{equation}
    To see that this corresponds to the safe gradient flow, note that
    the admissible control set~\eqref{eq:Kalpha} in this case is
    \begin{align*}
      K_\alpha(x) =\Big\{v \in \real^k \mid -\pfrac{h}{x}\nabla f(x)
      -\pfrac{h}{x}\pfrac{h}{x}^\top v = -\alpha h(x) \Big\}.
    \end{align*}
    By the LICQ assumption, ${K_\alpha(x)}$ is a singleton whose unique element
    is
    \begin{align*}
      v(x) = -\Big( \pfrac{h}{x}\pfrac{h}{x}^\top
      \Big)^{-1}\Big(\pfrac{h}{x}\nabla f(x) - \alpha h(x) \Big).
    \end{align*}
    Substituting this into~\eqref{eq:augment}, we obtain the
    expression~\eqref{eq:tanabe}.  This provides an alternative
    interpretation from a control-theoretic perspective of the
    differential-geometric design in~\cite{KT:80}, and justifies
    viewing the safe gradient flow as the natural extension to the
    case with both inequality and equality constraints.  \oprocend }
\end{remark}

\begin{remark}[Inequality Constraints via Quadratic Slack
    Variables]\label{rem:quadratic-slack}
  {\rm The work~\cite{JS-IS:00} pursues a different approach that the
    one taken here to deal with inequality constraints by reducing
    them to equality constraints. This is accomplished introducing
    quadratic slack variables. Formally, for each $i \in \until{m}$,
    one replaces the constraint $g_i(x) \leq 0$
    with the equality constraint $g_i(x) = -y_i^2$, and solves the
    equality-constrained optimization problem in the variables $(x, y)
    \in \real^{n + m}$ with a flow of the
    form~\eqref{eq:tanabe}. While this method can be expressed in
    closed form, there are several drawbacks with it. First, this
    increases the dimensionality of the problem, which can be
    problematic when there are a large number of inequality
    constraints. Second, adding quadratic slack variables introduces
    equilibrium points to the resulting flow which do not correspond
    to KKT points of the original problem.  } \oprocend
\end{remark}

\subsection{Safe Gradient Flow as an Approximation of the Projected
  Gradient Flow}

Here, we introduce an alternative design in terms of a continuous
approximation of the projected gradient flow. The latter is a
discontinuous dynamical system obtained by projecting the gradient of
the objective function onto the tangent cone of the feasible
set. Later, we show that this continuous approximation is in fact
equivalent to the safe gradient flow.

Let $x \in \Cc$ and suppose that MFCQ holds at $x$. Then the tangent
cone of $\Cc$ at $x$ is
\[ 
T_\Cc(x) = \Big\{ \xi \in \real^n \left| \pfrac{h(x)}{x}\xi = 0,
    \pfrac{g_{I_0}(x)}{x}\xi \leq 0\right. \Big\} .
\]
For $x \in \Cc$, let $\Pi_{T_\Cc(x)}$ be the projection onto
$T_\Cc(x)$. In general, the projection is a set-valued map, but the
fact that $T_\Cc(x)$ is closed and convex makes the projection onto
$T_\Cc(x)$ unique in this case. The projected gradient flow is
\begin{equation}
  \label{eq:proj-grad}
  \begin{aligned}
    \dot{x} &= \Pi_{T_\Cc(x)}(-\nabla{f}(x))
    \\
    &= \underset{\xi \in \real^n}{\text{argmin}} \quad 
    \frac{1}{2}\norm{\xi + \nabla f(x)}^2
    \\
    & \quad \; \text{subject to} \quad \pfrac{g_{I_0}(x)}{x}\xi \leq 0,
    \pfrac{h(x)}{x}\xi = 0.
  \end{aligned}
\end{equation}
In general, this system is discontinuous, so one must resort to
appropriate notions of solution trajectories and establish their
existence, see e.g.,~\cite{JC:08-csm-yo}. Here, we consider
Carath\'eodory solutions, which are absolutely continuous functions
that satisfy~\eqref{eq:proj-grad} almost everywhere.
When Carath\'eodory solutions exist in $\Cc$, then the KKT 
points of~\eqref{eq:opt} are equilibria
of~\eqref{eq:proj-grad}, and isolated local minimizers are asymptotically 
stable. 

Consider the following continuous approximation
of~\eqref{eq:proj-grad} by letting $\alpha > 0$ and defining
$\Gc_{\alpha}$ by
\begin{equation}
  \label{eq:smooth-proj-grad}
  \begin{aligned} 
    \Gc_{\alpha}(x) = \;&\underset{\xi \in \real^n}{\text{argmin}}
    &&\frac{1}{2}\norm{\xi + \nabla f(x)}^2
    \\
    &\text{subject to} && \pfrac{g(x)}{x}\xi \leq -\alpha g(x) \\
    &&& \pfrac{h(x)}{x}\xi = -\alpha h(x) .
  \end{aligned}
\end{equation}
Note that \eqref{eq:smooth-proj-grad} has a similar form
to~\eqref{eq:proj-grad}, and has a unique solution if one exists.
However, as we show later, unlike the projected gradient
flow, the vector field $\Gc_{\alpha}$ is well defined outside $ \Cc$ and
is Lipschitz.

We now show that $\Gc_{\alpha}$ approximates the projected gradient flow.
Intuitively, this is because for
$j \notin I_0(x)$, one has $g_j(x) < 0$ and hence the $j$th inequality
constraint in~\eqref{eq:smooth-proj-grad},
$\nabla g_j(x)^\top \xi \leq -\alpha g_j(x)$, becomes
$\nabla g_j(x)^\top \xi \leq \infty$ as $\alpha \to \infty$ and the
constraint is effectively removed, reducing the problem
to~\eqref{eq:proj-grad}. This is formalized next.

\begin{proposition}[$\Gc_{\alpha}$ approximates the projected
  gradient]\label{prop:approx}
  Let $x \in \Cc$ and suppose MFCQ holds. Then
  \begin{enumerate}
  \item $\Gc_{\alpha}(x) \in T_{\Cc}(x)$. 
  \item 
  $ \lim_{\alpha \to \infty}\Gc_{\alpha}(x) = \Pi_{T_\Cc(x)}(-\nabla
    f(x))$. 
  \end{enumerate}
\end{proposition}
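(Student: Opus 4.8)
The plan is to prove (i) first and then use it to establish (ii). For (i), I would argue directly from the defining constraints of $\Gc_\alpha(x)$: since $x \in \Cc$, we have $g(x) \leq 0$ and $h(x) = 0$, so the feasibility constraints $\pfrac{g(x)}{x}\xi \leq -\alpha g(x)$ and $\pfrac{h(x)}{x}\xi = -\alpha h(x)$ for $\xi = \Gc_\alpha(x)$ give $\pfrac{h(x)}{x}\xi = 0$ and, for active indices $i \in I_0(x)$ where $g_i(x) = 0$, $\nabla g_i(x)^\top \xi \leq 0$. These are exactly the defining conditions of $T_\Cc(x)$ under MFCQ (as stated in the excerpt), hence $\Gc_\alpha(x) \in T_\Cc(x)$.

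For (ii), the key observation is that both $\Gc_\alpha(x)$ and $\Pi_{T_\Cc(x)}(-\nabla f(x))$ are minimizers of the strictly convex objective $\frac{1}{2}\norm{\xi + \nabla f(x)}^2$ over nested feasible sets, and I want to show the minimizer over the $\Gc_\alpha$-feasible set converges to the minimizer over $T_\Cc(x)$. First I would note that (i) implies $\Gc_\alpha(x) \in T_\Cc(x)$ for every $\alpha$, so the objective value at $\Gc_\alpha(x)$ is at least the minimum over $T_\Cc(x)$, i.e.\ $\norm{\Gc_\alpha(x) + \nabla f(x)} \geq \norm{\Pi_{T_\Cc(x)}(-\nabla f(x)) + \nabla f(x)}$. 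For the reverse direction, I would construct, for large $\alpha$, a feasible point for problem~\eqref{eq:smooth-proj-grad} that is close to $\xi^\star := \Pi_{T_\Cc(x)}(-\nabla f(x))$: take $\xi_\alpha = \xi^\star + s_\alpha \tilde\xi$ where $\tilde\xi$ is the MFCQ direction (so $\pfrac{h}{x}\tilde\xi = 0$ and $\nabla g_i(x)^\top \tilde\xi < 0$ on $I_0(x)$) and $s_\alpha \to 0$ is chosen so that the inactive constraints $\nabla g_j(x)^\top \xi_\alpha \leq -\alpha g_j(x)$ (which have strictly positive right-hand side $-\alpha g_j(x) > 0$ and grow without bound) are eventually satisfied, while the active constraints stay satisfied because $\nabla g_i(x)^\top \xi^\star \leq 0$ and adding $s_\alpha \tilde\xi$ only decreases them. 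The equality constraint holds since both $\xi^\star$ and $\tilde\xi$ lie in $\ker \pfrac{h(x)}{x}$. Then $\norm{\Gc_\alpha(x) + \nabla f(x)} \leq \norm{\xi_\alpha + \nabla f(x)} \to \norm{\xi^\star + \nabla f(x)}$ as $\alpha \to \infty$. Combining the two bounds, the objective values converge, and since the feasible sets all contain $T_\Cc(x)$ and the objective is strictly convex with a unique minimizer over $T_\Cc(x)$, a standard argument (boundedness of $\{\Gc_\alpha(x)\}$, extracting a convergent subsequence, and uniqueness of the limit) forces $\Gc_\alpha(x) \to \xi^\star$.

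I expect the main obstacle to be making the perturbation argument fully rigorous: one must verify that $\{\Gc_\alpha(x)\}_{\alpha}$ is bounded (which follows since the objective value is bounded, forcing $\norm{\Gc_\alpha(x) + \nabla f(x)}$ bounded, hence $\norm{\Gc_\alpha(x)}$ bounded), and then argue that every subsequential limit is feasible for $T_\Cc(x)$ and achieves the minimum value there, so must equal the unique minimizer $\xi^\star$. A subtle point is ensuring the constructed $\xi_\alpha$ is genuinely feasible for \emph{all} constraints simultaneously for $\alpha$ large enough — the choice of $s_\alpha$ must balance the inactive constraints (needs $s_\alpha$ not too small relative to how fast $\alpha g_j(x) \to -\infty$, but here the right-hand side is helping us so any $s_\alpha \to 0$ slowly enough works, e.g.\ one can even take $s_\alpha$ fixed small and check the inactive constraints are satisfied for $\alpha$ past a threshold) against the active ones (needs $s_\alpha \geq 0$, which is automatic). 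This bookkeeping is routine but needs care.
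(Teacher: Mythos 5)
Your part (i) is exactly the paper's argument. For part (ii) you take a genuinely different, more elementary route: the paper rewrites $\Gc_\alpha(x)$ as the value $P_x(1/\alpha)$ of a parametric quadratic program in which the inactive constraints are scaled by $\epsilon = 1/\alpha$, observes $P_x(0)=\Pi_{T_\Cc(x)}(-\nabla f(x))$, and invokes a continuity theorem for parametric QPs (Best and Ding) at $\epsilon=0$. You instead sandwich the optimal values directly: the lower bound comes from part (i) (since $\Gc_\alpha(x)\in T_\Cc(x)$, its objective value dominates that of the projection $\xi^\star$), and the upper bound from exhibiting, for large $\alpha$, a feasible point of~\eqref{eq:smooth-proj-grad} near $\xi^\star$; then boundedness, closedness of $T_\Cc(x)$, and uniqueness of the projection force every subsequential limit to equal $\xi^\star$. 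This is correct and self-contained, avoiding the external regularity hypotheses the paper must verify to apply the cited theorem. Two small remarks: your perturbation by the MFCQ direction $\tilde\xi$ is not actually needed for the upper bound, since $\xi^\star$ itself is feasible for~\eqref{eq:smooth-proj-grad} once $\alpha$ exceeds a threshold (the active and equality constraints hold because $\xi^\star\in T_\Cc(x)$ and $g_{I_0}(x)=0$, $h(x)=0$, while the inactive constraints have right-hand side $-\alpha g_j(x)\to+\infty$ against a fixed left-hand side); and you can bypass the subsequence extraction entirely by using the variational inequality $\langle \xi^\star+\nabla f(x),\,\Gc_\alpha(x)-\xi^\star\rangle\ge 0$ characterizing the projection onto the convex cone $T_\Cc(x)$, which yields $\norm{\Gc_\alpha(x)-\xi^\star}^2\le \norm{\Gc_\alpha(x)+\nabla f(x)}^2-\norm{\xi^\star+\nabla f(x)}^2\to 0$ directly.
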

\begin{proof}
  To show (i), note that if $x \in \Cc$, then $h(x) = 0$ and
  $g_{I_0}(x) = 0$, so the constraints in \eqref{eq:smooth-proj-grad}
  imply~$\pfrac{h(x)}{x}\Gc_{\alpha}(x) = 0$ and
  $\pfrac{g_{I_0}(x)}{x}\Gc_{\alpha}(x) \leq 0$, and therefore
  $\Gc_{\alpha}(x) \in T_{\Cc}(x)$.  
  
  Regarding (ii), for fixed $x \in
  \Cc$, let $J = I_-(x)$ and consider the following quadratic program
  \begin{equation}
    \label{eq:P}
    \begin{aligned} 
      P_x(\epsilon) = \;&\underset{\xi \in
        \real^n}{\textnormal{argmin}} &&\frac{1}{2}\norm{\xi +
        \nabla f(x)}^2 
      \\ 
      &\textnormal{subject to} && \pfrac{g_{I_0}(x)}{x}\xi \leq 0,
      \pfrac{h(x)}{x}\xi = 0
      \\
      &&& \epsilon\pfrac{g_J(x)}{x}\xi \leq -g_J(x). 
      \\
    \end{aligned}
  \end{equation}
  When $\epsilon = 0$, the feasible sets of \eqref{eq:P} and
  \eqref{eq:proj-grad} are the same. Since the objective functions are
  also the same, $P_x(0) = \Pi_{T_\Cc(x)}(-\nabla f(x))$.
  Furthermore, for all $\alpha > 0$,
  $P_x(\frac{1}{\alpha}) = \Gc_{\alpha}(x)$. Finally, since the QP
  defining $P_x$ has a unique solution, and satisfies the regularity
  conditions in \cite[Definition~2.1]{MJB-BD:95}, $P_x$ is continuous
  at $\epsilon = 0$ by \cite[Thm.~2.2]{MJB-BD:95}.  Hence
  \[ \lim_{\alpha \to \infty}\Gc_{\alpha}(x) = \lim_{\epsilon \to
    0^+}P_x(\epsilon) = P_x(0) = \Pi_{T_\Cc(x)}(-\nabla f(x)). \]
\end{proof}

\begin{figure}[!!t]
  \centering
    \includegraphics[width=0.6\linewidth]{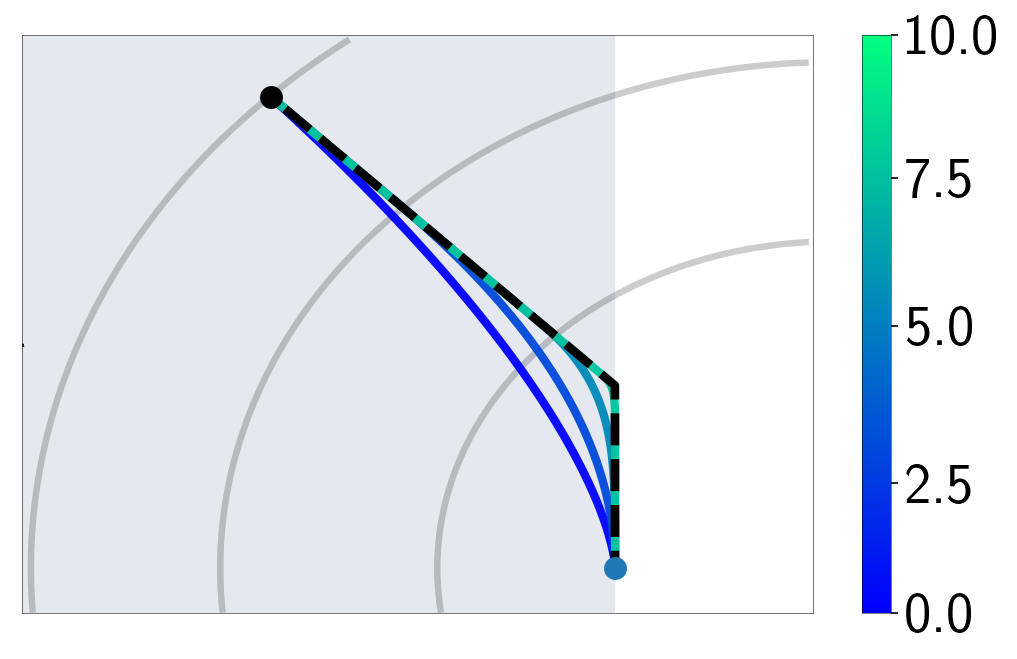}
    \caption{Projected gradient flow versus continuous
      approximation. The solution of the projected gradient flow is in
      black and solutions of $\dot{x} = \Gc_\alpha(x)$ for varying
      values of $\alpha$ are in the colors corresponding to the
      colorbar.  All solutions start from the same initial condition,
      marked by the black dot.}
  \label{fig:alphas}
  \vspace*{-2ex}
\end{figure}

A consequence of Proposition \ref{prop:approx} is that solutions of
$\dot{x} = \Gc_\alpha(x)$ approximate the solutions of the projected
gradient flow, with decreasing error as $\alpha$ increases, cf.
Figure~\ref{fig:alphas}.

\subsection{Equivalence Between the Two
  Interpretations}\label{sec:equivalence}

Here we establish the equivalence between the two interpretations of
the safe gradient flow.  Specifically, we show that the control
barrier function quadratic program~\eqref{eq:feedback} can be
interpreted as a dual program corresponding to the continuous
approximation of the projected gradient flow
in~\eqref{eq:smooth-proj-grad}.
The key to establishing the relationship between the continuous
approximation of the projected gradient flow and the safe feedback
controller in \eqref{eq:feedback} are the Lagrange multipliers of
the problem in~\eqref{eq:smooth-proj-grad}.

Let
$L:\real^n \times \real^m_{\geq 0} \times \real^k \times \real^n \to
\real$ be 
\[
\begin{aligned}
  & L(\xi, u, v; x) = \frac{1}{2}\norm{\xi + \nabla
    f(x)}^2
  \\
  & \qquad + u^\top \Big(\pfrac{g(x)}{x}\xi + \alpha g(x)\Big) +
  v^\top \Big(\pfrac{h(x)}{x}\xi +\alpha h(x)\Big).
\end{aligned}
\]
Then for each $x \in \real^n$, the Lagrangian of the optimization 
problem \eqref{eq:smooth-proj-grad}
is the function $(\xi, u, v) \mapsto L(\xi, u, v; x)$. 

For each $x\in \real^n$, the KKT conditions corresponding to 
the optimization \eqref{eq:smooth-proj-grad} are
\begin{subequations}
  \label{eq:KKT_proj}
  \begin{align}
    \label{eq:dlagrangian}
    \xi + \nabla f(x) + \pfrac{g(x)}{x}^\top u + \pfrac{h(x)}{x}^\top v &= 0 \\
    \label{eq:pinequality}
    \pfrac{g(x)}{x}\xi + \alpha g(x) &\leq 0 \\
    \label{eq:pequality}
    \pfrac{h(x)}{x}\xi + \alpha h(x) &= 0 \\
    \label{eq:nonnegativity}
    u &\geq 0 \\
    \label{eq:complementarity}
    u^\top \left(\pfrac{g(x)}{x}\xi + \alpha g(x)\right) &= 0
  \end{align}
\end{subequations}
Because the \eqref{eq:smooth-proj-grad} is strongly convex, the existence of a triple
$(\xi, u, v)$ satisfying \eqref{eq:KKT_proj} is sufficient for
optimality of $\xi$. Since the optimizer is unique, for any triple
$(\xi, u, v)$ satisfying these conditions, $\xi = \Gc_\alpha(x)$. 

Let ${\Lambda_\alpha}:\real^n \rightrightarrows \real^{m}_{\geq 0}\times \real^{k}$ be defined by
\begin{equation}
  \label{eq:lagrange-mult-set}
  \begin{aligned}
    \Lambda_\alpha(x) = \{ (u, v) \in \real^{m}_{\geq 0} \times \real^{k}
    \mid &\exists\xi \in \real^n \text{ such that } \\
    &(\xi, u, v) \text{ solves $\eqref{eq:KKT_proj}$} \}.
  \end{aligned}
\end{equation}
By definition, ${\Lambda_\alpha(x)}$ is the set of Lagrange multipliers
of~\eqref{eq:smooth-proj-grad} at $x \in \real^n$. When
${\Lambda_\alpha(x)} \neq \emptyset$, then the conditions \eqref{eq:KKT_proj}
are also necessary for optimality of \eqref{eq:smooth-proj-grad}. As
we show next, this necessity follows as a consequence of the
constraint qualification conditions.

\begin{lemma}[Necessity of optimality conditions]
  \label{lem:necessity-optimality}
  For $\alpha > 0$, if \eqref{eq:opt} satisfies MFCQ at $x \in \Cc$
  then there is an open set $U$
  containing $x$ such that $\Lambda_\alpha(x') \neq \emptyset$ for all
  $x' \in U$.
\end{lemma}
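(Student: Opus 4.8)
The plan is to reduce the statement to the feasibility of the quadratic program \eqref{eq:smooth-proj-grad}. First I would note that $\Lambda_\alpha(x')$ is precisely the set of dual optimal solutions of the strongly convex QP \eqref{eq:smooth-proj-grad} evaluated at the parameter $x'$: since its objective $\tfrac{1}{2}\norm{\xi + \nabla f(x')}^2$ is radially unbounded in $\xi$, this QP attains its (unique) minimizer $\Gc_\alpha(x')$ whenever its feasible set
\[
F(x') := \Big\{\xi \in \real^n \ \Big|\ \pfrac{g(x')}{x}\xi + \alpha g(x') \le 0,\ \pfrac{h(x')}{x}\xi + \alpha h(x') = 0 \Big\}
\]
is nonempty; and because the constraints cutting out $F(x')$ are affine in $\xi$, the KKT system \eqref{eq:KKT_proj} is necessary at $\Gc_\alpha(x')$ without any further constraint qualification. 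Hence $\Lambda_\alpha(x') \neq \emptyset$ if and only if $F(x') \neq \emptyset$, and it suffices to prove $F(x') \neq \emptyset$ for all $x'$ in a neighborhood of $x$ (intersected with $\Cc$ in the MFCQ case).

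For the MFCQ case the argument is immediate: for any $x' \in \Cc$ we have $g(x') \le 0$ and $h(x') = 0$, so $\xi = 0$ belongs to $F(x')$. Thus $F(x')$ is nonempty for every $x' \in \Cc$ and one may take $U = \real^n$; MFCQ enters only for consistency with the companion statement.

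For the EMFCQ case I would mimic the theorem-of-alternatives argument in the proof of Lemma~\ref{lemma:feedback-feasibility}. By Motzkin's transposition theorem, $F(x)$ is empty exactly when there exists $(\lambda,\mu) \in \real_{\ge 0}^m \times \real^k$, not both zero, with $\pfrac{g(x)}{x}^\top\lambda + \pfrac{h(x)}{x}^\top\mu = 0$ and $g(x)^\top\lambda + h(x)^\top\mu > 0$. I would rule this out: pairing the first identity with the EMFCQ direction $\xi$ (for which $\pfrac{h(x)}{x}\xi = 0$ and $\nabla g_i(x)^\top\xi < 0$ for $i \in I_0(x)\cup I_+(x)$) and using $\lambda \ge 0$ shows that the multipliers of the active and violated constraints must vanish; linear independence of $\{\nabla h_j(x)\}_{j=1}^k$ then constrains $\mu$ from the remaining data so that, together with the (standing) nonemptiness of $\Cc$, the strict inequality $g(x)^\top\lambda + h(x)^\top\mu > 0$ cannot hold. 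Having established $F(x) \neq \emptyset$, the affine inequalities and equalities defining $F(x)$ are regular in the sense of \cite{SMR:75}, and since their data $\big(\pfrac{g(x')}{x},\, g(x'),\, \pfrac{h(x')}{x},\, h(x')\big)$ varies continuously with $x'$, the system stays feasible on an open neighborhood $U$ of $x$, exactly as at the end of the proof of Lemma~\ref{lemma:feedback-feasibility}.

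The step I expect to be the crux is ruling out the Motzkin alternative in the EMFCQ case: pairing with the EMFCQ direction controls only the multipliers of the active and violated constraints, leaving a residual contribution from the inactive constraints $i \in I_-(x)$ and from $\mu$, so closing the argument requires carefully using linear independence of the equality-constraint gradients (to eliminate $\mu$) together with the nonemptiness of $\Cc$. Everything else — the reduction to feasibility of \eqref{eq:smooth-proj-grad}, the trivial feasible-case bound $0 \in F(x')$, and the perturbation step via \cite{SMR:75} — is routine.
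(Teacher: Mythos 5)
Your reduction of $\Lambda_\alpha(x')\neq\emptyset$ to feasibility of the constraint set of \eqref{eq:smooth-proj-grad} is correct (strong convexity gives attainment, and affinity of the constraints in $\xi$ makes the KKT system necessary with no further qualification), and your MFCQ half is correct and genuinely simpler than the paper's: for $x'\in\Cc$ the point $\xi=0$ is feasible, so $\Lambda_\alpha(x')\neq\emptyset$ on all of $\Cc$ with no constraint qualification whatsoever. The paper instead builds a Slater point $\tilde{\xi}=\epsilon\xi$ from the MFCQ direction and invokes Robinson's stability theorem \cite{SMR:75}; what that buys, and what your $\xi=0$ argument does not, is strict feasibility of the linearized inequalities at $x$ and hence nonemptiness of $\Lambda_\alpha$ on a \emph{full} neighborhood $U$ of $x$ rather than only on $U\cap\Cc$ --- a property the paper reuses later (e.g., in Proposition~\ref{prop:Lipschitz}). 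For the literal statement of the lemma, your argument suffices.

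The EMFCQ half is a genuine gap, and you have correctly located it: pairing the Motzkin certificate with the EMFCQ direction controls only the multipliers indexed by $I_0(x)\cup I_+(x)$, and when $h(x)\neq 0$ you cannot rescale that direction to suppress the residual from $I_-(x)$ without destroying the equality $\pfrac{h(x)}{x}\xi=-\alpha h(x)$. This obstruction is not a technicality that linear independence of $\{\nabla h_j(x)\}$ plus nonemptiness of $\Cc$ can remove. Take $n=1$, $h(y)=y$, $g(y)=-(y-1)^2$ (so $\Cc=\{0\}$ and MFCQ holds there), and $x=2$: EMFCQ holds at $x$ since $I_0(2)\cup I_+(2)=\emptyset$ and $\nabla h(2)=1$, yet the equality constraint of \eqref{eq:smooth-proj-grad} forces $\xi=-2\alpha$ while the inequality $\nabla g(2)\,\xi\leq-\alpha g(2)$ reads $4\alpha\leq\alpha$; the constraint set is empty and $\Lambda_\alpha(2)=\emptyset$. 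So the Motzkin alternative you aim to exclude can actually occur under the stated hypotheses, and no choice of bookkeeping with $\mu$ and the inactive multipliers will close the argument; some additional control on the inactive constraints (or on $h(x)$ and $g_{I_-}(x)$) is indispensable. For what it is worth, the paper's own treatment of this case is the single sentence that the reasoning is ``identical'' to the MFCQ case with $I=I_0(x)\cup I_+(x)$, and it runs into the same difficulty: the scaled direction $\epsilon\xi$ no longer satisfies the equality constraint when $h(x)\neq0$.
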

\begin{proof}
  If MFCQ holds at $x \in \Cc$, there exists
  $\xi \in \real^n$ such that $\nabla g_i(x)^\top \xi < 0$ 
  for all $i \in I_0(x)$ and $\nabla h_j(x)^\top \xi = 0$ for all $j \in \until{k}$. 
  Next, for every $j \in I_-(x)$, let $\epsilon_j > 0$ be defined as 
  \[
    \begin{aligned}
      \epsilon_j \leq
      \begin{cases}
        \frac{-\alpha g_j(x)}{\nabla g_j(x)^\top \xi} \qquad &\nabla
        g_j(x)^\top \xi > 0,
        \\
        1 \qquad &\nabla g_j(x)^\top \xi \leq 0.
    \end{cases}
  \end{aligned}
 \] 
 Then taking $0 < \epsilon \leq \min_{j \in I_-(x)} \{ \epsilon_j \}$
 and $\tilde{\xi} = \epsilon \xi$, 
 satisfies
 \begin{equation}
   \label{eq:slater}
   \pfrac{g(x)}{x} \tilde{\xi} < -\alpha g(x) \qquad \pfrac{h(x)}{x}
   \tilde{\xi} = -\alpha h(x). 
 \end{equation}
 The previous expression means that the constraints of~\eqref{eq:smooth-proj-grad}
 satisfy Slater's condition \cite[Ch.~5.2.3]{SB-LV:09} at $x$, so
 the affine constraints are \emph{regular}
 \cite[Thm. 2]{SMR:75}. This implies that there exists an open set
 $U$ containing $x$ on which~\eqref{eq:smooth-proj-grad} is feasible
 and $\Lambda_\alpha(x') \neq \emptyset$ for all $x' \in U$. 
\end{proof}

We use the optimality conditions to show that \eqref{eq:feedback} is
actually the dual problem corresponding to \eqref{eq:smooth-proj-grad}
in the appropriate sense.

\begin{proposition}[{Equivalence
      of two constructions of the safe gradient flow}]\label{prop:equiv}
  If ${\Lambda_\alpha(x)} \! \neq \! \emptyset$,
  \begin{enumerate}
  \item If $(u, v) \in \Lambda_\alpha(x)$, then $(u, v)$ solves
    \eqref{eq:feedback};
  \item ${\Gc}_\alpha$ is the closed-loop dynamics corresponding to the
    implementation of \eqref{eq:feedback} over \eqref{eq:augment}.
 \end{enumerate}
\end{proposition}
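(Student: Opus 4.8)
The plan is to turn the affine substitution
\[
  \xi_{u,v} \;:=\; -\nabla f(x) - \pfrac{g(x)}{x}^\top u - \pfrac{h(x)}{x}^\top v
\]
into a dictionary between the admissible set $K_\alpha(x)$ of~\eqref{eq:Kalpha} and the primal QP~\eqref{eq:smooth-proj-grad}. Two elementary facts drive everything. First, rearranging the defining inequality of $K_\alpha(x)$ gives $\pfrac{g}{x}\xi_{u,v} \le -\alpha g(x)$, and rearranging the equality gives $\pfrac{h}{x}\xi_{u,v} = -\alpha h(x)$; hence $(u,v)\in K_\alpha(x)$ if and only if $u\ge 0$ and $\xi_{u,v}$ is feasible for~\eqref{eq:smooth-proj-grad}. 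Second, straight from the definition, $\xi_{u,v}+\nabla f(x) = -\big(\pfrac{g}{x}^\top u + \pfrac{h}{x}^\top v\big)$, so the objective of~\eqref{eq:feedback} at $(u,v)$ equals $\norm{\xi_{u,v}+\nabla f(x)}^2$, i.e.\ twice the objective of~\eqref{eq:smooth-proj-grad} at $\xi_{u,v}$. Recall also that, by strong convexity of~\eqref{eq:smooth-proj-grad}, its minimizer is unique and the KKT conditions~\eqref{eq:KKT_proj} are sufficient for optimality, so whenever $(\xi,u,v)$ solves~\eqref{eq:KKT_proj} one has $\xi = \Gc_\alpha(x)$.

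For part~(i), let $(u,v)\in\Lambda_\alpha(x)$, so there is $\xi$ with $(\xi,u,v)$ solving~\eqref{eq:KKT_proj}. Stationarity~\eqref{eq:dlagrangian} yields $\xi=\xi_{u,v}$; then primal feasibility~\eqref{eq:pinequality}--\eqref{eq:pequality} together with $u\ge 0$ give, via the rearrangement above, $(u,v)\in K_\alpha(x)$, and $\xi_{u,v}=\Gc_\alpha(x)$. Now take any competitor $(u',v')\in K_\alpha(x)$: then $\xi_{u',v'}$ is feasible for~\eqref{eq:smooth-proj-grad}, hence $\norm{\xi_{u',v'}+\nabla f(x)}^2 \ge \norm{\Gc_\alpha(x)+\nabla f(x)}^2$, which by the norm identity is exactly $\normB{\pfrac{g}{x}^\top u' + \pfrac{h}{x}^\top v'}^2 \ge \normB{\pfrac{g}{x}^\top u + \pfrac{h}{x}^\top v}^2$. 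Thus $(u,v)$ minimizes~\eqref{eq:feedback}. As a byproduct, the minimum of~\eqref{eq:feedback} is attained and equals $\norm{\Gc_\alpha(x)+\nabla f(x)}^2$.

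For part~(ii), let $(u^\star,v^\star)$ be any minimizer of~\eqref{eq:feedback} (one exists by the byproduct of part~(i), since $\Lambda_\alpha(x)\subseteq K_\alpha(x)\ne\emptyset$). Being in $K_\alpha(x)$, the point $\xi_{u^\star,v^\star}$ is feasible for~\eqref{eq:smooth-proj-grad}, and $\norm{\xi_{u^\star,v^\star}+\nabla f(x)}^2 = \normB{\pfrac{g}{x}^\top u^\star + \pfrac{h}{x}^\top v^\star}^2$ equals the optimal value of~\eqref{eq:feedback}, which is $\norm{\Gc_\alpha(x)+\nabla f(x)}^2$. So $\xi_{u^\star,v^\star}$ is feasible for~\eqref{eq:smooth-proj-grad} and attains its optimal value; by uniqueness of the minimizer, $\xi_{u^\star,v^\star}=\Gc_\alpha(x)$. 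Since $\xi_{u^\star,v^\star}$ is precisely the right-hand side of~\eqref{eq:augment} evaluated under the feedback $(u^\star,v^\star)$, the closed-loop vector field coincides with $\Gc_\alpha$, independently of which minimizer of~\eqref{eq:feedback} is selected.

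The only delicate point is the bookkeeping in the first paragraph: one must verify that moving the $\pfrac{g}{x}\nabla f(x)$ and $\pfrac{h}{x}\nabla f(x)$ terms across and factoring $\pfrac{g}{x}$, $\pfrac{h}{x}$ really reproduces the feasibility constraints of~\eqref{eq:smooth-proj-grad} exactly, and one must keep straight which squared norm equals which. Conceptually there is no obstacle: $K_\alpha(x)$ is the image of the primal-feasible set under the affine map $\xi\mapsto(u,v)$, so~\eqref{eq:feedback} is just~\eqref{eq:smooth-proj-grad} rewritten in the dual coordinates, and strong convexity (uniqueness of $\Gc_\alpha(x)$) plus sufficiency of~\eqref{eq:KKT_proj} closes both parts.
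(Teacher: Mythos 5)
Your proposal is correct and follows essentially the same route as the paper's proof: both rest on the affine substitution $\xi = -\nabla f(x) - \pfrac{g(x)}{x}^\top u - \pfrac{h(x)}{x}^\top v$, the observation that it maps $K_\alpha(x)$ onto the feasible set of~\eqref{eq:smooth-proj-grad} with matching objective values, and uniqueness of the minimizer of~\eqref{eq:smooth-proj-grad}. The only differences are cosmetic — you argue part~(i) directly where the paper argues by contradiction, and in part~(ii) you invoke the optimal-value identity rather than comparing against a specific element of $\Lambda_\alpha(x)$.
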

\begin{proof}
  To show (i), let $(u, v) \in \Lambda_\alpha(x)$. Then there is
  $\xi \in \real^n$ such that $(\xi, u, v)$ solves
  \eqref{eq:KKT_proj}.  By \eqref{eq:dlagrangian},
  $\xi = -\nabla f(x) - \pfrac{g(x)}{x}^\top u - \pfrac{h(x)}{x}^\top
  v$ and substituting $\xi$ into the constraints
  of~\eqref{eq:smooth-proj-grad},
  it follows immediately that $(u, v) \in K_\alpha(x)$, defined
  in~\eqref{eq:Kalpha}.  We claim that $(u, v)$ is also optimal
  for~\eqref{eq:feedback}. To prove this, let $(u', v')$ be a solution
  of \eqref{eq:feedback} and, reasoning by contradiction, suppose
  \[
  \normB{\pfrac{g(x)}{x}^\top u + \pfrac{h(x)}{x}^\top v}^2 >
  \normB{\pfrac{g(x)}{x}^\top u' + \pfrac{h(x)}{x}^\top v'}^2.  
  \]
  Then,
  $\xi' = -\nabla f(x) - \pfrac{g(x)}{x}^\top u' -
  \pfrac{h(x)}{x}^\top v'$ satisfies the constraints in
  \eqref{eq:smooth-proj-grad} and
  $\lVert {\xi' + \nabla f(x)} \rVert < \norm{\xi + \nabla f(x)}$,
  which contradicts the fact that $\xi$ is optimal
  for~\eqref{eq:smooth-proj-grad}.

  To show (ii), suppose that $(u, v)$ solves~\eqref{eq:feedback}, and
  $\xi = -\nabla f(x) - \pfrac{g(x)}{x}^\top u - \pfrac{h(x)}{x}^\top
  v$.  We claim that $\xi$ is optimal for~\eqref{eq:smooth-proj-grad}.
  Indeed, if $\tilde{\xi}$ is the optimizer
  of~\eqref{eq:smooth-proj-grad}, since ${\Lambda_\alpha(x)} \neq \emptyset$,
  there exists $(\tilde{u}, \tilde{v}) \in \Lambda_\alpha(x)$ such that
  $(\tilde{\xi}, \tilde{u}, \tilde{v})$ solves~\eqref{eq:KKT_proj}.
  Note that $(\tilde{u}, \tilde{v})$ is feasible
  for~\eqref{eq:feedback}, and
  \[ 
  \begin{aligned}
    \norm{\xi + \nabla f(x)}^2 &= \normB{\pfrac{g(x)}{x}^\top u +
      \pfrac{h(x)}{x}^\top v}^2
    \\
    &\leq \normB{\pfrac{g(x)}{x}^\top \tilde{u} + \pfrac{h(x)}{x}^\top
      \tilde{v}}^2 \\
    &= \norm{\tilde{\xi} + \nabla f(x)}^2,
  \end{aligned} 
  \]
  where the inequality follows by optimality of $(u, v)$. 
  It follows that $\xi$ is optimal, but since the optimizer
  of~\eqref{eq:smooth-proj-grad} is unique, $\xi =
  \Gc_\alpha(x)$. Hence,
  $ \Gc_\alpha(x) = -\nabla f(x) - \pfrac{g(x)}{x}^\top u -
  \pfrac{h(x)}{x}^\top v$, which is the closed-loop implementation
  of~\eqref{eq:feedback} over~\eqref{eq:augment}.
\end{proof}

\begin{remark}[Lagrange Multipliers of Continuous Approximation to
  Projected Gradient]
  {\rm The notion of duality in Proposition~\ref{prop:equiv} is weaker
    than the usual notion of Lagrangian duality.  While the result
    ensures that the Lagrange multipliers
    of~\eqref{eq:smooth-proj-grad} are solutions
    to~\eqref{eq:feedback}, the converse is not true in general.  This
    is because if $(u, v)$ solves~\eqref{eq:feedback}, then
    $({\Gc}_\alpha(x), u, v)$ might not satisfy the complementarity
    condition~\eqref{eq:complementarity}, in which case
    $(u, v) \not\in \Lambda_\alpha(x)$.  An example of this is given by the
    following
    constrained problem with objective $f$ and inequality constraints
    $g(x) \leq 0$, where
    \[
      f(x) = \norm{x}^2 \qquad g(x) = \begin{bmatrix}
        0 & 1 \\ 0 & -1
      \end{bmatrix}x - \begin{bmatrix} 1 \\ 1
      \end{bmatrix}. 
    \] 
    The constraints satisfy LICQ for all $x \in \real^n$.  The
    solution is $x^* = 0$ and $\Lambda_\alpha(x^*) = \{ (0, 0) \}$.
    However, $(1, 1)$ is an optimizer of~\eqref{eq:feedback}, even
    though $(1, 1) \notin \Lambda_\alpha(x^*)$.
    \oprocend}
\end{remark}

{
\begin{remark}[Lagrangian Dual of Continuous Approximation to
  Projected Gradient]
  {\rm
  The safe gradient flow can also be implemented using the 
  Lagrangian dual of \eqref{eq:smooth-proj-grad}. This is obtained  
  by replacing the feedback controller \eqref{eq:feedback} with 
  \begin{equation*}
    \begin{aligned}
      \begin{bmatrix} u(x) \\ v(x) \end{bmatrix} &\in 
      \underset{(u, v) \in \real^{m}_{\geq 0} \times \real^k}{\text{argmin}} \Bigg\{
      \frac{1}{2}\begin{bmatrix}
        u \\ v
      \end{bmatrix}^\top \begin{bmatrix}
        \pfrac{g}{x}\pfrac{g}{x}^\top & \pfrac{g}{x}\pfrac{h}{x}^\top \\
        \pfrac{h}{x}\pfrac{g}{x}^\top & \pfrac{h}{x}\pfrac{h}{x}^\top
      \end{bmatrix} \begin{bmatrix}
        u \\ v
      \end{bmatrix} + \\
      &  u^\top \left(\pfrac{g}{x}\nabla f - \alpha g(x)\right) + v^\top \left(\pfrac{h}{x}\nabla f - \alpha h(x)\right) \Bigg\}
    \end{aligned}
  \end{equation*}
  and considering its closed-loop implementation over \eqref{eq:augment}. Though 
  this controller no longer has the same intuitive interpretation 
  as the CBF-QP \eqref{eq:feedback}, it has the advantage that its values correspond 
  exactly with ${\Lambda_\alpha(x)}$. 
  } \oprocend
\end{remark}
}

Proposition~\ref{prop:equiv} shows that there are two equivalent
interpretations of the safe gradient flow. The first is as the
closed-loop system corresponding to~\eqref{eq:augment} with the
controller \eqref{eq:feedback}, which maintains forward invariance of
the feasible set $\Cc$ while ensuring the dynamics is as close as
possible to the gradient flow of the objective function.  The second
interpretation is as an approximation of the projection of the
gradient flow of the objective function onto the tangent cone of the
feasible set.  Both interpretations are related by the fact that the
Lagrange multipliers corresponding to the approximate projection are
the optimal control inputs solving \eqref{eq:augment}.  Beyond the
interesting theoretical parallelism, this interpretation is
instrumental in our ensuing discussion when characterizing the
equilibria, regularity, and stability properties of the safe gradient
flow.

\section{Stability Analysis of the Safe Gradient Flow}

Here we conduct a thorough analysis of the stability properties of the
safe gradient flow and show that it solves Problem~\ref{prob:main}. We
start by characterizing its equilibria and regularity properties, then
focus on establishing the stability properties of local minimizers,
and finally characterize the global convergence properties of the
flow.

\subsection{Equilibria, Regularity, and Safety}

We rely on the necessary optimality conditions introduced in
Section~\ref{sec:equivalence} to characterize the equilibria
of~${\Gc}_\alpha$.

\begin{proposition}[Equilibria of safe gradient flow
    correspond to KKT points]\label{prop:equilibria}
  If MFCQ holds at $x^* \in \Cc$, then
  \begin{enumerate}
  \item ${\Gc}_\alpha(x^*) = 0$ if and only if $x^* \in \KKT$;
  \item If $x^* \in \KKT$, then $\Lambda_\alpha(x^*)$ is the set of Lagrange
    multipliers of \eqref{eq:opt} at~$x^*$.
  \end{enumerate}
\end{proposition}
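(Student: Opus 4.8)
The plan is to compare, at the feasible point $x^* \in \Cc$, the KKT system \eqref{eq:KKT_proj} of the strongly convex quadratic program \eqref{eq:smooth-proj-grad} that defines $\Gc_\alpha$ with the KKT system \eqref{eq:KKT_opt} of the original problem \eqref{eq:opt}, exploiting that $x^* \in \Cc$ forces $h(x^*) = 0$ and $g(x^*) \leq 0$ and that $\alpha > 0$. First I would invoke Lemma~\ref{lem:necessity-optimality}: since MFCQ holds at $x^* \in \Cc$, we have $\Lambda_\alpha(x^*) \neq \emptyset$, so the conditions \eqref{eq:KKT_proj} are \emph{necessary} (in addition to being sufficient by strong convexity, as already noted after \eqref{eq:KKT_proj}) for optimality in \eqref{eq:smooth-proj-grad} at $x^*$. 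Since the optimizer of \eqref{eq:smooth-proj-grad} is unique, every triple $(\xi, u, v)$ solving \eqref{eq:KKT_proj} at $x^*$ necessarily has $\xi = \Gc_\alpha(x^*)$.

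The core observation is that for the specific value $\xi = 0$ and any $(u, v) \in \real^m_{\geq 0} \times \real^k$, the system \eqref{eq:KKT_proj} at $x^*$ collapses exactly onto \eqref{eq:KKT_opt} at $x^*$: the stationarity condition \eqref{eq:dlagrangian} with $\xi = 0$ is precisely \eqref{eq:KKT_opt1}; \eqref{eq:pequality} becomes $\alpha h(x^*) = 0$ and \eqref{eq:pinequality} becomes $\alpha g(x^*) \leq 0$, both automatic because $x^* \in \Cc$ and both encoding exactly the primal feasibility requirements $h(x^*) = 0$, $g(x^*) \leq 0$; \eqref{eq:nonnegativity} is dual feasibility $u \geq 0$; and \eqref{eq:transversality} becomes $\alpha\, u^\top g(x^*) = 0$, equivalent to complementary slackness $u^\top g(x^*) = 0$ since $\alpha > 0$. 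Hence $(0, u, v)$ solves \eqref{eq:KKT_proj} at $x^*$ if and only if $(u, v)$ is a vector of Lagrange multipliers for \eqref{eq:opt} at $x^*$.

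Assembling the pieces proves both items. For the ``if'' direction of (i), if $x^* \in \KKT$ with multipliers $(u^*, v^*)$, the core observation shows $(0, u^*, v^*)$ solves \eqref{eq:KKT_proj}; by sufficiency and uniqueness of the optimizer, $\xi = 0$ is the optimizer, i.e.\ $\Gc_\alpha(x^*) = 0$. For the ``only if'' direction, if $\Gc_\alpha(x^*) = 0$, then since $\Lambda_\alpha(x^*) \neq \emptyset$ there is $(u, v)$ with $(0, u, v)$ solving \eqref{eq:KKT_proj}, and the core observation yields that $(x^*, u, v)$ is a KKT triple of \eqref{eq:opt}, so $x^* \in \KKT$. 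For (ii), if $x^* \in \KKT$ then $\Gc_\alpha(x^*) = 0$ by (i), and uniqueness of the optimizer forces the first component of every triple in the definition \eqref{eq:lagrange-mult-set} of $\Lambda_\alpha(x^*)$ to be $0$; thus $\Lambda_\alpha(x^*) = \{(u, v) \mid (0, u, v)\text{ solves }\eqref{eq:KKT_proj}\}$, which by the core observation is exactly the set of Lagrange multipliers of \eqref{eq:opt} at $x^*$.

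I do not expect a genuine obstacle here: the argument is essentially a term-by-term matching of two KKT systems, with the $\alpha$-scaling absorbed harmlessly. The two points that require care are (a) appealing to Lemma~\ref{lem:necessity-optimality} — and hence to the MFCQ hypothesis — to guarantee both that $\Lambda_\alpha(x^*)$ is nonempty and that \eqref{eq:KKT_proj} is necessary for optimality, which is needed for the ``only if'' direction of (i); and (b) using strong convexity (uniqueness of the optimizer of \eqref{eq:smooth-proj-grad}) to conclude that \emph{every} triple defining $\Lambda_\alpha(x^*)$ shares the first component $\xi = 0$, which is what upgrades the matching in step two from a mere inclusion to the exact identification of $\Lambda_\alpha(x^*)$ claimed in (ii).
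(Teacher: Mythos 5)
Your proposal is correct and follows essentially the same route as the paper: invoke Lemma~\ref{lem:necessity-optimality} to get $\Lambda_\alpha(x^*)\neq\emptyset$, observe that at a feasible $x^*$ the system \eqref{eq:KKT_proj} with $\xi=0$ reduces term-by-term (using $\alpha>0$) to \eqref{eq:KKT_opt}, and use strong convexity/uniqueness of the optimizer of \eqref{eq:smooth-proj-grad} to close both directions. Your treatment of part (ii) is slightly more explicit than the paper's about why every element of $\Lambda_\alpha(x^*)$ must pair with $\xi=0$, but the substance is identical.
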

\begin{proof}
  Suppose that $\Gc_{\alpha}(x^*) = 0$. By
  Lemma~\ref{lem:necessity-optimality}, there exists
  $(u^*, v^*) \in \Lambda_\alpha(x^*)$ such that $(0, u^*, v^*)$ satisfies
  the necessary optimality conditions in \eqref{eq:KKT_proj}, which
  reduce to
  \begin{subequations}
    \label{eq:KKT_0}
    \begin{align}
      \nabla f(x^*) + \pfrac{g(x^*)}{x}^\top u^* +
      \pfrac{h(x^*)}{x}^\top v^* &= 0 
      \\
      \alpha g(x^*) &\leq 0 \\
      \alpha h(x^*) &= 0 \\
      u^* &\geq 0 \\
      (u^*)^\top (\alpha g(x^*)) &= 0
    \end{align}
  \end{subequations}
  Because $\alpha > 0$, it follows immediately that \eqref{eq:KKT_0}
  implies that $(x^*, u^*, v^*)$ satisfy \eqref{eq:KKT_opt} and $x^*
  \in \KKT$.
  
  Conversely, if $x^* \in \KKT$, then for any $(u^*, v^*)$ such that
  $(x^*, u^*, v^*)$ solves \eqref{eq:KKT_opt}, we have that
  $(0, u^*, v^*)$ solves \eqref{eq:KKT_proj}, which implies that
  ${\Gc}_\alpha(x^*) = 0$ and $(u^*, v^*) \in \Lambda_\alpha(x^*)$.
\end{proof}


Proposition~\ref{prop:equilibria}(i) shows that the safe gradient flow
meets Problem~\ref{prob:main}(iii).  The correspondence in
Proposition~\ref{prop:equilibria}(ii) between the Lagrange multipliers
of~\eqref{eq:smooth-proj-grad} and the Lagrange multipliers
of~\eqref{eq:opt} means that the proposed method can be interpreted as
a primal-dual method when implemented via~\eqref{eq:smooth-proj-grad}.
This is because the state of the system~\eqref{eq:augment} corresponds
to the primal variable of \eqref{eq:opt}, and the inputs to the
system~\eqref{eq:augment} correspond to the dual variables.

We next establish that ${\Gc}_\alpha$ is locally Lipschitz on an open set
containing $\Cc$ when the MFCQ and CRC conditions hold. This ensures the
existence and uniqueness of classical solutions to the safe gradient flow. 

\begin{proposition}[Lipschitzness of safe gradient
  flow]\label{prop:Lipschitz}
  Let $\alpha > 0$ and suppose that \eqref{eq:opt} satisfies MFCQ and CRC 
  for all $x \in \Cc$, $f, g$ and $h$ are continuously differentiable, and
  their derivatives are locally Lipschitz. Then ${\Gc}_\alpha$ is well
  defined and locally Lipschitz on an open set $X$ containing $\Cc$.
\end{proposition}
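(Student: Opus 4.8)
The plan is to view $\Gc_\alpha$ as the solution map of the parametric, strongly convex quadratic program~\eqref{eq:smooth-proj-grad} and to establish, in this order: (a) that $\Gc_\alpha$ is single-valued on an open set $X$ containing~$\Cc$, and (b) that it is locally Lipschitz there. For step (a), note that for each fixed $x$ the objective $\xi \mapsto \tfrac12\norm{\xi + \nabla f(x)}^2$ is $1$-strongly convex and the feasible set of~\eqref{eq:smooth-proj-grad} is a (possibly empty) polyhedron; hence, whenever the program is feasible it has a \emph{unique} minimizer, so $\Gc_\alpha$ is single-valued wherever it is defined. By Lemma~\ref{lem:necessity-optimality}, MFCQ at a point $\bar x \in \Cc$ yields an open neighborhood $U_{\bar x}$ on which~\eqref{eq:smooth-proj-grad} is feasible (in fact $\Lambda_\alpha(x') \neq \emptyset$) for every $x' \in U_{\bar x}$. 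Taking $X = \bigcup_{\bar x \in \Cc} U_{\bar x}$ produces an open set containing $\Cc$ on which $\Gc_\alpha$ is well defined.

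For step (b), fix $\bar x \in \Cc$ and work on a small closed ball $\Bc \subset X$ around $\bar x$. Under the hypotheses, the data of~\eqref{eq:smooth-proj-grad}, namely $x \mapsto \nabla f(x)$, $x \mapsto \pfrac{g(x)}{x}$, $x \mapsto g(x)$, $x \mapsto \pfrac{h(x)}{x}$, and $x \mapsto h(x)$, are all locally Lipschitz. The Slater point $\tilde\xi$ constructed at $\bar x$ in the proof of Lemma~\ref{lem:necessity-optimality} (cf.~\eqref{eq:slater}) satisfies the strict inequalities with a positive margin, and $\pfrac{h(\bar x)}{x}$ has full row rank; by continuity of the data these properties persist on a neighborhood of $\bar x$, so the constraint system defining the feasible-set mapping $x \mapsto \Fc(x) := \{\xi \mid \pfrac{g(x)}{x}\xi \le -\alpha g(x),\ \pfrac{h(x)}{x}\xi = -\alpha h(x)\}$ is regular (in Robinson's sense) uniformly on $\Bc$. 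Robinson's stability theorem (equivalently, Hoffman's error bound with a locally uniform constant, i.e., metric regularity under MFCQ) then gives that $x \mapsto \Fc(x)$ is Lipschitz continuous in the Hausdorff sense on $\Bc$. Combining the uniform strong convexity of the objective, the local Lipschitz continuity of $x \mapsto \nabla f(x)$, and the Hausdorff-Lipschitz continuity of $x \mapsto \Fc(x)$, a standard sensitivity result for linear--quadratic parametric programs (see, e.g.,~\cite{RTR-RJBW:98}) yields that the unique minimizer $\Gc_\alpha(x)$ is Lipschitz continuous on $\Bc$. Since $\bar x \in \Cc$ was arbitrary, $\Gc_\alpha$ is locally Lipschitz on $X$.

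The delicate point is the final step: upgrading from mere continuity of the QP solution --- which already follows from \cite[Theorem~2.2]{MJB-BD:95}, as used in the proof of Proposition~\ref{prop:approx} --- to genuine Lipschitz continuity. A naive variational-inequality estimate, comparing $\Gc_\alpha(x_1)$ and $\Gc_\alpha(x_2)$ by inserting a point of $\Fc(x_1)$ lying within $O(\norm{x_1 - x_2})$ of $\Gc_\alpha(x_2)$, only produces a H\"older-$\tfrac12$ bound, because it ignores the affine (polyhedral) structure of the constraints in $\xi$. Recovering the full Lipschitz rate requires genuinely exploiting the combination of uniform strong convexity with the metric regularity afforded by MFCQ; one clean way to organize the argument is to first eliminate the (locally full-rank) equality constraints, reducing~\eqref{eq:smooth-proj-grad} to an inequality-only strongly convex QP whose feasible polyhedron varies Lipschitz-continuously with $x$, and then invoke the corresponding Lipschitzian stability theorem for such programs.
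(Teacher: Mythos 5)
Your proposal is correct and follows essentially the same route as the paper: MFCQ yields Slater's condition for the constraints of \eqref{eq:smooth-proj-grad} on a neighborhood $U_x$ of each $x \in \Cc$ (via Lemma~\ref{lem:necessity-optimality}), so the strongly convex QP has a unique solution there, and Lipschitzness is then obtained from a known stability theorem for minimizers over affinely perturbed polyhedra under Slater's condition --- the paper cites \cite[Theorem~3.10]{JWD:74} for precisely the step you correctly flag as the delicate one. Taking $X = \bigcup_{x \in \Cc} U_x$ concludes the argument in both cases.
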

\begin{proof}
  By the proof of Lemma~\ref{lem:necessity-optimality}, if MFCQ holds
  at $x \in \Cc$, there is an open neighborhood $U_x$ containing $x$
  on which the constraints of \eqref{eq:smooth-proj-grad} satisfy
  Slater's condition. Next, 
  for all $i=1, \dots, m$ and $j=1, \dots, k$,
  \[
    \begin{aligned}
      \pfrac{}{\xi} (\nabla g_i(x)^\top \xi + \alpha g_i(x)) &=
      \nabla g_i(x)^\top ,
      \\
      \pfrac{}{\xi} (\nabla h_j(x)^\top \xi + \alpha h_j(x)) &=
      \nabla h_j(x)^\top ,
    \end{aligned}
  \]
  so the gradients of the constraints in \eqref{eq:smooth-proj-grad} are 
  the same as those in \eqref{eq:opt} and therefore  \eqref{eq:smooth-proj-grad}
  satisfies CRC. Then, ${\Gc}_\alpha$ is the unique solution to
  \eqref{eq:smooth-proj-grad} on $U_x$, and by \cite[Thm.
  3.6]{JL:95}, ${\Gc}_\alpha$ is locally Lipschitz on~$U_x$.  The desired
  result follows by letting $X=\bigcup_{x \in \Cc}U_x$.
\end{proof}

Proposition~\ref{prop:Lipschitz} verifies that the safe gradient flow
meets Problem~\ref{prob:main}(i).  Next, we show that under slightly
stronger constraint qualification conditions at KKT points, the triple
satisfying \eqref{eq:KKT_proj} is unique and Lipschitz near them.

\begin{proposition}[Lipschitzness of the solution
  to~\eqref{eq:KKT_proj}]\label{prop:strong-regularity}
  Let $x^* \in \KKT$ and suppose~\eqref{eq:opt} satisfies LICQ at
  $x^*$. Then, there exists an open set $U$ containing $x^*$ and
  Lipschitz functions $u:U \to \real_{\geq 0}^m$, $v:U\to\real^m$ such
  that $({\Gc}_\alpha(x), u(x), v(x))$ is the unique solution to
  \eqref{eq:KKT_proj} for all~$x \in U$.
\end{proposition}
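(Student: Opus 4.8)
The plan is to leverage the Lipschitz continuity of $\Gc_\alpha$ already established in Proposition~\ref{prop:Lipschitz}, together with an analysis of the active set of~\eqref{eq:smooth-proj-grad} near $x^*$ and the observation that LICQ turns the multiplier into the output of an invertible linear system. First I would record what $x^* \in \KKT$ and LICQ buy us. Since $x^* \in \Cc$ we have $I_+(x^*) = \emptyset$, so EMFCQ and MFCQ coincide at $x^*$, and LICQ implies MFCQ; hence by Lemma~\ref{lem:necessity-optimality} (EMFCQ branch) there is an open $U_0 \ni x^*$ with $\Lambda_\alpha(x) \neq \emptyset$ for every $x \in U_0$, i.e.~\eqref{eq:KKT_proj} has at least one solution for each such $x$ (this covers infeasible $x$ as well). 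Shrinking $U_0$, Proposition~\ref{prop:Lipschitz} lets us also assume $\Gc_\alpha$ is well defined and locally Lipschitz on $U_0$, and by Proposition~\ref{prop:equilibria} we have $\Gc_\alpha(x^*) = 0$.

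Next I would localize the active set. Write $I_0 = I_0(x^*)$ and $A(x) = \begin{bmatrix} \pfrac{g_{I_0}(x)}{x}^\top & \pfrac{h(x)}{x}^\top \end{bmatrix} \in \real^{n \times (|I_0| + k)}$, whose columns are $\{\nabla g_i(x)\}_{i \in I_0} \cup \{\nabla h_j(x)\}_{j=1}^{k}$. LICQ at $x^*$ says $A(x^*)$ has full column rank, and since the Jacobians of $g, h$ are continuous, $A(x)$ keeps full column rank on an open $U_1 \ni x^*$. For $i \notin I_0$ we have $g_i(x^*) < 0$, so at $x^*$ the quantity $\nabla g_i(x)^\top \Gc_\alpha(x) + \alpha g_i(x)$ equals $\alpha g_i(x^*) < 0$; by continuity of $\Gc_\alpha$, $g$ and $\nabla g$ it stays negative on an open $U_2 \ni x^*$, i.e.\ the $i$-th inequality of~\eqref{eq:smooth-proj-grad} is strictly inactive at $\Gc_\alpha(x)$ for all $x \in U_2$. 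Set $U = U_0 \cap U_1 \cap U_2$.

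Then I would define the multiplier explicitly: for $x \in U$ let $\begin{bmatrix} u_{I_0}(x) \\ v(x) \end{bmatrix} = -A(x)^\dagger\big(\Gc_\alpha(x) + \nabla f(x)\big) = -\big(A(x)^\top A(x)\big)^{-1} A(x)^\top\big(\Gc_\alpha(x) + \nabla f(x)\big)$, and complete $u(x) \in \real^m$ by $u_i(x) = 0$ for $i \notin I_0$. Since the entries of $A$ are locally Lipschitz (they are derivatives of $g,h$), $A^\top A$ is locally Lipschitz and, on $U$, bounded away from singularity, so $(A^\top A)^{-1}$ is locally Lipschitz there; together with Lipschitzness of $\Gc_\alpha$ (Proposition~\ref{prop:Lipschitz}) and of $\nabla f$, the maps $u, v$ are Lipschitz on $U$. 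To finish, I would show $(\Gc_\alpha(x), u(x), v(x))$ is the unique solution of~\eqref{eq:KKT_proj} on $U$. For uniqueness, take any solution $(\xi, \tilde u, \tilde v)$ at some $x \in U$: strong convexity of~\eqref{eq:smooth-proj-grad} forces $\xi = \Gc_\alpha(x)$; the strict inactivity on $U_2$ together with the complementarity condition~\eqref{eq:transversality} forces $\tilde u_i = 0$ for $i \notin I_0$; hence the stationarity equation~\eqref{eq:dlagrangian} collapses to $A(x)\begin{bmatrix} \tilde u_{I_0} \\ \tilde v \end{bmatrix} = -\big(\Gc_\alpha(x) + \nabla f(x)\big)$, whose unique solution (full column rank of $A(x)$) is $-A(x)^\dagger(\Gc_\alpha(x)+\nabla f(x))$, so $(\tilde u, \tilde v) = (u(x), v(x))$. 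Existence then follows from $\Lambda_\alpha(x) \neq \emptyset$ on $U_0 \supseteq U$: it furnishes a solution which, by the displayed argument, must equal $(\Gc_\alpha(x), u(x), v(x))$; in particular $u(x) \geq 0$ and every relation in~\eqref{eq:KKT_proj} holds.

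The step I expect to be the main obstacle is the active-set localization: the active set of~\eqref{eq:smooth-proj-grad} is not constant near $x^*$, so one cannot simply invert a fixed KKT matrix; the point that rescues the argument is $\Gc_\alpha(x^*) = 0$, which makes every constraint outside $I_0(x^*)$ strictly inactive at $x^*$, and by continuity of $\Gc_\alpha$ this persists nearby, pinning the corresponding multipliers to zero by complementarity and leaving an overdetermined-but-consistent linear system governed by the full-column-rank matrix $A(x)$. (Alternatively, one could check that the strong second-order sufficient condition holds trivially—the QP has Hessian $I$—and that LICQ of~\eqref{eq:opt} at $x^*$ implies the constraint-gradient linear independence for~\eqref{eq:smooth-proj-grad} at $(\xi,u,v)=(0,u^*,v^*)$, and then invoke Robinson's strong regularity theorem to obtain the Lipschitz localization directly; the bookkeeping above is a self-contained version of that argument.)
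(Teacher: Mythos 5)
Your proof is correct, but it follows a genuinely different route from the paper's. The paper verifies the hypotheses of Robinson's strong regularity theorem for the generalized equation~\eqref{eq:KKT_proj} --- the strong second-order sufficient condition holds trivially because $\nabla^2_{\xi\xi}L = I$, and LICQ for~\eqref{eq:opt} at $x^*$ transfers to linear independence of the binding constraint gradients of~\eqref{eq:smooth-proj-grad} at $\xi=0$ --- and then cites \cite[Theorem~4.1, Corollary~2.1]{SMR:80} to obtain Lipschitzness and uniqueness of the whole triple in one stroke; this is exactly the alternative you sketch in your closing parenthetical. Your main argument instead builds everything by hand: the key observation that $\Gc_\alpha(x^*)=0$ makes every constraint outside $I_0(x^*)$ strictly inactive, so complementarity pins those multipliers to zero on a neighborhood, and LICQ reduces stationarity to an overdetermined-but-consistent full-column-rank linear system solved by $-A(x)^\dagger(\Gc_\alpha(x)+\nabla f(x))$. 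This buys an explicit closed-form expression for the multipliers and avoids the black box of strong regularity, at the cost of having to import Lipschitzness of $\Gc_\alpha$ separately. On that last point, one small caveat: Proposition~\ref{prop:Lipschitz} as stated assumes MFCQ on all of $\Cc$, whereas you only have LICQ (hence MFCQ) at $x^*$; this is harmless because the proof of that proposition is purely local (Slater's condition at $x^*$ plus \cite[Theorem~3.10]{JWD:74} gives Lipschitzness of $\Gc_\alpha$ on a neighborhood $U_{x^*}$), but you should cite the local argument rather than the proposition's statement. Your uses of Lemma~\ref{lem:necessity-optimality} (noting that EMFCQ and MFCQ coincide at $x^*\in\Cc$ since $I_+(x^*)=\emptyset$, so nonemptiness of $\Lambda_\alpha$ holds on a full open neighborhood) and of the a posteriori nonnegativity of $u(x)$ (inherited from the element of $\Lambda_\alpha(x)$ that your uniqueness argument forces to coincide with the pseudoinverse formula) are both sound.
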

\begin{proof}
  We claim that the variational equation \eqref{eq:KKT_proj} is
  \emph{strongly regular}~\cite{SMR:80} for all $x^* \in \KKT$.
  Strong regularity implies, cf.~\cite[Cor. 2.1]{SMR:80}, that
  there exists an open set $U$ containing $x^*$ and Lipschitz
  functions $\xi:U \to \real^n$, $u:U \to \real_{\geq 0}^m$,
  $v:U \to \real^k$ such that $(\xi(x), u(x), v(x))$ is the unique
  triple solving~\eqref{eq:smooth-proj-grad}. Since the solution
  \eqref{eq:smooth-proj-grad} is unique, if such a triple exists, then
  $\xi(x) = \Gc_\alpha(x)$.  To prove the claim, we begin by noting that
  \eqref{eq:smooth-proj-grad} satisfies the strong second-order
  sufficient condition since
  $ 
    \nabla^2_{\xi\xi}
    L(\xi, u, v; x) = I \succ 0.
  $
  Let $(x^*, u^*, v^*)$ be a KKT triple of~\eqref{eq:opt}.  By
  Proposition~\ref{prop:equilibria}, $(0, u^*, v^*)$ satisfies~\eqref{eq:KKT_proj}.  
  Since the $i$th inequality constraint of
  \eqref{eq:smooth-proj-grad} is
  $ \nabla g_i(x^*)^\top \xi + \alpha g_i(x^*) \leq 0$, when $\xi = 0$
  the constraint is active if and only if $g_i(x^*) = 0$.  It follows
  that when $x^* \in \KKT$, the indices of the active constraints of
  \eqref{eq:smooth-proj-grad} are the same as those of
  \eqref{eq:opt}. Moreover, by the reasoning 
  in the proof of Proposition \ref{prop:Lipschitz}, 
  gradients of the binding (i.e., the active inequality and
  equality) constraints of~\eqref{eq:smooth-proj-grad}
  and~\eqref{eq:opt} are also the same. By LICQ, the gradients of the
  binding constraints are linearly independent, which along with the
  strong second-order condition implies that \eqref{eq:KKT_proj} is
  strongly regular by \cite[Thm. 4.1]{SMR:80}.
\end{proof}

The significance of Proposition \ref{prop:strong-regularity} is
twofold.  First, it establishes that, under certain conditions, the
Lagrange multipliers of \eqref{eq:smooth-proj-grad} are Lipschitz as a
function of $x$, which ensures the existence of a locally Lipschitz
continuous feedback solving \eqref{eq:feedback}. Secondly, the result
establishes conditions for uniqueness of the Lagrange multipliers in a
neighborhood of an equilibrium~$x^*$. These facts will play an
important role in the stability analysis of local minimizers in the
sequel.

We now show in the next result that the safe gradient flow also meets
Problem~\ref{prob:main}(ii). The result follows by applying
Lemma~\ref{lem:safe-feedback} with $\phi=(g, h)$ as a VCBF and 
local Lipschitz continuity of the closed-loop dynamics, c.f.
Proposition ~\ref{prop:Lipschitz}.  

\begin{theorem}[Safety of feasible set under safe gradient
    flow]\label{thm:feasible-safety}
  Consider the optimization problem~\eqref{eq:opt}.  If MFCQ and CRC are
  satisfied on $\Cc$, then $\Cc$ is forward invariant and
  asymptotically stable under the safe gradient flow.
\end{theorem}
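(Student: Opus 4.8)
The plan is to assemble the theorem from the pieces already in place: recognize the safe gradient flow as a closed-loop system of exactly the type covered by Lemma~\ref{lem:safe-feedback}, check the three hypotheses of that lemma one by one, and conclude. No new estimate is needed; the work is in verifying that each hypothesis has already been established under MFCQ on $\Cc$.

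First I would recall that, by Lemma~\ref{lemma:feedback-feasibility}, MFCQ on $\Cc$ implies that $\phi=(g,h)$ is a valid $(m,k)$-VCBF for the control-affine system~\eqref{eq:augment} on an open set $X\supseteq\Cc$, relative to $\Uc=\real^m_{\geq 0}\times\real^k$; in particular $K_\alpha(x)\neq\emptyset$ for every $x\in X$, and since $f,g,h$ are $C^1$ the Dini derivatives in the abstract definition of $K_\alpha$ reduce to Lie derivatives, so $K_\alpha(x)$ is precisely the set~\eqref{eq:Kalpha}. Next, shrinking $X$ if necessary to the domain furnished by Proposition~\ref{prop:Lipschitz}, I would use Proposition~\ref{prop:equiv}(ii) to identify the safe gradient flow $\dot x=\Gc_\alpha(x)$ with the closed-loop system obtained by implementing the feedback~\eqref{eq:feedback} over~\eqref{eq:augment}: for any selection $(u(x),v(x))$ solving~\eqref{eq:feedback} one has $\Gc_\alpha(x)=-\nabla f(x)-\pfrac{g(x)}{x}^\top u(x)-\pfrac{h(x)}{x}^\top v(x)$, and by construction $(u(x),v(x))\in K_\alpha(x)\subseteq\Uc$ for all $x\in X$. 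Finally, Proposition~\ref{prop:Lipschitz} gives that $\Gc_\alpha$ is locally Lipschitz on $X$, which is exactly the regularity required of $x\mapsto F_0(x)+\sum_i u_i(x)F_i(x)$ in Lemma~\ref{lem:safe-feedback}. With all three hypotheses of Lemma~\ref{lem:safe-feedback} in hand, that lemma yields directly that $\Cc$ is forward invariant and asymptotically stable under $\dot x=\Gc_\alpha(x)$, i.e. under the safe gradient flow.

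The main obstacle — the only point requiring genuine care rather than bookkeeping — is the possible non-uniqueness of the minimizer in~\eqref{eq:feedback}: Lemma~\ref{lem:safe-feedback} is stated for a genuine (single-valued) feedback $u:X\to\Uc$, so one must first know that the safe gradient flow is a well-posed single-valued system. This is precisely what the equivalence in Proposition~\ref{prop:equiv}(ii) provides: the primal velocity $\Gc_\alpha(x)$ is the unique optimizer of the strongly convex program~\eqref{eq:smooth-proj-grad}, hence unambiguous even when the dual selection $(u(x),v(x))$ solving~\eqref{eq:feedback} is not. A secondary remark worth including is that the asymptotic stability asserted here is of $\Cc$ as a set and relative to the whole of $X$ (an open set containing $\Cc$), so trajectories from nearby infeasible initial conditions are also attracted to $\Cc$; this is inherited verbatim from Lemma~\ref{lem:safe-feedback} and is what makes the dynamics an anytime algorithm tolerant of infeasible starts.
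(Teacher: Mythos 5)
Your proposal is correct and follows essentially the same route as the paper: invoke Lemma~\ref{lemma:feedback-feasibility} to certify $\phi=(g,h)$ as a VCBF under MFCQ, use Proposition~\ref{prop:equiv}(ii) to identify $\Gc_\alpha$ with the closed-loop implementation of~\eqref{eq:feedback} over~\eqref{eq:augment}, obtain local Lipschitzness from Proposition~\ref{prop:Lipschitz}, and conclude via Lemma~\ref{lem:safe-feedback}. Your remark that the well-posedness issue from non-uniqueness of the dual selection is resolved by the uniqueness of the primal optimizer of~\eqref{eq:smooth-proj-grad} is exactly the right point of care and is consistent with the paper's argument.
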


\begin{remark}[Advantages of safe gradient flow over
    projected gradient flow]
  {\rm Unlike the projected gradient flow, the vector field
    $\Gc_{\alpha}$ is locally Lipschitz, so classical solutions to
    $\dot{x} = \Gc_{\alpha}(x)$ exist, and the continuous-time flow can
    be numerically solved using standard ODE discretization
    schemes. Secondly, under mild conditions, $\Gc_{\alpha}$ is well
    defined for initial conditions outside $\Cc$, allowing us to
    guarantee convergence to a local minimizer starting from
    infeasible initial conditions.  Finally, because both
    \eqref{eq:proj-grad} and \eqref{eq:smooth-proj-grad} are
    least-squares problems of the same dimension subject to affine
    constraints, the computational complexity of solving either one is
    equivalent.  } \oprocend
\end{remark}

{
  \begin{remark}[Discretization of safe gradient flow and
      role of parameter~$\alpha$]
    When considering discretizations of the safe gradient flow, the
    parameter $\alpha$ plays an important role.  By construction, trajectories
    of the safe gradient flow beginning at infeasible initial
    conditions converge to~$\Cc$ at an exponential rate~$\alpha > 0$,
    so larger values of~$\alpha$ ensure faster convergence.  
    On the other hand, smaller values 
    of~$\alpha$ result in a design that enforces safety more
    conservatively and hence, intuitively, this should allow for
    larger stepsizes. Our preliminary numerical experiments with the
    forward-Euler discretization
    $x^+ = x + h\Gc_\alpha(x)$ confirm
    these intuitions, showing that larger choices of~$\alpha$ reduce
    the range of allowable stepsizes~$h$ that preserve the invariance
    of the feasible set $\Cc$ and stability of local minimizers. In
    particular, we have noticed that the maximal allowable stepsize
    $h^*_\alpha$ such that $0 < h < h^*_\alpha$ ensures stability
    and approximate safety, 
    satisfies $h^*_\alpha \to 0$ as~$\alpha \to \infty$. For
    space reasons, we leave to future work the formal characterization
    of suitable stepsizes.  \oprocend
  \end{remark}
}

\subsection{Stability of Isolated Local
  Minimizers}\label{sec:stability-isolated-local-min}

Here we characterize the stability properties of isolated local
minimizers under the safe gradient flow. The following result shows
that the safe gradient flow meets Problem~\ref{prob:main}(iv). To ensure 
existence and uniqueness of solutions, we assume throughout the rest of 
the paper that CRC holds on $\Cc$. 

\begin{theorem}[Stability of isolated local
  minimizers]\label{thm:stability}
  Consider the optimization problem~\eqref{eq:opt}. Let 
  $x^*$ be a local minimizer and an isolated KKT point, 
  and let $U$ be an open set such that $x^*$ is the only KKT point contained
  in $U$. Then,
  \begin{enumerate}
  \item If MFCQ holds for all $x \in U \cap \Cc$, then
    $x^*$ is asymptotically stable relative to $\Cc$;
    \label{item:relative-stability}
  \item If EMFCQ holds for all $x \in \bar{U}$, then $x^*$
    is asymptotically stable relative to $\real^n$;
    \label{item:aymptotic-stability}
  \item If LICQ, strict complementarity, and the second-order sufficient
    condition hold at $x^*$, then $x^*$ is exponentially stable
    relative to $\real^n$.
    \label{item:exponential-stability}
  \end{enumerate}
\end{theorem}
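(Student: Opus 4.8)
\emph{Proof strategy.} I would prove the three items in order, bootstrapping (ii) from (i) and handling (iii) separately by linearization. For item (i) the natural Lyapunov candidate on $\Cc$ is $V(x) = f(x) - f(x^*)$. First note that $\xi = 0$ is feasible for the quadratic program \eqref{eq:smooth-proj-grad} defining $\Gc_\alpha$ whenever $x \in \Cc$ (then $-\alpha g(x)\ge 0$, $-\alpha h(x)=0$), so by optimality $\|\Gc_\alpha(x)+\nabla f(x)\|^2 \le \|\nabla f(x)\|^2$, i.e. $\nabla f(x)^\top \Gc_\alpha(x) \le -\tfrac12\|\Gc_\alpha(x)\|^2 \le 0$ on $\Cc$, with equality iff $\Gc_\alpha(x)=0$, i.e. (Proposition~\ref{prop:equilibria}) iff $x\in\KKT$. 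After shrinking $U$ so that $f\ge f(x^*)$ on $U\cap\Cc$, any $y\in U\cap\Cc$ with $f(y)=f(x^*)$ is a local minimizer, hence a KKT point by MFCQ, hence $y=x^*$; so $V$ is positive definite on $U\cap\Cc$ relative to $\{x^*\}$. Since $\tfrac{d}{dt}g_i(x)=\nabla g_i(x)^\top\Gc_\alpha(x)\le-\alpha g_i(x)$ and $\tfrac{d}{dt}h_j(x)=-\alpha h_j(x)$ along the flow, $\Cc$ is locally forward invariant, and a small compact neighborhood of $x^*$ in $\Cc$ of the form $\{V\le c\}\cap\overline B\cap\Cc\subset U$ is forward invariant. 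Lyapunov's direct method gives Lyapunov stability relative to $\Cc$, and LaSalle's invariance principle (the largest invariant subset of $\{\dot V=0\}$ in that set is the set of KKT points in $U$, namely $\{x^*\}$) gives attractivity; hence $x^*$ is asymptotically stable relative to $\Cc$.

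For item (ii) I would run a nested-attractor (cascade) argument. Step~1: under EMFCQ on $\overline U$, Lemma~\ref{lem:necessity-optimality} with $I=I_0(x)\cup I_+(x)$ provides Slater's condition, and hence—as in Proposition~\ref{prop:Lipschitz}—well-posedness and local Lipschitzness of $\Gc_\alpha$ on a neighborhood of $\overline U$ including points outside $\Cc$. The map $V_{\mathrm{inf}}(x)=\tfrac12\|[g(x)]_+\|^2+\tfrac12\|h(x)\|^2$ satisfies $\Dini_{\Gc_\alpha}V_{\mathrm{inf}}(x)\le-2\alpha V_{\mathrm{inf}}(x)$ (using $\tfrac{d}{dt}g_i\le-\alpha g_i$, $\tfrac{d}{dt}h_j=-\alpha h_j$ and $[g_i(x)]_+g_i(x)=[g_i(x)]_+^2$), so $V_{\mathrm{inf}}$ decays exponentially; together with the local Lipschitzian error bound $\dist_\Cc(x)\le\kappa\big(\|[g(x)]_+\|+\|h(x)\|\big)$ guaranteed by EMFCQ, this makes $\Cc$ locally exponentially stable near $x^*$. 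Step~2: $x^*$ is asymptotically stable relative to $\Cc$ by item (i) (EMFCQ implies MFCQ). Step~3: invoke a reduction principle for the stability of nested positively invariant sets—a set that is asymptotically stable relative to a larger, asymptotically stable set, and that is itself Lyapunov stable relative to the ambient space, is asymptotically stable relative to the ambient space—applied with $\{x^*\}\subset\Cc\subset\real^n$, to obtain asymptotic stability of $x^*$ relative to $\real^n$.

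For item (iii) I would linearize. Under LICQ and strict complementarity the active set is locally constant at $x^*$: by Proposition~\ref{prop:strong-regularity} the multipliers are continuous, so $u_i(x)>0$ for active $i$ and the strictly inactive constraints stay inactive; thus near $x^*$ the program defining $\Gc_\alpha$ reduces to an equality-constrained one with active-constraint Jacobian $\widetilde A(x)$ of full row rank, and $\Gc_\alpha(x)=-P(x)\nabla f(x)-\alpha\,\widetilde A(x)^\dagger\widetilde g(x)$, where $P(x)=I-\widetilde A(x)^\dagger\widetilde A(x)$, $\widetilde A(x)^\dagger=\widetilde A(x)^\top(\widetilde A(x)\widetilde A(x)^\top)^{-1}$ and $\widetilde g$ collects the active-constraint functions (generalizing~\eqref{eq:tanabe}). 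Differentiating at $x^*$, where $\widetilde g(x^*)=0$ and $\nabla f(x^*)=-\widetilde A(x^*)^\top\mu^*$ for the active multipliers $\mu^*=(u^*_{I_0},v^*)$: the second term contributes $-\alpha\,\widetilde A(x^*)^\dagger\widetilde A(x^*)=-\alpha(I-P)$, and using $D_d[\widetilde A(x)^\top]\mu^*=\big(\nabla^2_{xx}L(x^*,u^*,v^*)-\nabla^2 f(x^*)\big)d$ one finds the first term contributes $-P\,\nabla^2_{xx}L(x^*,u^*,v^*)$, with $P:=P(x^*)$, so that
\[
  J := D\Gc_\alpha(x^*) = -P\,\nabla^2_{xx}L(x^*,u^*,v^*) - \alpha(I-P).
\]
In the orthogonal splitting $\real^n=\ker\widetilde A(x^*)\oplus\operatorname{range}\widetilde A(x^*)^\top$, $J$ is block upper triangular with diagonal blocks $-P\,\nabla^2_{xx}L(x^*,u^*,v^*)\,P|_{\ker\widetilde A(x^*)}$ and $-\alpha I$. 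Under strict complementarity the critical cone of \eqref{eq:opt} at $x^*$ equals $\ker\widetilde A(x^*)$, so the first block is symmetric negative definite by the second-order sufficient condition and the second is trivially Hurwitz; hence $J$ is Hurwitz and Lyapunov's indirect method—with $\Gc_\alpha$ defined on a full neighborhood of $x^*$ by Proposition~\ref{prop:strong-regularity}—yields local exponential stability relative to $\real^n$.

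\emph{Main obstacle.} The delicate step is item (ii): the flow only reaches $\Cc$ asymptotically (not in finite time), and off $\Cc$ the objective can transiently increase at rate $O(\dist_\Cc(x))$, which dominates the $O(\dist_\Cc(x)^2)$ decay of $V_{\mathrm{inf}}$, so a naive combined Lyapunov function $aV_{\mathrm{inf}}+(f-f(x^*))$ fails; the cascade/reduction machinery is essentially forced, and its most delicate ingredient is upgrading Lyapunov stability of $x^*$ relative to $\Cc$ to stability relative to $\real^n$ (the tangential drift along $\Cc$ must be controlled while the transient off $\Cc$ persists). A secondary technical point in item (iii) is that the closed-form linearization wants $f,g,h$ to be twice differentiable at $x^*$, slightly stronger than the blanket $C^{1,1}$ hypothesis; one either strengthens the smoothness assumption locally or replaces the differentiation by a second-order expansion in the sense of~\eqref{eq:second-dd}.
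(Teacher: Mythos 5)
Your treatments of items (i) and (iii) are essentially the paper's. For (i) the paper also uses $f$ as a Lyapunov function relative to $\Cc$ and concludes via the relative-stability test (Lemma~\ref{lem:relative-stability}); your derivation of $\nabla f(x)^\top \Gc_\alpha(x) \le -\tfrac12\norm{\Gc_\alpha(x)}^2$ from feasibility of $\xi=0$ in \eqref{eq:smooth-proj-grad} is a clean substitute for the paper's multiplier identity $\Dini_{\Gc_\alpha}f = -\norm{\Gc_\alpha}^2 + \alpha u^\top g + \alpha v^\top h$ (Lemma~\ref{lemma:decrescent}), and the positive-definiteness argument (any other minimizer of $f$ on $U\cap\Cc$ would be a KKT point in $U$) is exactly what the paper needs as well. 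For (iii) you arrive at the same Jacobian $-PQ-\alpha(I-P)$ (Lemma~\ref{lem:jacobian}, obtained there via Jittorntrum's sensitivity result rather than your hands-on reduction to the active-set equality-constrained form), and your block-triangular decomposition along $\mathrm{im}\,P\oplus\ker P$ yields the same spectrum $\{-\alpha \text{ (mult. } r), -\lambda_{r+1},\dots,-\lambda_n\}$ as the paper's eigenvalue computation; both are correct, and your worry about twice differentiability at $x^*$ applies equally to the paper (the Hessians already appear in the second-order condition).

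Item (ii) is where you diverge, and where your proposal has a genuine gap — one you candidly flag yourself. Your cascade argument needs $x^*$ to be Lyapunov stable relative to $\real^n$ as an input to the reduction step, and you do not supply it: controlling the tangential drift along $\Cc$ during the infinite-time transient off $\Cc$ is precisely the hard part, not a detail. (The gap is closable by citing the full Seibert--Florio reduction theorem, which for a compact inner set $\{x^*\}$ requires only asymptotic stability of $\{x^*\}$ relative to $\Cc$ plus local asymptotic stability of $\Cc$, and does \emph{not} take ambient Lyapunov stability as a hypothesis; but as stated your reduction principle assumes what must be proved.) The paper instead builds a single Lyapunov function, and your own diagnosis of why $aV_{\mathrm{inf}}+(f-f(x^*))$ fails points directly at it: since the obstruction is that $f$ can increase at rate $O(\dist_\Cc(x))$ while the squared residual only decays at rate $O(\dist_\Cc(x)^2)$, the fix is to penalize the constraint violation to \emph{first} order. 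The paper uses the nonsmooth $\ell^1$ exact penalty $V_\epsilon(x)=f(x)+\tfrac1\epsilon\sum_i[g_i(x)]_+ +\tfrac1\epsilon\sum_j|h_j(x)|$ of \eqref{eq:exact-penalty}: by the VCBF inequalities each penalty term contributes $-\tfrac{\alpha}{\epsilon}g_i(x)$ (for $i\in I_+$) and $-\tfrac{\alpha}{\epsilon}|h_j(x)|$ to $\Dini_{\Gc_\alpha}V_\epsilon$, which for $\epsilon$ smaller than the reciprocal of a local bound on the multipliers (Lemma~\ref{lem:uniform-boundedness}) dominates the $+\alpha u^\top g+\alpha v^\top h$ terms and yields $\Dini_{\Gc_\alpha}V_\epsilon\le-\norm{\Gc_\alpha(x)}^2$; exactness of the penalty (requiring EMFCQ on $\bar U$, \cite[Theorem 4]{GdP-LG:89}) makes $x^*$ its unique local minimizer, and Lemma~\ref{lem:relative-stability} finishes the proof in one stroke. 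If you keep your cascade route, you must either prove the ambient Lyapunov stability directly or invoke the stronger reduction theorem explicitly.
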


We divide the technical discussion leading up to the proof of the
result in three parts, corresponding to each statement.

\subsubsection{Stability of Isolated Local Minimizers Relative to
  $\Cc$}\label{sec:relative-stability}

Here we analyze the stability of local minimizers relative to the
feasible set. We start by characterizing the growth of the objective
function along solutions of the safe gradient flow.

\begin{lemma}[Growth of objective function along safe gradient
  flow]\label{lemma:decrescent}
  Let $x \in \real^n$ such that ${\Lambda_\alpha(x)} \neq \emptyset$. Then,
  for all $(u, v) \in \Lambda_\alpha(x)$,
  \[
    \Dini_{{\Gc}_\alpha}f(x) = -\norm{{\Gc}_\alpha(x)}^2 + \alpha u^\top
    g(x) + \alpha v^\top h(x).
  \]
  and if $x \in \Cc$ then, 
  \[ \Dini_{\Gc_{\alpha}}f(x) \leq 0, \] 
  with equality if and only if $x \in \KKT$.
\end{lemma}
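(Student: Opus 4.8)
The plan is to trade the Dini derivative for an ordinary directional derivative and then exploit the KKT system~\eqref{eq:KKT_proj} that characterizes $\Gc_\alpha(x)$. Since $f$ is continuously differentiable, $\Dini_{\Gc_\alpha}f(x) = \nabla f(x)^\top \Gc_\alpha(x)$. Fix $(u,v) \in \Lambda_\alpha(x)$; by definition of $\Lambda_\alpha$ there is some $\xi$ with $(\xi, u, v)$ solving~\eqref{eq:KKT_proj}, and since~\eqref{eq:smooth-proj-grad} is strongly convex with a unique optimizer this forces $\xi = \Gc_\alpha(x)$. Using the stationarity condition~\eqref{eq:dlagrangian} to replace $\nabla f(x)$ and then the transversality condition~\eqref{eq:transversality} and the equality constraint~\eqref{eq:pequality} to rewrite the two cross terms $u^\top \pfrac{g(x)}{x}\Gc_\alpha(x)$ and $v^\top \pfrac{h(x)}{x}\Gc_\alpha(x)$ in terms of $g(x)$ and $h(x)$, a short computation gives $\Dini_{\Gc_\alpha}f(x) = -\norm{\Gc_\alpha(x)}^2 + \alpha u^\top g(x) + \alpha v^\top h(x)$. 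In particular this shows that the quantity $u^\top g(x) + v^\top h(x)$ does not depend on which multiplier pair in $\Lambda_\alpha(x)$ is chosen.

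For the second assertion I would first note that MFCQ at $x \in \Cc$ guarantees $\Lambda_\alpha(x) \neq \emptyset$ (by Lemma~\ref{lem:necessity-optimality}), so pick any $(u,v)$ in it. Since $x \in \Cc$, $g(x) \leq 0$ and $h(x) = 0$; since $\Lambda_\alpha(x) \subset \real_{\geq 0}^m \times \real^k$, $u \geq 0$. Hence $\alpha u^\top g(x) \leq 0$ and $\alpha v^\top h(x) = 0$, and the identity collapses to $\Dini_{\Gc_\alpha}f(x) = -\norm{\Gc_\alpha(x)}^2 + \alpha u^\top g(x) \leq -\norm{\Gc_\alpha(x)}^2 \leq 0$.

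The equality characterization then splits into two implications. If $\Dini_{\Gc_\alpha}f(x) = 0$, the two nonpositive summands $-\norm{\Gc_\alpha(x)}^2$ and $\alpha u^\top g(x)$ must each vanish, so in particular $\Gc_\alpha(x) = 0$, and Proposition~\ref{prop:equilibria}(i) (applicable since MFCQ holds at $x \in \Cc$) yields $x \in \KKT$. Conversely, if $x \in \KKT$, Proposition~\ref{prop:equilibria}(i) gives $\Gc_\alpha(x) = 0$, while Proposition~\ref{prop:equilibria}(ii) identifies $(u,v) \in \Lambda_\alpha(x)$ with a Lagrange multiplier pair of~\eqref{eq:opt} at $x$, so complementary slackness in~\eqref{eq:KKT_opt} gives $u^\top g(x) = 0$; together with $h(x) = 0$ the identity yields $\Dini_{\Gc_\alpha}f(x) = 0$.

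There is essentially no analytic difficulty here, so the main thing to get right is the bookkeeping in the first step: ensuring the stationarity, transversality, and equality conditions are all applied to the \emph{same} triple $(\Gc_\alpha(x), u, v)$, which is why the uniqueness of the optimizer of the strongly convex program~\eqref{eq:smooth-proj-grad} is invoked, and correctly pulling complementary slackness from Proposition~\ref{prop:equilibria} for the equality case rather than re-deriving it.
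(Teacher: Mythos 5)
Your proposal is correct and follows essentially the same route as the paper: substitute the stationarity condition~\eqref{eq:dlagrangian} into $\nabla f(x)^\top \Gc_\alpha(x)$, simplify the cross terms via~\eqref{eq:pequality} and~\eqref{eq:transversality}, and then use feasibility, $u \geq 0$, and Proposition~\ref{prop:equilibria} for the sign and equality claims. The only cosmetic difference is in the converse of the equality case, where the paper simply observes that $\Gc_\alpha(x)=0$ forces $\Dini_{\Gc_\alpha}f(x)=\Gc_\alpha(x)^\top\nabla f(x)=0$ directly, whereas you route through complementary slackness; both are valid.
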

\begin{proof}
  For $x \in X$ (with $X$ given by Proposition~\ref{prop:Lipschitz})
  such that $(u, v) \in \Lambda_\alpha(x) \neq \emptyset$,
  $({\Gc}_\alpha(x), u, v)$ solves~\eqref{eq:KKT_proj}. Next,
  \begin{align*}
    \Dini_{\Gc_{\alpha}}
    f(x) &=  \Gc_\alpha(x)^\top \nabla f(x)
    \\ 
         &\overset{(a)}{=} -{\Gc}_\alpha(x)^\top \Big({\Gc}_\alpha(x) +
           \pfrac{g(x)}{x}^\top u + \pfrac{h(x)}{x}^\top
           v\Big)
    \\ 
         &\overset{(b)}{=} -\norm{{\Gc}_\alpha(x)}^2  + \alpha u^\top g(x) + \alpha v^\top h(x), 
  \end{align*} 
  where (a) follows by rearranging \eqref{eq:dlagrangian}, 
  and (b) follows from~\eqref{eq:pequality}
  and~\eqref{eq:complementarity}. 

  To show the second statement, note that if $x\in \Cc$, 
  then $g(x) \leq 0$ and $h(x) = 0$ .
  Since $u \geq 0$, it follows~$\alpha u^\top g(x) + \alpha v^\top h(x) \leq 0$
  and therefore
  $\Dini_{\Gc_{\alpha}}f(x) \leq -\norm{{\Gc}_\alpha(x)}^2$. 
  Finally,
  $\Dini_{\Gc_{\alpha}}f(x) = 0$ if and only if ${\Gc}_\alpha(x) = 0$, which
  by Proposition~\ref{prop:equilibria}, is equivalent to $x \in \KKT$.
\end{proof}

As a consequence of Lemma~\ref{lemma:decrescent}, the objective
function decreases monotonically along the solutions starting in
$\Cc$. Thus, the objective function can be used as a Lyapunov 
function certifying asymptotic stability of an isolated equilibria 
relative to~$\Cc$.

\begin{proof}[Proof of Theorem~\ref{thm:stability}\ref{item:relative-stability}]
  By hypothesis and using Lemma~\ref{lem:necessity-optimality},
  ${\Lambda_\alpha}(x) \neq \emptyset$
  for all $x \in U \cap \Cc$. 
  Because $x^*$ is the unique strict
  minimizer of $f$ on $U \cap \Cc$, and by
  Lemma~\ref{lemma:decrescent}, $\Dini_{{\Gc}_\alpha}f(x) < 0$ for all
  $x \in U \cap\,\Cc \setminus \{x^* \}$, it follows 
  from Lemma~\ref{lem:relative-stability} that $x^*$ is asymptotically 
  stable relative to $\Cc$. 
\end{proof}

\subsubsection{Stability of Isolated Local Minimizers Relative to
  $\real^n$}

Here we establish the asymptotic stability of isolated local minima
relative to $\real^n$. To do so, we cannot rely any more on the
objective function $f$ as a Lyapunov function. This is because outside
of $\Cc$, there may exist points $x \in \real^n \setminus \Cc$ where
$f(x) < f(x^*)$. Therefore, to show stability relative to $\real^n$,
we need to identify an alternative function whose unconstrained
minimizer is~$x^*$. In fact, the problem of finding a function whose
unconstrained minimizers correspond to the local minimizers of a
nonlinear program is well studied in the optimization
literature~\cite{GdP-LG:89}: such functions are called \emph{exact
  penalty functions}. Our discussion proceeds by constructing an exact
penalty function that is also a Lyapunov function for the safe
gradient flow.

Let $\Omega \subset \real^n$ be a compact set.  A function
$V:\Omega \times (0, \infty) \to \real$ is a \emph{strong exact
  penalty function} relative to $\Omega$ if there exists
$\epsilon^* > 0$ such that for all $0 < \epsilon < \epsilon^*$,
$x^* \in \textnormal{int}(\Omega)$ is a local minimizer of
$V_\epsilon(x) := V(x, \epsilon)$ if and only if $x^*$ is a local
minimizer of \eqref{eq:opt}.  The following result gives a strong
exact penalty function for~\eqref{eq:opt}.

\begin{lemma}[Existence of strong exact penalty
    function]\label{lem:strong-exact-penalty}
  Let $\Omega \subset \real^n$ be compact such that
  $\textnormal{int}(\Omega) \cap \Cc \neq
  \emptyset$. Suppose~\eqref{eq:opt} satisfies EMFCQ at every
  $x \in \Omega$ and let  $V:\Omega \times (0, \infty) \to \real$,
  \begin{equation}
    \label{eq:exact-penalty}
    V(x, \epsilon) = f(x) + \frac{1}{\epsilon}\sum_{i=1}^{m}[g_i(x)]_+
    + \frac{1}{\epsilon}\sum_{j = 1}^{k}|h_j(x)|. 
  \end{equation}
  Then, $V$ is a strong exact penalty function relative to $\Omega$,
  $V$ is directionally differentiable on $\Omega$, and
  \[
    \begin{aligned}
      &\Dini_{{\Gc}_\alpha}V_\epsilon(x) = \Dini_{{\Gc}_\alpha}f(x) \notag
      \\
      &+ \frac{1}{\epsilon}\sum_{i \in I_+(x)}\Dini_{{\Gc}_\alpha}g_i(x)
        +\frac{1}{\epsilon}\sum_{j=1}^{k}\sgn(h_j(x))\Dini_{{\Gc}_\alpha}h_j(x), 
    \end{aligned}
  \]
  for all $x \in \Omega$.
\end{lemma}
\begin{proof}
  The fact that $V$ is a strong exact penalty function relative to
  $\Omega$ readily follows from~\cite[Thm. 4]{GdP-LG:89}.
  From~\cite[Prop. 3]{GdP-LG:89}, $V_\epsilon$ is directionally
  differentiable on $\Omega$ and its directional derivative in the
  direction~$\xi \in \real^n$~is
  \begin{align*}
    & V'_\epsilon
      (x; \xi) = \nabla f(x)^\top \xi
    \\
    &+\frac{1}{\epsilon} \sum_{i\in I_+(x)} \nabla g_i(x)^\top \xi +
      \frac{1}{\epsilon}\sum_{i \in I_0(x)}[\nabla g_i(x)^\top \xi]_+
    \\
    &+ \frac{1}{\epsilon}\sum_{
      \begin{subarray}{c}
        j \text{ such that}
        \\
        h_j(x) \neq 0
      \end{subarray}
    } \sgn(h_j(x))\nabla
    h_j(x)^\top \xi + \frac{1}{\epsilon}\sum_{
    \begin{subarray}{l}
      j \text{ such that}
      \\
      h_j(x) = 0
    \end{subarray}
    }|\nabla
    h_j(x)^\top \xi|.
  \end{align*} 
  We examine this expression in the case
  $V'_\epsilon(x; \Gc_\alpha(x)) = \Dini_{{\Gc}_\alpha}V_\epsilon(x)$.  For
  any $1 \leq i \leq m$, the definition of ${\Gc}_\alpha$ implies
  \[
    \nabla g_i(x)^\top \Gc_\alpha(x) = \Dini_{{\Gc}_\alpha}g_i(x) \leq -\alpha
    g_i(x) .
  \]
  Therefore, if $i \in I_0(x)$, then
  $[\nabla g_i(x)^\top \Gc_\alpha(x)]_+ = 0$. Similarly, for any
  $1 \leq j \leq k$, the definition of ${\Gc}_\alpha$ implies that
  \[ 
    \nabla h_j(x)^\top \Gc_\alpha(x) = \Dini_{{\Gc}_\alpha} h_j(x) = -\alpha
  h_j(x),
  \] 
  so if $h_j(x) = 0$, then
  $|\nabla h_j(x)^\top \Gc_\alpha(x)| = 0$, and the result follows.
\end{proof}

We now show that $V_\epsilon$ is a Lyapunov function for $\epsilon$
sufficiently small and use this fact to certify the asymptotic
stability of isolated local minimizers.

\begin{proof}[Proof of
  Theorem~\ref{thm:stability}\ref{item:aymptotic-stability}]
  Assume, without loss of generality, that $U$ 
  is bounded. By Lemma~\ref{lem:strong-exact-penalty}, the function $V_\epsilon$ defined
  in~\eqref{eq:exact-penalty} is a strong exact penalty relative
  to~$\overline{U}$. By definition, this means that there exists
  $\epsilon_1 > 0$ such that when $\epsilon < \epsilon_1$, $x^*$ is
  the only minimizer of $V_\epsilon$ in $U$.  Let $x \in U$ and
  $(u, v) \in \Lambda_\alpha(x)$. Then, using Lemmas~\ref{lemma:decrescent}
  and~\ref{lem:strong-exact-penalty} and the definition of ${\Gc}_\alpha$,
  we have
  \[
    \begin{aligned}
      \Dini_{{\Gc}_\alpha}V_\epsilon(x) \leq &-\norm{{\Gc}_\alpha(x)}^2 + \alpha
      u^\top g(x) + \alpha v^\top h(x)
      \\
      &-\frac{1}{\epsilon} \sum_{i \in I_+(x)}\alpha g_i(x)
      -\frac{1}{\epsilon} \sum_{j=1}^{k}\alpha |h_j(x)|.
    \end{aligned}
  \]
  Let $I_{\leq 0} = I_0(x) \cup I_-(x)$. It follows that,
  \[
  \begin{aligned}
    &\Dini_{{\Gc}_\alpha}V_\epsilon(x) \leq -\norm{{\Gc}_\alpha(x)}^2 +
    \alpha\sum_{i \in I_{\leq 0}}u_ig_i(x)\;+
    \\
    &\alpha\sum_{i \in I_+(x)}\Big( u_i - \frac{1}{\epsilon} \Big)g_i(x)
    + \alpha\sum_{j=1}^{k}\Big(|v_j| -
    \frac{1}{\epsilon}\Big)|h_j(x)|.
  \end{aligned}
  \] 
  Next, choose $0 < \epsilon_2 < \frac{1}{B}$ 
  where $B > 0$ satisfies the bound given by 
  Lemma~\ref{lem:uniform-boundedness}.
  Then, for $\epsilon < \epsilon_2$,
  \[
  \begin{aligned}
    &\sum_{i \in I_+(x)}\Big( u_i - \frac{1}{\epsilon}
    \Big)g_i(x)+\sum_{j=1}^{k}\Big(|v_j| -
    \frac{1}{\epsilon}\Big)|h_j(x)| < 0.
  \end{aligned} 
  \]
  Finally, since $u \geq 0$, we have $\alpha \sum_{i \in I_{\leq 0}}u_ig_i(x) \leq 0$. Thus,
  \[ \Dini_{{\Gc}_\alpha}V_\epsilon(x) \leq -\norm{{\Gc}_\alpha(x)}^2 < 0, \]
  for all $x \in U \setminus \{x^*\}$, whenever 
  $\epsilon < \min\{\epsilon_1, \epsilon_2 \}$. Therefore $V_\epsilon$ is a
  Lyapunov function on $U$ and asymptotic stability of $x^*$ 
  relative to $\real^n$ follows
  by Lemma~\ref{lem:relative-stability}.
\end{proof}

\begin{remark}[Relationship to merit functions in numerical
  optimization]
{\rm In numerical optimization, the $\ell^1$ penalty function
  in~\eqref{eq:exact-penalty} is often used as a \emph{merit
    function}, i.e., a function that quantifies how well a single
  iteration of an optimization algorithm balances the two goals of
  reducing the value of the objective function and reducing the
  constraint violation (cf. \cite[Sec. 15.4]{JN-SW:06}). Typically,
  the stepsize on each iteration is chosen so that the merit function
  is nonincreasing. Thus, if the algorithm is viewed as a
  discrete-time dynamical system, the merit function is a Lyapunov
  function.  The $\ell^1$ penalty plays a similar role for the
  continuous-time system described here.}  \oprocend
\end{remark}

\subsubsection{Exponential Stability of Isolated Local Minimizers}

We now discuss the exponentially stability of isolated local
minimizers. Our first step is to identify conditions under which the
safe gradient flow is differentiable. To do so, we introduce the
notions of strict complementarity and second-order condition on the
optimization problem.

\begin{definition}[Strict complementarity and second-order sufficient
  conditions]
  Let $(x^*, u^*, v^*)$ be a KKT triple of~\eqref{eq:opt}.
  \begin{itemize}
  \item The strict complementarity condition holds if $u^*_i > 0$ for
    all $i \in I_0(x^*)$;
  \item The second-order sufficient condition holds if $z^\top Q z> 0$
    for all
    $z \in \ker \pfrac{g_{I_0}(x^*)}{x} \cap \ker \pfrac{h(x^*)}{x}$, where
    \begin{equation}
      \label{eq:Q}
      Q \!=\! \nabla^2 f(x^*) \!+\! \sum_{i = 1}^{m}u_i^* \nabla^2 g_i(x^*) \!+\!
      \sum_{j =1 }^{k}v_i^* \nabla^2 h_j(x^*). 
    \end{equation}
  \end{itemize}
\end{definition}

When LICQ holds, strict complementarity together with the second-order
sufficient condition can be used to establish the differentiability of
a KKT triple of a nonlinear parametric program with respect to the
parameters \cite{AVF:76}. When these conditions are satisfied
by~\eqref{eq:opt}, we show next that $\Gc_\alpha$ is
differentiable and provide an expression for its Jacobian. 
A step-by-step computation of the Jacobian is in Appendix~\ref{ap:jacobian}. 

\begin{lemma}[Jacobian of safe gradient flow]
  \label{lem:jacobian}
  Let $x^* \in \KKT$ and $(u^*, v^*)$ be the associated Lagrange
  multipliers. Suppose
  \begin{itemize}
    \item LICQ holds at $x^*$;
    \item $(x^*, u^*, v^*)$ satisfies the strict complementarity condition;
    \item $(x^*, u^*, v^*)$ satisfies the second-order sufficient condition.
  \end{itemize}
  Then $\Gc_{\alpha}$ is differentiable at $x^*$ and 
  \begin{equation}
    \label{eq:jac-G}
    \pfrac{{\Gc}_\alpha(x^*)}{x} = -PQ - \alpha(\identity - P),
  \end{equation}
  \vspace{-0.2ex}where $\identity$ is the $n \times n$ identity matrix, $P$ is the orthogonal
  projection matrix onto
  $\ker \pfrac{g_{I_0}(x^*)}{x} \cap \ker \pfrac{h(x^*)}{x}$, and $Q$
  is defined in~\eqref{eq:Q}.
\end{lemma}
\begin{proof}
  By Proposition~\ref{prop:strong-regularity}, there is a neighborhood
  $U$ of~$x^*$ where the unique KKT triple
  of~\eqref{eq:smooth-proj-grad} corresponding to~$x \in U$
  is~$({\Gc}_\alpha(x), u(x), v(x))$.  From the proof of that result, LICQ
  and the second-order sufficient condition hold for
  \eqref{eq:smooth-proj-grad} at $x^*$.  Further, the indices of the
  active constraints of \eqref{eq:opt} are the same as those of
  \eqref{eq:smooth-proj-grad}. Because $(x^*, u^*, v^*)$ satisfies the
  strict complementarity condition for~\eqref{eq:opt}, $(0, u^*, v^*)$
  satisfies the strict complementarity condition
  for~\eqref{eq:smooth-proj-grad}.  Thus, by~\cite[Cor.
  1]{KJ:84}, ${\Gc}_\alpha$ is continuously differentiable at $x^*$, 
  and, by following the steps in Appendix \ref{ap:jacobian}, we obtain 
  \eqref{eq:jac-G}. 
\end{proof}

Using the result in Lemma~\ref{lem:jacobian}, stability of an isolated
local minimizer can be inferred by showing that the eigenvalues of the
Jacobian of the safe gradient flow are all strictly negative.

\begin{proof}[Proof of
  Theorem~\ref{thm:stability}\ref{item:exponential-stability}]
  By the second-order sufficient condition, $z^\top PQP z>0$ for all
  $z \in \text{im }P \setminus \{0 \}$. It follows that 
  $PQPz = 0$ if and only if
  $z \in \ker P$.  Therefore $0$ is an eigenvalue of $PQP$ with
  multiplicity $r$ and $PQP$ has $n - r$ strictly positive
  eigenvalues, where
    $r = \dim \ker P$.
  Let $z_1, \dots, z_r$ be the eigenvectors corresponding to the zero
  eigenvalues, and $z_{r + 1}, \dots, z_n$ be eigenvectors
  corresponding to the positive eigenvalues, denoted
  $\lambda_{r + 1}, \dots, \lambda_n$.  Then
  \[
    \begin{aligned}
      Pz_i = \begin{cases} 0 \qquad &i=1, \dots, r ,
        \\
        z_i \qquad &i=r+1, \dots, n .
      \end{cases}
    \end{aligned}
  \]
  Let
  \[
    \mu_i =
    \begin{aligned}
      \begin{cases}
        0 \qquad &i=1, \dots, r,
        \\
        \lambda_i - \alpha \qquad &i=r + 1, \dots, n .
      \end{cases}\end{aligned}
  \]
  Then, it follows that $(PQP - \alpha P)z_i = \mu_i z_i$ for all
  $1 \leq i \leq n$. Observe that
  $PQP - \alpha P = (PQ - \alpha \identity)P$ has precisely the same
  eigenvalues as $P(PQ - \alpha \identity) = PQ - \alpha P$.
  Therefore, since $\mu_i$ is an eigenvalue of $PQ - \alpha P$, it
  follows that $\mu_i + \alpha$ is an eigenvalue of
  \[
    PQ - \alpha P + \alpha \identity = PQ + \alpha (\identity - P) =
    -\pfrac{{\Gc}_\alpha(x^*)}{x}.
  \]
  Hence the eigenvalues of $\pfrac{{\Gc}_\alpha(x^*)}{x}$ are
  \[
    \{ -\alpha, -\alpha, \dots, -\alpha, -\lambda_{r + 1}, \dots,
    -\lambda_{n} \},
  \]
  where $-\alpha$ appears with multiplicity $r$. Since all the
  eigenvalues are strictly negative, $x^*$ is exponentially stable.
\end{proof}

\subsection{Stability of Nonisolated Local Minimizers}

We have characterized in
Section~\ref{sec:stability-isolated-local-min} the stability under the
safe gradient flow of local minimizers that are isolated KKT
points. In general, if $x^*$ is strict local minimizer that is not an
isolated KKT point (for example, if there are an infinite number of
local maximizers arbitrarily close to $x^*$, cf. \cite[page
5]{PAA-KK:06}), or if $x^*$ is only a local minimizer, then there are
no guarantees on Lyapunov stability. However, as we show here,
nonisolated minimizers are stable under the safe gradient flow under
additional assumptions on the problem data.

When there are no constraints, the safe gradient flow reduces to the
classical gradient flow, where conditions for semistability of local
minimizers are well known: if the objective function is real-analytic,
then all trajectories of the gradient flow have finite arclength,
cf.~\cite{SJ:83}, in which case the objective function can be used to
construct an arclength-based Lyapunov function satisfying the
hypotheses of Lemma~\ref{lem:cont-stability} to establish
semistability. In this section, we conduct a similar analysis for the
constrained case. Our main result is as follows.

\begin{theorem}[Stability of nonisolated local
    minima]\label{thm:non-isolated}
  Consider the optimization problem~\eqref{eq:opt}, and assume $f$,
  $g$ and $h$ are real-analytic. Let $\Sc$ be a bounded set of local
  minimizers on which $f$ is constant and equal to $f^*$ such that
  \begin{enumerate}
  \item There is an open set $U$ and $\beta > 0$ such that
    $U \cap \KKT = \Sc$ and $f(x) - f^* \geq \beta\dist_\Sc(x)^2$ for
    all $x \in U \cap \Cc$;
    \label{item:weak-sharp}
  \item LICQ is satisfied at all $x^* \in \Sc$;
  \item $T_\Sc(x^*) \cap \proxnormal_\Sc(x^*) = \{ 0\}$ for all $x^* \in \Sc$. 
  \end{enumerate}
  Then there is $\alpha^* > 0$ such that every $x^* \in \Sc$ is
  semistable relative to $\real^n$ under the safe gradient flow
  ${\Gc}_\alpha$, for $\alpha > \alpha^*$.
\end{theorem}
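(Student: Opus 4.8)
The plan is to verify the hypotheses of the angle-condition semistability test, Lemma~\ref{lem:cont-stability}: on a neighborhood of $\Sc$ I will exhibit a Lyapunov function $V$ for $\dot x = {\Gc}_\alpha(x)$ that is positive definite with respect to $\Sc$, dissipates at a rate controlled by $\|{\Gc}_\alpha\|^2$, vanishes (as a derivative) only on $\Sc$, satisfies a Kurdyka--{\L}ojasiewicz inequality, and, together with condition~(iii), enjoys an angle condition relating $\Dini_{{\Gc}_\alpha}V$ to $\|{\Gc}_\alpha\|$ and the gradient-like data of $V$. Granting these, Lemma~\ref{lem:cont-stability} yields that trajectories starting near $\Sc$ converge with finite arclength to a single point of $\Sc$ and that each point of $\Sc$ is Lyapunov stable, which is exactly semistability relative to $\real^n$.

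The first step is to promote the pointwise LICQ hypothesis on $\Sc$ to a constraint qualification on a neighborhood. Since for $x^*\in\Sc$ the active gradients $\{\nabla g_i(x^*)\}_{i\in I_0(x^*)}\cup\{\nabla h_j(x^*)\}_{j}$ are linearly independent and, for $x$ near $x^*$, one has $I_0(x)\cup I_+(x)\subseteq I_0(x^*)$, continuity of the gradients forces linear independence --- hence EMFCQ --- to persist on an open set; by boundedness of $\Sc$ this produces a compact $\Omega$ with $\Sc\subset\Int(\Omega)$ on which EMFCQ holds and, shrinking $U$, $\Omega\cap\KKT=\Sc$. Lemma~\ref{lem:strong-exact-penalty} then applies with the $\ell^1$ exact penalty $V_\epsilon$ of~\eqref{eq:exact-penalty}, and, repeating the estimate in the proof of Theorem~\ref{thm:stability}\ref{item:aymptotic-stability} (using Lemma~\ref{lem:uniform-boundedness} to bound the multipliers and picking $\epsilon$ small), one gets $\Dini_{{\Gc}_\alpha}V_\epsilon(x)\le-\|{\Gc}_\alpha(x)\|^2\le 0$ on $\Omega$. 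Positive definiteness of $V:=V_\epsilon-f^*$ with respect to $\Sc$ follows by combining assumption~\ref{item:weak-sharp}, which gives $V_\epsilon(x)-f^*\ge\beta\dist_\Sc(x)^2$ for $x\in U\cap\Cc$, with a lower bound on the penalty terms in the infeasible directions supplied by LICQ (the violations dominate $\dist_\Cc(x)$ up to a constant while $f$ is Lipschitz on $\Omega$), yielding $V_\epsilon(x)-f^*\ge c\,\dist_\Sc(x)^2$ near $\Sc$. Lyapunov stability of $\Sc$ as a set and attractivity of $\Sc$ for nearby initial conditions then follow from the invariance argument applied to $V_\epsilon$, using $\Dini_{{\Gc}_\alpha}V_\epsilon(x)=0\iff{\Gc}_\alpha(x)=0\iff x\in\KKT$ (Proposition~\ref{prop:equilibria}) and $\Omega\cap\KKT=\Sc$.

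The core of the argument is the finite-arclength estimate, where analyticity and condition~(iii) are used. Because $f$ is real-analytic and $\Sc$ is a compact subset of its critical set, a Kurdyka--{\L}ojasiewicz inequality holds uniformly on a neighborhood of $\Sc$; since $\Cc$ is forward invariant (Theorem~\ref{thm:feasible-safety}) and the constraint violations of any trajectory decay like $e^{-\alpha t}$, the tail of a trajectory converging to $\Sc$ stays within an arbitrarily thin neighborhood of $\Cc$, where $V_\epsilon=f$, so the K{\L} inequality transfers to $V_\epsilon$ there. The quantitative ingredient is the angle condition $\Dini_{{\Gc}_\alpha}V_\epsilon(x)\le-\kappa\,\|{\Gc}_\alpha(x)\|\,\|\nabla V_\epsilon(x)\|$ for $x$ near $\Sc$: writing ${\Gc}_\alpha=-\nabla f-\pfrac{g}{x}^\top u-\pfrac{h}{x}^\top v$ and using the conditioning of the active constraint Jacobians $\pfrac{g_{I_0}}{x}$ and $\pfrac{h}{x}$ guaranteed by LICQ, together with $\alpha>\alpha^*$ large so that ${\Gc}_\alpha(x)$ is uniformly close to $\Pi_{T_\Cc(x)}(-\nabla f(x))$ (cf.\ Proposition~\ref{prop:approx}) with a controlled normal correction, one checks that ${\Gc}_\alpha$ lies in a cone about $-\nabla f$ of aperture bounded away from $\pi/2$. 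Plugging the angle condition and the K{\L} inequality into $\tfrac{d}{dt}\big(V_\epsilon(x(t))-f^*\big)^\theta\le-c\,\|\dot x(t)\|$ bounds the arclength; condition~(iii), $T_\Sc(x^*)\cap\proxnormal_\Sc(x^*)=\{0\}$, is precisely what the appendix test needs to convert the quadratic lower bound $V_\epsilon-f^*\gtrsim\dist_\Sc^2$ and the angle condition into convergence to a single point of $\Sc$ (rather than drift along a nondegenerate arc of $\Sc$) and into Lyapunov stability of that limit point. Combining Lyapunov stability of each $x^*\in\Sc$, attractivity of $\Sc$, and finite arclength gives semistability relative to $\real^n$ for $\alpha>\alpha^*$.

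I expect the main obstacle to be the angle condition and, relatedly, controlling the infeasible portion of a trajectory: one must show that the detour outside $\Cc$ also has finite length --- plausibly via the $e^{-\alpha t}$ decay of the violation with $\alpha$ large --- and that the LICQ conditioning constants, the K{\L} exponent, $\kappa$, and $\alpha^*$ can all be chosen uniformly over the compact set $\Sc$, so that a single $\alpha^*$ works for every $x^*\in\Sc$.
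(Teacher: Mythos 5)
Your overall strategy---verify the hypotheses of the arclength/angle-condition test in the appendix for a suitable Lyapunov function on a neighborhood of $\Sc$---is the same as the paper's. But your choice of Lyapunov function, the $\ell^1$ exact penalty $V_\epsilon$ of~\eqref{eq:exact-penalty}, breaks down at the one step that carries all the weight: the angle condition. On $\Cc$ the penalty terms vanish, so $V_\epsilon = f$ and $\nabla V_\epsilon = \nabla f$ there, and the required inequality becomes $\nabla f(x)^\top \Gc_\alpha(x) \le -\kappa\,\norm{\Gc_\alpha(x)}\,\norm{\nabla f(x)}$. By Lemma~\ref{lemma:decrescent}, on $\Cc$ one only has $\nabla f(x)^\top \Gc_\alpha(x) \le -\norm{\Gc_\alpha(x)}^2$, so the angle condition would force $\norm{\Gc_\alpha(x)} \ge \kappa \norm{\nabla f(x)}$ near $\Sc$. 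This fails whenever a point of $\Sc$ has a nonzero multiplier: there $\nabla f(x^*) = -\pfrac{g(x^*)}{x}^\top u^* - \pfrac{h(x^*)}{x}^\top v^* \neq 0$, so $\norm{\nabla f}$ is bounded below near $x^*$ while $\Gc_\alpha(x) \to 0$. Geometrically, your claim that $\Gc_\alpha$ stays in a cone of aperture less than $\pi/2$ about $-\nabla f$ is false on the boundary of $\Cc$: $\Gc_\alpha$ is essentially the tangential component of $-\nabla f$, and at a constrained minimizer the tangential component vanishes while the normal component does not, so the angle tends to $\pi/2$. Your K{\L} step has the same root problem: $\Sc$ consists of \emph{constrained} minimizers, which are generically not critical points of $f$ as an unconstrained function, so ``K{\L} for real-analytic $f$ near its critical set'' does not apply to $f$ (or to $V_\epsilon$, which agrees with $f$ on $\Cc$) near $\Sc$.

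The paper sidesteps both obstacles by taking as Lyapunov function the \emph{value function} of the QP defining the flow, $W_\alpha(x) = \alpha f(x) + \nabla f(x)^\top \Gc_\alpha(x) + \tfrac{1}{2}\norm{\Gc_\alpha(x)}^2$ from~\eqref{eq:lyap-W}. Its gradient is $\nabla W_\alpha(x) = -(\alpha I - Q(x))\Gc_\alpha(x)$ (Lemma~\ref{lem:diff-W}), i.e.\ a linear image of the vector field itself, so once $\alpha > \alpha^* = \sup_{x^*\in\Sc}\rho(Q(x^*))$ makes $\alpha I - Q \succ 0$, the angle condition is immediate with constants $c_1/c_2$ given by uniform eigenvalue bounds; the K{\L} inequality follows because $W_\alpha$ is globally subanalytic as the value function of a subanalytic parametric program. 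Condition~(iii) of the theorem is not used to ``prevent drift along $\Sc$'' as you suggest, but rather, via the second-order necessary and sufficient conditions for weak sharp minima (Lemmas~\ref{lem:necessary-condition} and~\ref{lem:sufficient-condition}) applied to $W_\alpha''(x^*;d)$, to prove that the local minimizers of $W_\alpha$ in $U$ are exactly $\Sc$---the remaining hypothesis of Lemma~\ref{lem:angle-convergence}. Without replacing $V_\epsilon$ by something whose gradient is comparable to $\Gc_\alpha$ in both direction and magnitude, your argument cannot be completed.
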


To prove this result, we first discuss various intermediate
results. In particular, the growth condition in
Theorem~\ref{thm:non-isolated}(i) plays a crucial role in the
construction of a Lyapunov function to prove the result.  Any
$x^* \in \Sc$ satisfying this property is called a \emph{weak sharp
  minimizer of $f$ relative to $\Sc$}.  Weak sharp minimizers play an
important role in sensitivity analysis for nonlinear programs as well
as convergence analysis for numerical methods in
optimization~\cite{JVB-MCF:93,MS-DEW:99}.

We review second-order optimality conditions for weak sharp
minimizers.  Let $x^* \in \KKT$, suppose that LICQ holds at~$x^*$, and
let $(u^*, v^*)$ be the unique Lagrange multipliers of~\eqref{eq:opt}
associated to $x^*$. Define the index set of \emph{strongly active}
constraints as 
\[ I_0^+(x^*) = \{ 1 \leq i \leq m \,| \, u_i^* > 0 \}. \]
The \emph{critical cone} is
\begin{equation}
  \label{eq:descent-cone}
  \begin{aligned}
    \Gamma(x^*) & = \{ d \in \real^n \mid \nabla h_j(x^*)^\top d = 0,
    j=1, \dots k,
    \\
    & \qquad \nabla g_i(x^*)^\top d = 0, i \in I_0^+(x^*),
    \\
    & \qquad \nabla g_j(x^*)^\top d \leq 0, j \in I_0(x^*) \setminus
    I_0^+(x) \}.
  \end{aligned}
\end{equation}

\begin{lemma}[Second-order necessary condition for
    constrained weak sharp minima {\cite[Prop. 3.5]{MS-DEW:99}}] 
  \label{lem:necessary-condition}
  Consider~\eqref{eq:opt} and let $\Sc \subset \Cc$ be a set on which
  $f$ is constant.  Suppose that $x^* \in \partial \Sc$ is a weak
  sharp local minimizer of $f$ relative to $\Sc$ and LICQ is satisfied
  at $x^*$. Let $u^*, v^*$ be the Lagrange multipliers and define
  $ \ell(x) = f(x) + (u^*)^\top g(x) + (v^*)^\top h(x)$.  Then, there
  exists $\gamma > 0$ such that, for all $d \in \Gamma(x^*)$,
  \[
    \ell''(x^*; d) \geq \gamma \dist_{T_\Sc(x^*)}(d)^2.
  \]
\end{lemma}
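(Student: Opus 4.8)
The plan is to derive the inequality by evaluating the weak sharp growth estimate along a carefully constructed feasible parabolic arc through $x^*$. First I would record some reductions. Since $f, g, h$ are real-analytic, $\ell$ is $C^2$, so $\ell''(x^*; d) = d^\top \nabla^2 \ell(x^*) d$ by the remark after \eqref{eq:second-dd}; because $(x^*, u^*, v^*)$ is a KKT triple, stationarity gives $\nabla \ell(x^*) = 0$ and complementary slackness gives $\ell(x^*) = f(x^*) = f^*$. By LICQ at $x^*$, the set $M := \{ x \in \real^n \mid g_i(x) = 0 \text{ for } i \in I_0^+(x^*), \ h_j(x) = 0 \text{ for all } j\}$ is, near $x^*$, an embedded $C^2$ submanifold with tangent space $\hat N = \{ d \mid \nabla g_i(x^*)^\top d = 0 \ (i \in I_0^+(x^*)), \ \nabla h_j(x^*)^\top d = 0 \ (\text{all } j)\}$, and $\Gamma(x^*) \subseteq \hat N$. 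The crucial algebraic fact is that $f \equiv \ell$ on $M$, since for $x \in M$ one has $h(x) = 0$, $g_i(x) = 0$ for $i \in I_0^+(x^*)$, and $u_i^* = 0$ for $i \notin I_0^+(x^*)$, so that $(u^*)^\top g(x) + (v^*)^\top h(x) = 0$.

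Next, fix $d \in \Gamma(x^*)$ and construct a $C^2$ arc $x : [0, \epsilon) \to M \cap \Cc$ with $x(0) = x^*$ and $\dot x(0) = d$. Since $d \in \hat N = T_{x^*} M$, a $C^2$ curve on $M$ with these first-order data exists, and its acceleration $w := \ddot x(0)$ may be prescribed to be any element of the second-order tangent set, i.e., any $w$ with $\nabla g_i(x^*)^\top w = -d^\top \nabla^2 g_i(x^*) d$ for $i \in I_0^+(x^*)$ and $\nabla h_j(x^*)^\top w = -d^\top \nabla^2 h_j(x^*) d$ for all $j$. Along any such curve, the inactive constraints and the weakly active constraints $j \in I_0(x^*) \setminus I_0^+(x^*)$ with $\nabla g_j(x^*)^\top d < 0$ stay strictly negative for small $t > 0$ (using $d \in \Gamma(x^*)$), while for the weakly active $j$ with $\nabla g_j(x^*)^\top d = 0$ one has $\frac{d^2}{dt^2} g_j(x(t))\big|_{t=0} = d^\top \nabla^2 g_j(x^*) d + \nabla g_j(x^*)^\top w$. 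This is where I would use the \emph{full} LICQ hypothesis: since $\{\nabla g_i(x^*)\}_{i \in I_0(x^*)} \cup \{\nabla h_j(x^*)\}_j$ are linearly independent, the linear map $w \mapsto \big((\nabla g_i(x^*)^\top w)_{i \in I_0(x^*)}, (\nabla h_j(x^*)^\top w)_j\big)$ is onto, so $w$ can be chosen in the second-order tangent set with $\nabla g_j(x^*)^\top w < -d^\top \nabla^2 g_j(x^*) d$ for all these $j$, which forces $g_j(x(t)) < 0$ for small $t > 0$. After shrinking $\epsilon$ this makes $x(t) \in \Cc$ throughout, and then $f \equiv \ell$ on $M$ together with $\nabla \ell(x^*) = 0$ and $\ell(x^*) = f^*$ yields
\[
f(x(t)) = \ell(x(t)) = f^* + \tfrac{t^2}{2}\, d^\top \nabla^2 \ell(x^*) d + o(t^2) = f^* + \tfrac{t^2}{2}\, \ell''(x^*; d) + o(t^2).
\]

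Next I would establish the first-order distance estimate $\liminf_{t \to 0^+} \dist_\Sc(x(t))/t \geq \dist_{T_\Sc(x^*)}(d)$. For small $t > 0$, choose $y_t \in \Sc$ with $\|x(t) - y_t\| \leq \dist_\Sc(x(t)) + t^2$; since $x^* \in \Sc$ we have $\dist_\Sc(x(t)) \leq \|x(t) - x^*\| \to 0$, hence $y_t \to x^*$ and $e_t := (y_t - x^*)/t$ is bounded. Along any $t_k \to 0^+$, pass to a subsequence with $e_{t_k} \to e$; then $y_{t_k} \in \Sc$, $y_{t_k} \to x^*$ and $(y_{t_k} - x^*)/t_k \to e$ show $e \in T_\Sc(x^*)$, while $\dist_\Sc(x(t_k))/t_k \geq \|(x(t_k)-x^*)/t_k - e_{t_k}\| - t_k \to \|d - e\| \geq \dist_{T_\Sc(x^*)}(d)$, which proves the claim. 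Combining the arc expansion with the weak sharp estimate $f(x(t)) - f^* \geq \beta \dist_\Sc(x(t))^2$, for any $\delta \in (0, \dist_{T_\Sc(x^*)}(d))$ and all sufficiently small $t > 0$ one gets
\[
\tfrac{t^2}{2}\, \ell''(x^*; d) + o(t^2) \;\geq\; \beta\, \dist_\Sc(x(t))^2 \;\geq\; \beta\, t^2 \big(\dist_{T_\Sc(x^*)}(d) - \delta\big)^2 ,
\]
and dividing by $t^2$, letting $t \to 0^+$ and then $\delta \to 0^+$, gives $\ell''(x^*; d) \geq 2\beta\, \dist_{T_\Sc(x^*)}(d)^2$ (the case $\dist_{T_\Sc(x^*)}(d) = 0$ is immediate, as the left-hand side is $\geq 0$). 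Since $\beta$ is independent of $d$, the claim holds with $\gamma = 2\beta$.

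The hard part is the arc construction: one must produce a $C^2$ curve that simultaneously stays on $M$ — so that $f$ coincides with $\ell$ along it and the second-order behavior is governed by $\nabla^2 \ell(x^*)$ — and remains feasible for the weakly active inequality constraints, which is exactly the step that requires LICQ on the \emph{entire} active set rather than only on the strongly active and equality constraints. A tidier but essentially equivalent route would be to introduce a local $C^2$ chart of $M$ by the implicit function theorem and reduce the feasibility check for the weakly active constraints to the surjectivity of the active-constraint Jacobian.
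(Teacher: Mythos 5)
Your argument is correct. Note that the paper does not prove this lemma at all: it is imported verbatim from Studniarski and Ward (the citation \cite[Proposition 3.5]{MS-DEW:99} in the statement), so there is no in-paper proof to compare against. What you have written is a valid self-contained derivation along the classical lines of second-order necessary conditions: the Lagrangian trick ($\nabla\ell(x^*)=0$ kills the first-order term and the dependence on the chosen acceleration, while $f\equiv\ell$ on the strongly-active manifold $M$ transfers the growth of $f$ to the Hessian of $\ell$), a feasible $C^2$ parabolic arc whose existence and feasibility for the weakly active constraints is exactly where the full LICQ surjectivity is used, and the lower bound $\liminf_{t\to 0^+}\dist_\Sc(x(t))/t\geq\dist_{T_\Sc(x^*)}(d)$, which correctly converts the weak-sharp growth $f(x(t))-f^*\geq\beta\dist_\Sc(x(t))^2$ into $\ell''(x^*;d)\geq 2\beta\,\dist_{T_\Sc(x^*)}(d)^2$ with a constant independent of $d$. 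Two small points worth making explicit if you write this up: the statement implicitly requires $g,h$ (and $f$) to be $C^2$ so that $M$ is a $C^2$ manifold and $\ell''(x^*;d)=d^\top\nabla^2\ell(x^*)d$ — this holds in the real-analytic setting of Theorem~\ref{thm:non-isolated} where the lemma is invoked, but is not literally part of the hypotheses as stated; and the existence of a $C^2$ curve on $M$ with prescribed velocity $d\in T_{x^*}M$ and prescribed admissible acceleration, which you defer to a chart argument, should be carried out (it is routine via the implicit function theorem, but it is the only nontrivial construction in the proof).
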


\begin{lemma}[Second-order sufficient condition for
    unconstrained weak sharp minima {\cite[Thm. 2.5]{MS-DEW:99}}]
    \label{lem:sufficient-condition}
  Consider $W:\real^n \to \real$ and suppose 
  that $W$ is constant on $\Sc$. Suppose $x^* \in \partial \Sc$ and  
  $W''(x^*; d) > 0$ 
  for all $d \in \proxnormal_\Sc(x^*) \setminus \{ 0 \}$, 
  then $x^*$ is a weak sharp local minimizer of $W$ relative to~$\Sc$. 
\end{lemma}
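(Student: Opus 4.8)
The plan is to argue by contradiction: from a hypothetical failure of weak sharpness I would extract a nonzero proximal normal direction at $x^*$ and contradict the positivity of $W''(x^*;\cdot)$ along it. Write $W^*$ for the constant value of $W$ on $\Sc$ (hence, by continuity, on $\overline{\Sc}$; in particular $W(x^*)=W^*$ since $x^*\in\partial\Sc\subset\overline{\Sc}$). Suppose $x^*$ is not a weak sharp local minimizer of $W$ relative to $\Sc$. Negating the defining property with the neighborhood $B(x^*,1/\nu)$ and constant $1/\nu$ yields, for each $\nu\in\intpos$, a point $x^\nu$ with $\norm{x^\nu-x^*}<1/\nu$ and $W(x^\nu)-W^*<\tfrac{1}{\nu}\,\dist_\Sc(x^\nu)^2$. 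Since $W\equiv W^*$ on $\overline{\Sc}$, this strict inequality forces $x^\nu\notin\overline{\Sc}$, so $t^\nu:=\dist_\Sc(x^\nu)>0$.

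Next I would extract the normal direction. Choose $y^\nu\in\Pi_\Sc(x^\nu)$; then $(x^\nu,y^\nu)\in\graph(\Pi_\Sc)$, $\norm{x^\nu-y^\nu}=t^\nu$, and $d^\nu:=(x^\nu-y^\nu)/t^\nu$ is a unit vector. Since $x^*\in\overline{\Sc}$ we have $t^\nu\le\norm{x^\nu-x^*}\to 0$, whence $y^\nu\to x^*$ as well. Passing to a subsequence, $d^\nu\to d$ with $\norm{d}=1$; in particular $d\ne 0$, and directly from the definition of the proximal normal cone (with the sequences $\{t^\nu\}$ and $\{(x^\nu,y^\nu)\}$) we obtain $d\in\proxnormal_\Sc(x^*)$. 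The hypothesis then gives $W''(x^*;d)>0$.

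The core computation is a second-order expansion of $W$ along the sequence. Writing $x^\nu=\tilde{y}^\nu+t^\nu d$ with $\tilde{y}^\nu:=y^\nu+t^\nu(d^\nu-d)$, one has $(t^\nu,\tilde{y}^\nu)\to(0^+,x^*)$, so the definition~\eqref{eq:second-dd} of $W''(x^*;d)$ applies and yields
\[
  W(x^\nu)-W^* = t^\nu\,W'(\tilde{y}^\nu;d) + (t^\nu)^2\big(W''(x^*;d)+o(1)\big).
\]
The decisive point is that the first-order contribution is negligible at second order: $\tilde{y}^\nu$ lies within $t^\nu\norm{d^\nu-d}=o(t^\nu)$ of the projection point $y^\nu\in\overline{\Sc}$, on which $W$ is constant, so that $W'(y;\cdot)$ annihilates the tangent cone $T_\Sc(y)$ for $y$ near $x^*$ in $\overline{\Sc}$; combined with the fact that $d^\nu$ is asymptotically a normal direction at $y^\nu$ because $y^\nu\in\Pi_\Sc(x^\nu)$, one gets $W'(\tilde{y}^\nu;d)=o(t^\nu)$ and hence $t^\nu W'(\tilde{y}^\nu;d)=o((t^\nu)^2)$. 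Therefore $\dfrac{W(x^\nu)-W^*}{(t^\nu)^2}\to W''(x^*;d)>0$, which contradicts $\dfrac{W(x^\nu)-W^*}{(t^\nu)^2}<\dfrac{1}{\nu}\to 0$. The contradiction establishes that $x^*$ is a weak sharp local minimizer of $W$ relative to $\Sc$.

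I expect the main obstacle to be precisely this last estimate: showing that the linear part $t^\nu W'(\tilde{y}^\nu;d)$ is $o((t^\nu)^2)$ rather than merely $O((t^\nu)^2)$, since an uncontrolled negative term of size $\Theta((t^\nu)^2)$ would ruin the argument. This is exactly the role of the moving base point $y$ in~\eqref{eq:second-dd} together with the hypothesis that $W$ is constant along $\overline{\Sc}$: jointly they force the linear behavior of $W$ along the projection-ray directions $d^\nu$ to vanish to the needed order. Everything else is routine compactness (to pass to a convergent subsequence of the $d^\nu$) and bookkeeping; the technical details of this step follow~\cite[Theorem~2.5]{MS-DEW:99}.
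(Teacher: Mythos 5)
First, be aware that the paper does not prove this lemma at all: it is imported verbatim from \cite[Theorem 2.5]{MS-DEW:99}, so there is no in-paper argument to compare yours against — the only question is whether your standalone proof is complete. Your skeleton (contradiction, extraction of a unit proximal normal $d$ from the projections $y^\nu \in \Pi_\Sc(x^\nu)$, second-order expansion along $x^\nu = \tilde{y}^\nu + t^\nu d$) is the right one and matches how such results are usually established.

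The gap is exactly at the step you yourself flag, and the justification you offer for it does not work. You need $W'(\tilde{y}^\nu; d)$ to be bounded below by $o(t^\nu)$, and you argue this by combining (a) the fact that constancy of $W$ on $\Sc$ forces $W'(y;\cdot)$ to annihilate $T_\Sc(y)$ with (b) the fact that $d^\nu$ is asymptotically a normal direction at $y^\nu$. This is a non sequitur: (a) controls $W'$ only along tangent directions, whereas $d$ is by construction a \emph{normal} direction — precisely the directions about which constancy of $W$ on $\Sc$ says nothing. The hypotheses contain no first-order stationarity of $W$ on $\Sc$ (it is not assumed that $\Sc$ consists of critical points or minimizers of $W$), so nothing rules out $W'(\tilde{y}^\nu;d)$ being negative of order $\Theta(t^\nu)$, which contributes $-\Theta((t^\nu)^2)$ to your expansion and destroys the contradiction; deferring ``the technical details of this step'' to \cite[Theorem 2.5]{MS-DEW:99} is circular, since that is the statement being proved. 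The deeper issue is that the first-order term cannot be absorbed within the framework you set up: in Studniarski--Ward the order-two sufficient condition is phrased through a lower second-order derivative of the form $\liminf_{t\downarrow 0,\, d'\to d}\, t^{-2}\bigl[W(x^*+td') - W(x^*)\bigr]$, in which no first-order term is subtracted, so its positivity already encodes the required first-order growth and the contradiction closes immediately. Relatedly, the ``decisive point'' you invoke — the moving base point $y$ in~\eqref{eq:second-dd} — actually makes that definition degenerate if read literally: if the joint limit over $(t,y)\to(0^+,x)$ with $y$ varying independently of $t$ exists and $W$ is continuous with finite directional derivatives, then fixing $y$ and sending $t\to 0^+$ forces $W(y)=W(x)$ for all $y$ near $x$, i.e., $W$ locally constant. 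The correct reading perturbs the direction, $d'\to d$, with the base point fixed at $x^*$, and under that reading your control of the linear term evaporates entirely.
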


We now proceed with the construction of the Lyapunov function. Let
$T^{(\alpha)}_\Cc:\real^n \rightrightarrows \real^n$ be the set-valued
map where, for each $x \in \real^n$, $T^{(\alpha)}_\Cc(x)$ is the
constraint set of \eqref{eq:smooth-proj-grad}.
Let $J_\alpha:\real^n \times \real^n \to \real$ be
\[
  J_\alpha(x, \xi) = \alpha f(x)  + \nabla f(x)^\top \xi +
  \frac{1}{2}\norm{\xi}^2.
\]
Consider the optimization problem
\begin{equation}
  \label{eq:alt-opt}
    \begin{aligned} 
      &\underset{\xi \in T^{(\alpha)}_\Cc(x)}{\text{minimize}} && J_\alpha(x, \xi) 
    \end{aligned}
\end{equation}
As we show next, {the solution to \eqref{eq:alt-opt} is
  \eqref{eq:smooth-proj-grad}.}

\begin{lemma}[Correspondence between~\eqref{eq:alt-opt}
    and~\eqref{eq:smooth-proj-grad}]
  \label{lem:alt-opt}
  Let $x \in \real^n$. Then the program \eqref{eq:smooth-proj-grad} has a solution 
  at $x$ if and only if \eqref{eq:alt-opt} has a solution, in which case
  $
    \Gc_\alpha(x) = \arg\min_{\xi \in T^{(\alpha)}_\Cc(x)}\{ 
      J_\alpha(x, \xi) \}.
  $
\end{lemma}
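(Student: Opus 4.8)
The plan is to recognize that this is a ``complete the square'' identity: for fixed $x$, the objectives of \eqref{eq:smooth-proj-grad} and \eqref{eq:alt-opt} differ only by an additive constant, and their feasible sets coincide by definition of $T^{(\alpha)}_\Cc(x)$. Hence the two programs have the same set of optimal solutions, and in particular one is solvable precisely when the other is.

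\textbf{Key steps.} First, I would expand the square in the cost of \eqref{eq:smooth-proj-grad}:
\[
  \tfrac{1}{2}\norm{\xi + \nabla f(x)}^2 = \tfrac{1}{2}\norm{\xi}^2 + \nabla f(x)^\top \xi + \tfrac{1}{2}\norm{\nabla f(x)}^2 .
\]
Comparing with the definition of $J_\alpha$, this gives, for every $\xi \in \real^n$,
\[
  J_\alpha(x, \xi) = \tfrac{1}{2}\norm{\xi + \nabla f(x)}^2 + \alpha f(x) - \tfrac{1}{2}\norm{\nabla f(x)}^2 ,
\]
so $J_\alpha(x, \cdot)$ and the cost of \eqref{eq:smooth-proj-grad} differ by the constant $c(x) := \alpha f(x) - \tfrac{1}{2}\norm{\nabla f(x)}^2$, which does not depend on $\xi$. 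Second, I would note that by definition $T^{(\alpha)}_\Cc(x)$ is exactly the constraint set of \eqref{eq:smooth-proj-grad}; therefore \eqref{eq:alt-opt} minimizes a $\xi$-shift-by-constant of the cost of \eqref{eq:smooth-proj-grad} over the identical feasible set. Consequently the two problems have the same (possibly empty) set of minimizers: either both feasible sets are empty, or they agree and the minimizers coincide. Third, when a solution exists, strong convexity of $\xi \mapsto \tfrac12\norm{\xi + \nabla f(x)}^2$ (equivalently of $J_\alpha(x,\cdot)$) makes it unique, and by the definition of ${\Gc}_\alpha$ following \eqref{eq:smooth-proj-grad} it equals ${\Gc}_\alpha(x)$; hence ${\Gc}_\alpha(x) = \arg\min_{\xi \in T^{(\alpha)}_\Cc(x)} J_\alpha(x, \xi)$.

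\textbf{Main obstacle.} There is essentially no obstacle: the argument is a one-line algebraic identity together with the observation that the feasible sets are literally the same by construction. The only point requiring a word of care is the ``if and only if'' for solvability, which reduces to the fact that a strongly convex quadratic over a fixed polyhedron attains its infimum iff the polyhedron is nonempty and the infimum is finite — and since the two objectives differ by a constant, finiteness of one infimum is equivalent to finiteness of the other.
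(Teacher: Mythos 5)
Your proposal is correct and follows essentially the same argument as the paper: both note that the feasible sets coincide by the definition of $T^{(\alpha)}_\Cc(x)$ and that $J_\alpha(x,\xi) - \tfrac{1}{2}\norm{\xi + \nabla f(x)}^2 = \alpha f(x) - \tfrac{1}{2}\norm{\nabla f(x)}^2$ is independent of $\xi$, so the two programs share the same minimizers. Your extra remark on the equivalence of solvability is a welcome but minor elaboration of the same idea.
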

\begin{proof}
  Note that the feasible sets of \eqref{eq:alt-opt} and \eqref{eq:smooth-proj-grad}
  coincide. Next, for all~$(x, \xi) \in \real^n \times \real^n$
  \[
    \begin{aligned}
      J_\alpha(x, \xi) - \frac{1}{2}\norm{\xi + \nabla f(x)}^2 =
      \alpha f(x) - \frac{1}{2}\norm{\nabla f(x)}^2.
    \end{aligned}
  \]
  Since the difference of the objectives in \eqref{eq:alt-opt} and
  \eqref{eq:smooth-proj-grad} does not depend on~$\xi$, both problems
  have the same optimizer.
\end{proof}

Lemma~\ref{lem:alt-opt} shows that \eqref{eq:alt-opt} is another
characterization of the safe gradient flow in terms of a parametric
quadratic program.  Let $W_\alpha:X \to \real$ be the value function
\begin{equation}
  \label{eq:lyap-W}
  \begin{aligned}
    W_\alpha(x) &= \inf_{\xi \in T^{(\alpha)}_\Cc(x)} \{ J_\alpha(x, \xi) \}
    \\
    &= \alpha f(x) + \nabla f(x)^\top \Gc_\alpha(x) +
    \frac{1}{2}\norm{{\Gc}_\alpha(x)}^2 .
  \end{aligned}
\end{equation}
Our strategy to prove Theorem~\ref{thm:non-isolated} consists of
showing that $W_\alpha$ is a Lyapunov function satisfying the
hypotheses in Lemma~\ref{lem:angle-convergence} whenever $\alpha$ is
sufficiently large. Towards this end, we begin by computing the
directional derivative of $W_\alpha$. Let
$Q:X \times \real_{\geq 0}^m \times \real^k \to \real^{n \times n}$ be
the matrix-valued function,
\[
  Q(x, u, v) = \nabla^2 f(x) + \sum_{i= 1}^{m}u_i \nabla^2 g_i(x) +
  \sum_{j =1}^{k}v_j \nabla^2 h_j(x).
\]
Since the Lagrange multipliers, $(u(x), v(x))$ are unique in a
neighborhood of $\Sc$, we slightly abuse notation by defining
$Q(x):=Q(x, u(x), v(x))$. By Lipschitzness of $u$ and $v$, $Q$ is
continuous on $X$. The proof of the next result follows
from~\cite[Thm. 2]{KJ:84} and \cite[Cor. 4.1]{AS:85} and is
omitted for brevity.

\begin{lemma}[Differentiability of $W_\alpha$]
  \label{lem:diff-W}
  Suppose that $\Sc$ satisfies the hypotheses in
  Theorem~\ref{thm:non-isolated}, and $X$ is an open set containing
  $\Sc$ on which $({\Gc}_\alpha(x), u(x), v(x))$ is the unique solution
  to~\eqref{eq:KKT_proj}. Then
  \begin{enumerate}
  \item For all $x \in X$, $W_\alpha$ is differentiable with
    \begin{align}
      \label{eq:gradW}
      \nabla W_\alpha(x) = -(\alpha I - Q(x))\Gc_{\alpha}(x) ;
    \end{align}
  \item For all $x^* \in \Sc$, $W_\alpha$ is twice directionally
    differentiable in any direction $d \in \real^n$, where
  \end{enumerate}
  \begin{equation}
    \label{eq:directional-derivative}
    \begin{aligned} 
      W_{\alpha}''(x^*; d) =  &\underset{\zeta \in
        \real^n}{\textnormal{min}} && \begin{bmatrix}
        d \\ \zeta
      \end{bmatrix}^\top\begin{bmatrix}
        \alpha Q(x^*) & Q(x^*) \\ Q(x^*) & I
      \end{bmatrix}\begin{bmatrix}
        d \\ \zeta
      \end{bmatrix}
      \\
      &\textnormal{s.t.} && \alpha \nabla h_j(x^*)^\top d + \nabla
      h_j(x^*)^\top \zeta = 0,
      \\
      &&& \qquad \forall j=1, \dots, k ,
      \\
      &&& \alpha \nabla g_i(x^*)^\top d + \nabla g_i(x^*)^\top \zeta
      = 0,
      \\
      &&& \qquad \forall i \in I^+_0(x^*),
      \\
      &&& \alpha \nabla g_s(x^*)^\top d + \nabla g_s(x^*)^\top \zeta
      \leq 0,
      \\
      &&& \qquad \forall s \in I_0(x^*) \setminus I^+_0(x^*).
    \end{aligned}
  \end{equation}
\end{lemma}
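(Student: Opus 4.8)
The plan is to read both statements off from first- and second-order parametric sensitivity analysis of the quadratic program~\eqref{eq:alt-opt} (equivalently~\eqref{eq:smooth-proj-grad}), exploiting that, by assumption, its primal solution $\Gc_\alpha(x)$ together with its multipliers $(u(x), v(x))$ is single-valued and Lipschitz on $X$, and that $\Gc_\alpha(x^*) = 0$ for every $x^* \in \Sc$ by Proposition~\ref{prop:equilibria} (since $\Sc \subset \KKT$). Throughout, let
\[ L_\alpha(\xi, u, v; x) = J_\alpha(x, \xi) + u^\top\Big(\pfrac{g(x)}{x}\xi + \alpha g(x)\Big) + v^\top\Big(\pfrac{h(x)}{x}\xi + \alpha h(x)\Big) \]
denote the Lagrangian of~\eqref{eq:alt-opt}.

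For (i), I would first note that the transversality condition~\eqref{eq:transversality} and the equality constraint~\eqref{eq:pequality} make the two constraint terms of $L_\alpha$ vanish at the optimizer, so that $W_\alpha(x) = J_\alpha(x, \Gc_\alpha(x)) = L_\alpha(\Gc_\alpha(x), u(x), v(x); x)$ by Lemma~\ref{lem:alt-opt}. Since the primal-dual solution of~\eqref{eq:alt-opt} is single-valued and Lipschitz near each point of $X$, the envelope-type sensitivity result~\cite[Theorem~2]{KJ:84} gives that $W_\alpha$ is differentiable on $X$ with $\nabla W_\alpha(x) = \nabla_x L_\alpha(\Gc_\alpha(x), u(x), v(x); x)$, the partial gradient being taken with $(\xi, u, v)$ frozen at the optimizer. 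A direct computation yields
\[ \nabla_x L_\alpha(\xi, u, v; x) = \alpha\Big(\nabla f(x) + \pfrac{g(x)}{x}^\top u + \pfrac{h(x)}{x}^\top v\Big) + Q(x, u, v)\,\xi , \]
and substituting the stationarity condition~\eqref{eq:dlagrangian}, which reads $\nabla f(x) + \pfrac{g(x)}{x}^\top u(x) + \pfrac{h(x)}{x}^\top v(x) = -\Gc_\alpha(x)$, together with $Q(x) = Q(x, u(x), v(x))$, delivers $\nabla W_\alpha(x) = -(\alpha I - Q(x))\Gc_\alpha(x)$. Continuity of $\nabla W_\alpha$ then follows from continuity of $\Gc_\alpha$ and of $Q$ (the latter by Lipschitzness of $u$ and $v$).

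For (ii), I would carry out a second-order expansion of $W_\alpha$ along the ray $x^* + td$, $t \to 0^+$. Since $\Gc_\alpha(x^*) = 0$ and $\Gc_\alpha$ is Lipschitz, $\Gc_\alpha(x^* + td) = t\zeta + o(t)$ for a directional derivative $\zeta$ of the solution map, whose existence and characterization as the solution of the linearization of~\eqref{eq:KKT_proj} about $x^*$ follow from directional differentiability of solutions of parametric quadratic programs under LICQ, cf.~\cite[Corollary~4.1]{AS:85}. Inserting this expansion into $J_\alpha(x^* + td, \Gc_\alpha(x^* + td))$ and using $\nabla f(x^*) + \pfrac{g(x^*)}{x}^\top u^* + \pfrac{h(x^*)}{x}^\top v^* = 0$, the $t^2$-coefficient collapses to the quadratic form in $(d, \zeta)$ with the Hessian block matrix displayed in~\eqref{eq:directional-derivative}, while the second-order expansion of the binding constraints $\nabla g_i(x)^\top\xi + \alpha g_i(x) \le 0$ for $i \in I_0(x^*)$ (an equality for $i \in I_0^+(x^*)$, since strongly active constraints remain active along the optimal arc) and $\nabla h_j(x)^\top\xi + \alpha h_j(x) = 0$ evaluated along $(x, \xi) = (x^* + td, t\zeta)$ reproduces the affine constraint system on $(d, \zeta)$ there; optimizing over $\zeta$ gives the claimed formula for $W_\alpha''(x^*; d)$. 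The hypotheses of Theorem~\ref{thm:non-isolated} — the weak-sharp growth condition~\ref{item:weak-sharp} together with the second-order necessary condition of Lemma~\ref{lem:necessary-condition} — ensure that this auxiliary program is bounded below, so the expansion is genuinely of second order.

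The main obstacle is the rigorous handling of this sensitivity analysis without differentiating $\Gc_\alpha$ directly: the envelope identity in~(i) must be justified from single-valuedness and Lipschitzness of the primal-dual pair alone, and the second-order expansion in~(ii) requires both the directional differentiability of the parametric-QP solution map under LICQ and a careful matching of Shapiro's abstract value-function formula with the concrete data of~\eqref{eq:alt-opt}. For this reason the cleanest route, as in the statement, is to record the gradient identity and the quadratic program~\eqref{eq:directional-derivative} as specializations of~\cite[Theorem~2]{KJ:84} and~\cite[Corollary~4.1]{AS:85}, respectively, rather than to reprove the underlying sensitivity theory.
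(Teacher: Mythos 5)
Your proposal follows essentially the same route as the paper, which omits the proof entirely and simply records that the result follows from \cite[Theorem 2]{KJ:84} (for the gradient formula via the envelope identity $\nabla W_\alpha = \nabla_x L_\alpha$ at the optimizer) and \cite[Corollary 4.1]{AS:85} (for the second-order directional derivative of the value function); your computation of $\nabla_x L_\alpha$ and the substitution of the stationarity condition correctly reproduce \eqref{eq:gradW}. The only inessential slip is the closing remark that the hypotheses of Theorem~\ref{thm:non-isolated} are needed for the auxiliary program \eqref{eq:directional-derivative} to be bounded below — it is automatically strongly convex in $\zeta$ because of the identity block in the Hessian, so no such appeal is required.
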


\begin{remark}[Dependence of $Q(x)$ on $\alpha$]
  {\rm In general, given~$x \in X$, the value of $Q(x)$ depends on the
    choice of~$\alpha$, since~$(u(x), v(x))$ depends on
    $\alpha$.  However, if $x^* \in \KKT$, then~$(u(x^*), v(x^*))$
    corresponds to the Lagrange multipliers of \eqref{eq:opt} and
    $Q(x^*)$ is the Hessian of the Lagrangian of \eqref{eq:opt}.  In
    particular, this means that for $x^* \in \KKT$, the value 
    of~$Q(x^*)$ depends only on the problem data and is independent of
    $\alpha$.  } \oprocend
\end{remark}

We now proceed with the proof of Theorem~\ref{thm:non-isolated}.

\begin{proof}[Proof of Theorem~\ref{thm:non-isolated}]
  Let $\alpha^* = \sup_{x^* \in \Sc} \{ \rho(Q(x^*)) \}$.  For
  $\alpha > \alpha^*$, we have $\alpha I - Q(x^*) \succ 0$ for all
  $x^* \in \Sc$. Assume without loss of generality that
  $\alpha I - Q(x) \succ 0$ for all $x \in U$ (if not, since $Q$ is
  continuous, we can always find an open subset of $U$ containing
  $\Sc$ for which these property holds).  We claim that $W_\alpha$
  satisfies each of the conditions~(i)-(iii) in
  Lemma~\ref{lem:angle-convergence} with $\Kc = \real^n$.

  We begin by showing condition (iii).  If $x^* \in U$ is a local
  minimizer of $W_\alpha$, then
  $\nabla W_\alpha(x^*) = (\alpha I - Q(x^*))\Gc_{\alpha}(x^*) =
  0$. Since $\alpha I - Q(x^*) \succ 0$, from~\eqref{eq:gradW} we
  deduce ${\Gc}_\alpha(x^*) = 0$, so $x^* \in \KKT$ and therefore
  $x^* \in U \cap \KKT = \Sc$.
  
  Conversely, suppose that $x^* \in \Sc$.  Note that, by
  Proposition~\ref{prop:equilibria}, $W_\alpha(x) = \alpha f(x)$ for
  all $x \in \Sc$. Therefore, if $x^* \in \text{int}(\Sc)$, it follows
  that $x^*$ is a local minimizer of $W_\alpha$.  On the other hand, suppose that 
  $x^* \in \partial \Sc$.  For $d \in \real^n$, let $\zeta_d$ be the
  unique optimizer of \eqref{eq:directional-derivative}. Then
  \begin{equation}
    \label{eq:W_alpha}
    W''_\alpha(x^*; d) = \alpha d^\top Q(x^*)d + 2 \zeta_d ^\top
    Q(x^*)d + \norm{\zeta_d}^2.  
  \end{equation}
  From the constraints in \eqref{eq:directional-derivative},
  $\zeta_d + \alpha d \in \Gamma(x^*)$. Because $x^* \in \partial\Sc$
  is a weak sharp minimizer of $f$ relative to $\Sc$, by
  Lemma~\ref{lem:necessary-condition}, there exists $\gamma > 0$ such
  that
  \begin{equation}
    \label{eq:Ldd}
    \begin{aligned}
      \ell''(x^*; \zeta_d + \alpha d) &= (\zeta_d + \alpha d)^\top
      \nabla^2 \ell(x^*)(\zeta_d + \alpha d) ,
      \\
      &\geq \gamma \dist_{T_\Sc(x^*)}(\zeta_d + \alpha d)^2 , \;
      \forall d \in \real^n.
    \end{aligned}
  \end{equation}
  Since $\nabla^2 \ell(x^*) = Q(x^*)$, we combine \eqref{eq:W_alpha}
  and \eqref{eq:Ldd} to get
  \[
    \begin{aligned}
      \alpha W''_\alpha(x^*; d) \geq &\zeta_d^\top(\alpha I - Q(x^*))\zeta_d
      + \gamma \dist_{T_\Sc(x^*)}(\zeta_d + \alpha d)^2.
    \end{aligned}
  \]
  Because $\alpha I - Q(x^*) \succ 0$, if $W''_\alpha(x^*; d) = 0$,
  then $\zeta_d = 0$ and $d \in T_S(x^*)$.  But
  $T_\Sc(x^*) \cap \proxnormal_\Sc(x^*) = \{ 0 \}$, which means
  $W_\alpha''(x^*; d) > 0$ for all
  $d \in \proxnormal_\Sc(x^*) \setminus \{ 0 \}$, so by
  Lemma~\ref{lem:sufficient-condition}, $x^*$ is a weak sharp local
  minimizer of~$W_\alpha$.

  Next we verify condition (ii) in
  Lemma~\ref{lem:angle-convergence}. For all $x \in U$,
  \[
    \begin{aligned}
      \Dini_{{\Gc}_\alpha}W_\alpha(x) = -{\Gc}_\alpha(x)^\top(\alpha I - Q(x)){\Gc}_\alpha(x).
    \end{aligned}
  \]
  Without loss of generality, we can assume that $U$ is bounded. Then,
  we can choose $c_1, c_2 > 0$ so that
  \[
    \begin{aligned}
      c_1 &< \inf_{x \in U}\{ \lambda_\text{min}(\alpha
      I - Q(x)) \}
      \\
      c_2 &> \sup_{x \in U}\{ \lambda_\text{max}(\alpha I - Q(x)) \}.
    \end{aligned}
  \] 
  It follows that
  $\Dini_{{\Gc}_\alpha} W_\alpha(x) \leq -c_1 \norm{{\Gc}_\alpha(x)}^2$ for all
  $x \in U$, but since
  $\norm{\nabla W_\alpha(x)} \leq c_2 \norm{{\Gc}_\alpha(x)}$, we have for
  all $x \in U$,
  \[
    \Dini_{{\Gc}_\alpha}W_\alpha(x) \leq -\frac{c_1}{c_2} \norm{\nabla
      W_\alpha(x)} \norm{{\Gc}_\alpha(x)}.
  \]
  Finally, 
  we claim that $W_\alpha|_U$ is 
  a globally subanalytic function, and therefore 
  condition (i) holds by \cite[Thm. 1]{KK:98} and the fact 
  that the class of globally subanalytic sets is an o-minimal 
  structure (cf. \cite[Definition 1]{KK:98}). To prove the claim,
  first note that, since $f$ is real-analytic, $J_\alpha$ is
  real-analytic, and therefore subanalytic \cite[Definition
  3.1]{EB-PDM:88}. Since $U$ is bounded, and the restriction of any
  subanalytic function to a bounded open set is globally subanalytic~\cite{VDD-CML:96}, 
  it follows that $J_\alpha|_U$ is
  globally subanalytic. 
  Finally, {since $T^{(\alpha)}_\Cc|_U:U \rightrightarrows \real^n$ is a globally subanalytic set valued 
  map}, and
    \[ W_\alpha|_U(x) = \inf_{\xi \in T^{(\alpha)}_\Cc|_U(x)}\{
      J_\alpha|_U(x, \xi) \}, \] 
  {it follows by application of  
  Lemma~\ref{lem:partial} that $W_\alpha|_U$ is globally subanalytic. The statement follows by applying
   Lemma~\ref{lem:angle-convergence} with $\Kc = \real^n$.}
  \end{proof}

\subsection{Global Convergence}
Finally, we turn to the characterization of the global convergence
properties of the safe gradient flow. We show that when the problem
data are real-analytic and the feasible set is bounded, every trajectory
converges to a KKT point.

\begin{theorem}[Global convergence
    properties]\label{thm:global-convergence}
  Consider the optimization problem~\eqref{eq:opt}, and assume $\Cc$
  is bounded, $f$, $g$, and $h$ are real-analytic functions, and LICQ
  holds everywhere on $\Cc$. Let $X$ be an open set containing $\Cc$
  on which the safe gradient flow is well defined.  Then there is
  $\alpha^* > 0$ such that for $\alpha > \alpha^*$, every trajectory
  of the safe gradient flow starting in $X$ converges to some KKT
  point.
\end{theorem}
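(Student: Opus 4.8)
The plan is to use the value function $W_\alpha$ from~\eqref{eq:lyap-W} as a Lyapunov function on a neighborhood of $\Cc$ and then invoke the angle-condition finite-length test, Lemma~\ref{lem:angle-convergence}, to conclude that each bounded trajectory has finite arclength, hence converges to a single equilibrium, which is a KKT point by Proposition~\ref{prop:equilibria}.

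First I would pin down the region of analysis and the threshold $\alpha^*$. Since LICQ holds on the compact set $\Cc$, the Lagrange multipliers $(u(x),v(x))$ of~\eqref{eq:smooth-proj-grad} are unique and Lipschitz on an open neighborhood of $\Cc$ --- this extends Proposition~\ref{prop:strong-regularity}, which gives the property near KKT points, to all of $\Cc$ by the same strong-regularity reasoning together with LICQ and continuity --- so $Q(x) := Q(x,u(x),v(x))$ is well defined and continuous there; moreover, by the argument behind Proposition~\ref{prop:approx} (for $x\in\Cc$ the inequalities indexed by $I_-(x)$ become slack for large $\alpha$), there is a constant $M$ with $\rho(Q(x)) \le M$ for all $x \in \Cc$ and all sufficiently large $\alpha$. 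Fix $\alpha > \alpha^* := M$ sufficiently large. By continuity of $Q$ there is a compact neighborhood $\Omega \subset X$ of $\Cc$ on which $\alpha I - Q(x) \succ 0$ and, shrinking $\Omega$, on a neighborhood of which $W_\alpha$ is $C^1$ with $\nabla W_\alpha(x) = -(\alpha I - Q(x))\Gc_\alpha(x)$ as in Lemma~\ref{lem:diff-W}(i); pick $0 < c_1 \le \lambda_{\min}(\alpha I - Q(x))$ and $c_2 \ge \lambda_{\max}(\alpha I - Q(x))$ uniformly over $x \in \Omega$.

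It then remains to check the hypotheses of Lemma~\ref{lem:angle-convergence} on $\Omega$ and to confine trajectories there. From the gradient formula, $\Dini_{\Gc_\alpha}W_\alpha(x) = -\Gc_\alpha(x)^\top(\alpha I - Q(x))\Gc_\alpha(x) \le -c_1\|\Gc_\alpha(x)\|^2 \le 0$, with equality iff $\Gc_\alpha(x) = 0$, i.e.\ iff $x \in \KKT$ (Proposition~\ref{prop:equilibria}); combined with $\|\nabla W_\alpha(x)\| \le c_2\|\Gc_\alpha(x)\|$ this gives the angle inequality $\Dini_{\Gc_\alpha}W_\alpha(x) \le -(c_1/c_2)\,\|\nabla W_\alpha(x)\|\,\|\Gc_\alpha(x)\|$ on $\Omega$. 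Exactly as in the last paragraph of the proof of Theorem~\ref{thm:non-isolated}, $W_\alpha|_\Omega$ is globally subanalytic --- $J_\alpha$ is real-analytic hence subanalytic, $\Omega$ is bounded, $T^{(\alpha)}_\Cc|_\Omega$ is a globally subanalytic set-valued map, and $W_\alpha|_\Omega(x) = \inf_{\xi \in T^{(\alpha)}_\Cc|_\Omega(x)} J_\alpha|_\Omega(x,\xi)$ --- so the Kurdyka-\Lojasiewicz inequality holds on $\Omega$ by~\cite[Theorem 1]{KK:98}. Finally, by Theorem~\ref{thm:feasible-safety}, $\Cc$ is asymptotically stable; assuming (shrinking $X$ if necessary) that $X$ lies in its region of attraction, every trajectory starting in $X$ is bounded, converges to $\Cc \subset \textnormal{int}(\Omega)$, and is eventually contained in $\Omega$. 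Applying Lemma~\ref{lem:angle-convergence} (with $\Kc = \real^n$) to this tail shows the trajectory has finite arclength and converges to a point $\bar x$; since $\omega$-limit sets are invariant, $\bar x$ is an equilibrium, hence $\bar x \in \KKT$ by Proposition~\ref{prop:equilibria}. I expect the main obstacle to be the step upgrading the \emph{pointwise} regularity of the flow at KKT points (Proposition~\ref{prop:strong-regularity}) to differentiability of $W_\alpha$ with the clean gradient formula on an \emph{entire} neighborhood of the possibly non-isolated set $\Cc$, since this underpins both the angle inequality and the uniformity of $\alpha^*$; the subanalyticity bookkeeping and the confinement of trajectories to $\Omega$ are routine given the machinery already in place.
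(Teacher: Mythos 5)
Your overall strategy (use $W_\alpha$ as an angle-condition Lyapunov function and invoke Lemma~\ref{lem:angle-convergence}) is the right one, but the attempt to run it \emph{globally} on a neighborhood of all of $\Cc$ breaks down in two places. First, the claimed extension of Proposition~\ref{prop:strong-regularity} from neighborhoods of KKT points to a neighborhood of all of $\Cc$ does not follow ``by the same reasoning'': that proof identifies the active set of the quadratic program \eqref{eq:smooth-proj-grad} with $I_0(x^*)$ precisely because the solution is $\xi=0$ at a KKT point, and only then does LICQ for \eqref{eq:opt} transfer to LICQ for the QP. At $x\in\Cc\setminus\KKT$ the QP solution $\Gc_\alpha(x)\neq 0$ and its active set may contain indices $i\in I_-(x)$ (those with $\nabla g_i(x)^\top\Gc_\alpha(x)=-\alpha g_i(x)$), whose gradients LICQ of \eqref{eq:opt} says nothing about; uniqueness and Lipschitzness of $(u(x),v(x))$ --- hence well-definedness of $Q(x)$, the gradient formula \eqref{eq:gradW}, and your constants $c_1,c_2$ --- are therefore not guaranteed on all of $\Omega$. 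Relatedly, on $\Cc\setminus\KKT$ the multipliers, and hence $Q(x)$, depend on $\alpha$, so defining $\alpha^*$ as a bound on $\rho(Q(x))$ over all of $\Cc$ is circular; the paper's $\alpha^*=\sup_{x^*\in\Sc}\{\rho(Q(x^*))\}$ is taken over KKT points only, where $Q$ is independent of~$\alpha$.

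Second, and more fundamentally, Lemma~\ref{lem:angle-convergence} cannot be applied with $\Sc=\KKT\cap\Omega$: its hypothesis (i) requires a single value $V^*$ with $V(x)=V^*$ exactly on $\Sc$, and $W_\alpha=\alpha f$ on $\KKT$ generally takes different values on different connected components of $\KKT$ (the KL inequality of Lemma~\ref{lem:KL-inequality} is likewise anchored at a single critical value). This is exactly why the paper first proves Lemma~\ref{lem:connected-component} --- using the fact that $f$ is a height function for the pair $(\Cc,\Gc_\alpha)$, the subanalytic Morse--Sard theorem, and the limit-set result of \cite{AA-CE:10} --- to conclude that $\omega(x)$ lies in a \emph{single} connected component $\Sc$ of $\KKT$ on which $f$ is constant; only then does it apply Proposition~\ref{prop:strong-regularity} and the angle-condition lemma on a neighborhood $U$ of that one component. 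Your proposal is missing this localization step, and without it both the constancy hypothesis and the regularity of $W_\alpha$ fail. The tail-of-trajectory argument at the end of your proof is essentially the paper's, but it must be run inside a neighborhood $\tilde U$ of the specific component $\Sc$ containing $\omega(x)$, not a neighborhood of all of $\Cc$.
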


To prove Theorem \ref{thm:global-convergence}, we use the next result
characterizing the positive limit set of solutions of the safe
gradient flow.

\begin{lemma}[Convergence to connected
    component]\label{lem:connected-component}
  Consider the optimization problem~\eqref{eq:opt}, and assume $\Cc$
  is bounded, $f$, $g$, and $h$ are real-analytic functions, and MFCQ
  holds everywhere on $\Cc$. Let $X$ be an open set containing $\Cc$
  on which the safe gradient flow is well defined.  Then for all
  $x \in X$, $\omega(x)$ is contained in a unique connected component
  of~$\KKT$.
\end{lemma}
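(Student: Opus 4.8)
The plan is to show that for every $x \in X$ the positive limit set $\omega(x)$ is nonempty, compact, connected, and contained in $\KKT$; the conclusion is then immediate, since any connected subset of a topological space lies in exactly one of its connected components. Nonemptiness, compactness, connectedness, and (forward) invariance of $\omega(x)$ are standard once the forward trajectory from $x$ is known to be bounded, cf.~\cite{WH-VSC:08}; and boundedness follows from Theorem~\ref{thm:feasible-safety}: since $\Cc$ is bounded and asymptotically stable, $t \mapsto \Phi_t(x)$ converges to $\Cc$ and hence remains in a bounded set. The crux is therefore $\omega(x) \subseteq \KKT$. For this I would use the $\ell^1$ exact penalty function $V_\epsilon$ of~\eqref{eq:exact-penalty} as a Lyapunov-type function, but only on a neighborhood of $\Cc$ — which is all the hypotheses afford, since they assume MFCQ on $\Cc$ rather than EMFCQ on an open set.

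\emph{Step 1: a penalty function nonincreasing near $\Cc$.} I would first exhibit an open $N \supseteq \Cc$ and $\epsilon > 0$ such that $\Dini_{\Gc_\alpha} V_\epsilon(x) \le -\norm{\Gc_\alpha(x)}^2 \le 0$ for all $x \in N$. By Lemma~\ref{lem:necessity-optimality} (whose proof shows~\eqref{eq:smooth-proj-grad} is feasible on a full neighborhood of each $x \in \Cc$) together with compactness of $\Cc$, there is an open $N \supseteq \Cc$ with $\Lambda_\alpha(x) \ne \emptyset$ for all $x \in N$; shrinking $N$ if necessary, Lemma~\ref{lem:uniform-boundedness} provides $B > 0$ with $\norm{u}_\infty, \norm{v}_\infty \le B$ for every $(u,v) \in \Lambda_\alpha(x)$, $x \in N$. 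Now $V_\epsilon$ is directionally differentiable everywhere, and the computation in the proof of Lemma~\ref{lem:strong-exact-penalty} uses only this and the constraints defining $\Gc_\alpha$, so~\eqref{eq:lie-penalty} holds on all of $N$; substituting the identity $\Dini_{\Gc_\alpha} f(x) = -\norm{\Gc_\alpha(x)}^2 + \alpha u^\top g(x) + \alpha v^\top h(x)$ from Lemma~\ref{lemma:decrescent} for any $(u,v) \in \Lambda_\alpha(x)$, and splitting $u^\top g(x)$ over $I_0(x)$, $I_+(x)$, $I_-(x)$ (using $u \ge 0$, $g_i(x) = 0$ on $I_0(x)$, $g_i(x) < 0$ on $I_-(x)$, and the bound $B$), one obtains
\[
\Dini_{\Gc_\alpha} V_\epsilon(x) \le -\norm{\Gc_\alpha(x)}^2 + \alpha\Big(B - \tfrac{1}{\epsilon}\Big)\Big(\sum_{i \in I_+(x)} g_i(x) + \sum_{j=1}^{k} |h_j(x)|\Big).
\]
Since the last parenthesis is nonnegative, choosing $\epsilon < 1/B$ yields the claim on $N$.

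\emph{Step 2: LaSalle on $N$.} Fix $x \in X$. Its trajectory converges to $\Cc$ (Theorem~\ref{thm:feasible-safety}), so it is bounded and enters and remains in $N$, say for $t \ge T$. On $[T,\infty)$ the map $t \mapsto V_\epsilon(\Phi_t(x))$ is nonincreasing and bounded below (the trajectory is bounded, $V_\epsilon$ continuous), hence convergent, so $V_\epsilon$ is constant on $\omega(x)$ (equal to $\lim_{t \to \infty} V_\epsilon(\Phi_t(x))$ at every point of $\omega(x)$). Since $\omega(x) \subseteq \overline{\Cc} = \Cc \subseteq N$ and $\omega(x)$ is forward invariant, for $y \in \omega(x)$ and $h \ge 0$ small we have $\Phi_h(y) \in \omega(x)$, hence $V_\epsilon(\Phi_h(y)) = V_\epsilon(y)$, and taking $\limsup_{h \to 0^+}$ gives $\Dini_{\Gc_\alpha} V_\epsilon(y) = 0$. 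Combining with $\Dini_{\Gc_\alpha} V_\epsilon(y) \le -\norm{\Gc_\alpha(y)}^2$ from Step 1 forces $\Gc_\alpha(y) = 0$, and since MFCQ holds at $y \in \Cc$, Proposition~\ref{prop:equilibria} gives $y \in \KKT$. Thus $\omega(x) \subseteq \KKT$, and being nonempty and connected it lies in a unique connected component of $\KKT$.

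\emph{Main obstacle.} The delicate part is Step 1: producing a single function that is nonincreasing along the flow on a whole neighborhood of $\Cc$ under only MFCQ on $\Cc$. The difficulty is that off $\Cc$ the objective $f$ itself need not decrease (the terms $\alpha u^\top g(x)$ with $g_i(x) > 0$ spoil the sign in Lemma~\ref{lemma:decrescent}), so one must simultaneously (i) ensure $\Lambda_\alpha(x) \ne \emptyset$ near $\Cc$ via the regularity consequence of MFCQ, (ii) control the constraint-violation contributions by the uniform multiplier bound of Lemma~\ref{lem:uniform-boundedness}, and (iii) absorb them by taking the penalty weight $1/\epsilon$ large. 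Working only near $\Cc$ — rather than on all of $X$ — is precisely what lets MFCQ suffice in place of EMFCQ, at the cost of invoking the asymptotic stability of $\Cc$ to know that trajectories eventually reach that neighborhood.
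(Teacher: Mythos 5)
Your proof is correct in substance but follows a genuinely different route from the paper's. The paper treats $f$ itself as a \emph{height function} that decreases only on the attracting set $\Cc$ and invokes the Arsie--Ebenbauer result \cite[Theorem 6]{AA-CE:10}; verifying the hypotheses of that theorem is where real-analyticity enters, via the Morse--Sard theorem for subanalytic functions \cite[Theorem 14]{JB-AD-AL:06}, which guarantees that $f$ is constant on each connected component of $\KKT$. You instead manufacture a function that decreases on a full neighborhood $N$ of $\Cc$ --- the $\ell^1$ penalty $V_\epsilon$, exactly as in the proof of Theorem~\ref{thm:stability}\ref{item:aymptotic-stability} but globalized over the compact set $\Cc$ --- and then run the classical LaSalle argument plus connectedness of $\omega(x)$. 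What your approach buys is elementarity and, notably, independence from real-analyticity: nothing in your two steps uses more than $C^1$ data with locally Lipschitz derivatives and MFCQ on $\Cc$, so you in fact prove a slightly stronger statement. What it costs is two places where you lean on cited lemmas beyond their literal wording. First, Lemma~\ref{lem:necessity-optimality} under MFCQ asserts $\Lambda_\alpha \neq \emptyset$ only on $U \cap \Cc$; you correctly observe that its \emph{proof} (Slater's condition plus Robinson regularity) yields nonemptiness on all of $U$ --- this is the same reading Proposition~\ref{prop:Lipschitz} relies on, so it is safe, but you should say so explicitly. Second, Lemma~\ref{lem:uniform-boundedness} is stated only for a neighborhood of a KKT point $x^*$, whereas you need a uniform multiplier bound on a neighborhood of all of $\Cc$, including points far from $\KKT$; the fact you need is true (Slater's condition for the QP~\eqref{eq:smooth-proj-grad} at each $x_0 \in \Cc$ gives local boundedness of its dual optimal set, and compactness of $\Cc$ gives uniformity), but it requires a short separate argument rather than a citation of that lemma. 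With those two points made precise, your proof stands, and the final step --- a nonempty connected $\omega(x) \subseteq \KKT$ lies in exactly one connected component --- is immediate, sidestepping the paper's need for the Morse--Sard and height-function machinery altogether.
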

\begin{proof}
  By Theorem~\ref{thm:feasible-safety}, $\Cc$ is asymptotically stable
  and forward invariant on $X$, and by Lemma~\ref{lemma:decrescent},
  $\Dini_{\Gc_{\alpha}}f(x) \leq 0$ for all $x \in \Cc$. Using the
  terminology from~\cite{AA-CE:10}, $f$ is a \emph{height function} of
  the pair $(\Cc, \Gc_{\alpha})$.  
  
  Because $f, g,$ and $h$ are
  real-analytic and $\Cc$ is bounded, $\Cc$ is a globally subanalytic
  set. Let $\hat{f} = f + \delta_\Cc$.  Then $\hat{f}$ is a globally
  subanalytic function, $\hat{f}$ is continuous on
  $\dom(\hat{f}) = \Cc$, and $\KKT$ is precisely the set of critical
  points of $\hat{f}$. By the Morse-Sard Theorem for subanalytic
  functions \cite[Thm. 14]{JB-AD-AL:06}, $\KKT$ has at most a
  countable number of connected components, and $\hat{f}$ is constant
  on each connected component. Since $f(x) = \hat{f}(x)$ for all
  $x \in \Cc$, $f$ is also constant on each connected component of
  $\KKT$, meaning that the connected components of $\KKT$ are
  \emph{contained in $f$} (cf.  \cite[Definition 5]{AA-CE:10}).

  Hence, we can apply \cite[Thm. 6]{AA-CE:10}, and conclude that
  for all $x \in X$, the positive limit~set $\omega(x)$ is nonempty
  and contained in a unique connected component of $E$, where 
  \[E = \{x \in \Cc \mid \Dini_{{\Gc}_\alpha}f(x) = 0 \}.\]  
  However, by
  Lemma~\ref{lemma:decrescent}, $E = \KKT$, concluding the result.
  \end{proof}

We are ready to prove Theorem \ref{thm:global-convergence}.

\begin{proof}[Proof of Theorem \ref{thm:global-convergence}]
  By Lemma~\ref{lem:connected-component}, for $x \in X$, there is a
  connected component $\Sc \subset \KKT$ such that
  $\omega(x) \subset \Sc$.  Since LICQ holds on $\Sc$, by Proposition
  \ref{prop:strong-regularity} there is an open set $U$ containing
  $\Sc$ and Lipschitz functions
  $(u, v):U \to \real^{m}_{\geq 0} \times \real^{k}$ such that
  $U \cap \KKT = \Sc$ and $({\Gc}_\alpha(x), u(x), v(x))$ is the unique
  solution to \eqref{eq:KKT_proj} on $U$.
  
  Let $W_\alpha$ be given by~\eqref{eq:lyap-W}. By
  Lemma~\ref{lem:diff-W}, $W_\alpha$ is differentiable on $U$, and
  using the same reasoning as in the proof of
  Theorem~\ref{thm:non-isolated}, {$W_\alpha$ is a globally subanalytic
  function, and satisfies the Kurdyka-\Lojasiewicz inequality.}
  Furthermore, if
  $\alpha > \alpha^* = \sup_{x^* \in \Sc}\{\rho(Q(x^*)) \}$, then
  there is some $c > 0$ such that
  $\Dini_{{\Gc}_\alpha}W_\alpha(y) \leq -c \norm{\nabla
    W_\alpha(y)}\norm{{\Gc}_\alpha(y)}$ for all $y \in U$. 
    
  Thus, we can apply Lemma~\ref{lem:angle-convergence} with
  $\Kc = \real^n$ to conclude that every trajectory starting in
  $U$ that remains in $U$ for all time converges to a
  point in $\Sc$. However, since $\omega(x) \subset \Sc$, 
  there exists a $T > 0$ such that $\Phi_{T}(x) \in U$, 
  and for all $t > 0$,  
  $\Phi_t(\Phi_{T}(x)) = \Phi_{T + t}(x) \in U$. 
  Thus, there exists $x^* \in \Sc$ such that
  $\Phi_{T + t}(x) \to x^*$ as $t \to \infty$, and therefore
  the trajectory
  starting at $x$ converges to~$x^*$.
\end{proof}

{
  \begin{remark}[Lower bounds on the parameter $\alpha$ to ensure
      global convergence]
    {\rm Note that the proof of Theorem~\ref{thm:global-convergence}
      yields the expression
      $\alpha^* = \sup_{x^* \in \Sc} \{ \rho(Q(x^*)) \}$ for the lower
      bound on $\alpha$ that guarantees global convergence. In
      general, computing this expression requires knowledge of the
      primal and dual optimizers of the original problem. However,
      reasonable assumptions on $f$, $g$, and $h$ allow us to obtain
      upper bounds of $\alpha^*$. For instance, if $\Cc$ is polyhedral
      and $\nabla f$ is $\ell_f$-Lipschitz on $\Cc$,
      it follows that $\norm{\nabla^2 f(x)} \leq \ell_f$, and
      $\nabla^2 g_i(x) = 0$ and $\nabla^2 h_j(x) = 0$ for all
      $i=1, \dots m$ and $j=1, \dots k$. Therefore,
      $\alpha^* \leq \ell_f$, and $\ell_f$ can be used instead as a
      lower bound on $\alpha$ to ensure global convergence.  }
    \oprocend
\end{remark}
}

\section{Comparison With Other Optimization Methods}\label{sec:comparison}
Here we compare the safe gradient flow with other continuous-time
flows to solve optimization problems.  For simplicity, we restrict our
attention to an inequality constrained convex program.
Figure~\ref{fig:sim} shows the outcome of the comparison on the same
example problem taken from~\cite{AH-SB-GH-FD:21}.  The methods
compared are the projected gradient flow, the logarithmic barrier
method (see e.g.~\cite[Sec. 3]{AF-PEG-MHW:02}), the
$\ell^2$-penalty gradient flow (see e.g.~\cite[Ch. 4]{AVF-GPM:90}),
the projected saddle-point dynamics
(see e.g.,~\cite{AC-EM-SHL-JC:18-tac}), the globally projected
dynamics (see e.g.,~\cite{YSX-JW:00}), and the safe gradient flow.

Under the logarithmic barrier method, the feasible set is forward
invariant and the minimizer of the logarithmic barrier penalty
$f_{\text{barrier}}(x; \mu) = f(x) - \mu\sum_{i=1}^{m}\log(-g_i(x))$,
with $\mu>0$, does not correspond to the minimizer of \eqref{eq:opt}.
Under the unconstrained minimizer of the $\ell^2$-penalty,
$f_{\text{penalty}}(x; \epsilon) = f(x) +
\frac{\epsilon}{2}\sum_{i=1}^{m}[g_i(x)]^2_+ $, with $\epsilon > 0$,
does not correspond to the minimizer of \eqref{eq:opt}, and the
feasible set is not forward invariant under the gradient flow of
$f_{\text{penalty}}$. Under the projected saddle-point dynamics, the
feasible set is not forward invariant, but each trajectory converges
to $x^*$. Under the globally projected dynamics, the feasible set is
forward invariant, trajectories converge to~$x^*$, and trajectories
are smooth. However, unlike the safe gradient flow, the globally
projected dynamics may be undefined when the constraints are not convex.

\begin{figure}[t!]
  \centering%
  
  \subfigure[Projected gradient flow]{
    \includegraphics[width=.45\linewidth]{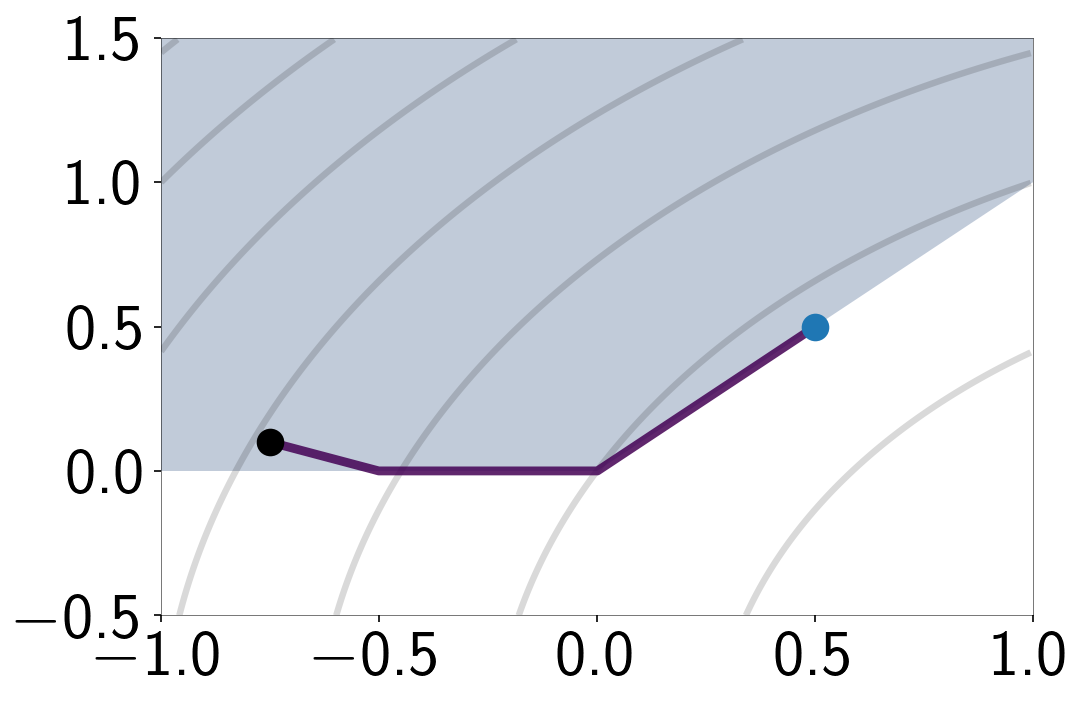}
    \label{subfig:pgf}
  }
  \subfigure[Logarithmic barrier flow]{
    \includegraphics[width=.45\linewidth]{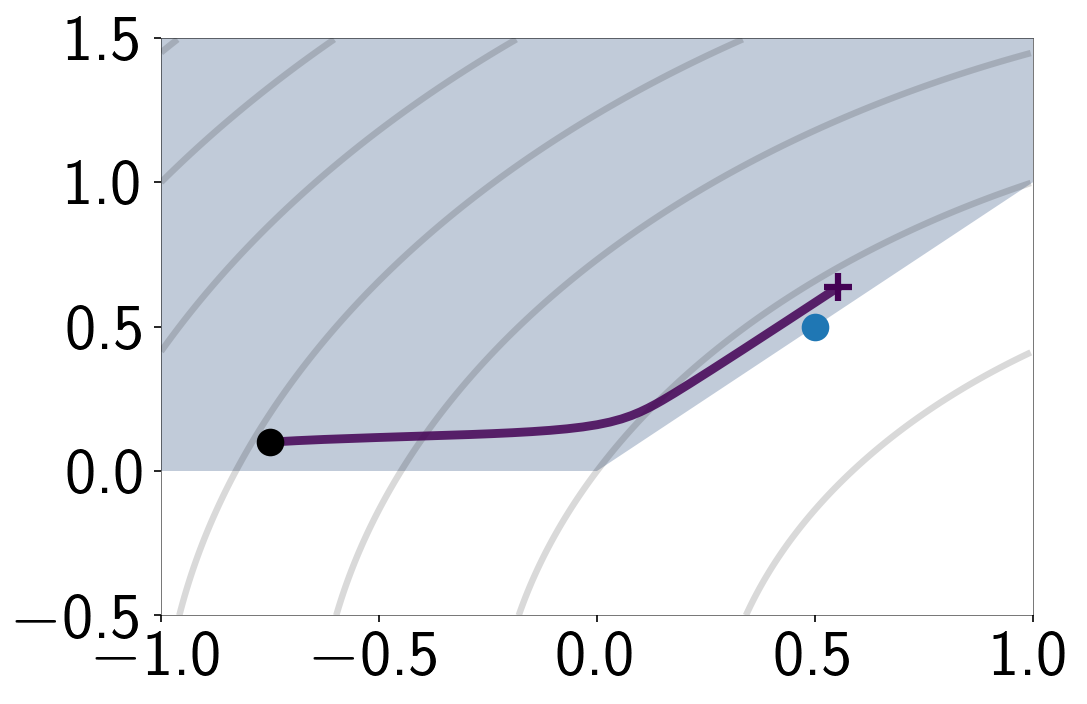}
    \label{subfig:log-barrier}
  }
  \\
  \subfigure[$\ell^2$-penalty gradient flow]{
    \includegraphics[width=.45\linewidth]{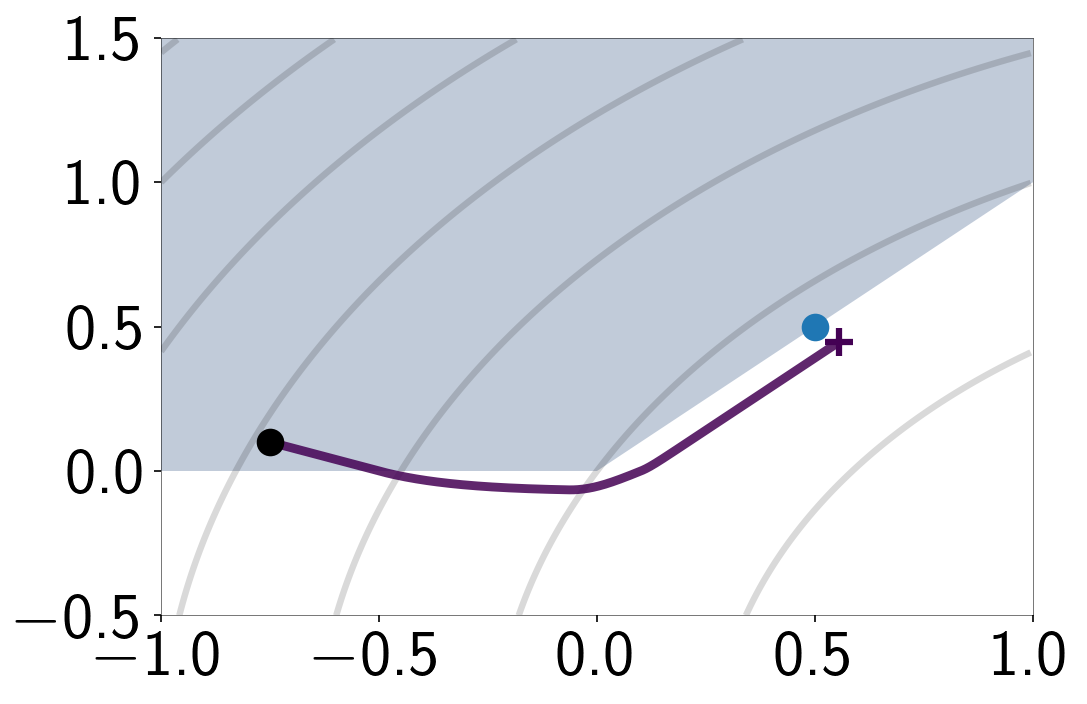}
    \label{subfig:l2-penalty}
  }
  \subfigure[Projected saddle-point dynamics]{
    \includegraphics[width=.45\linewidth]{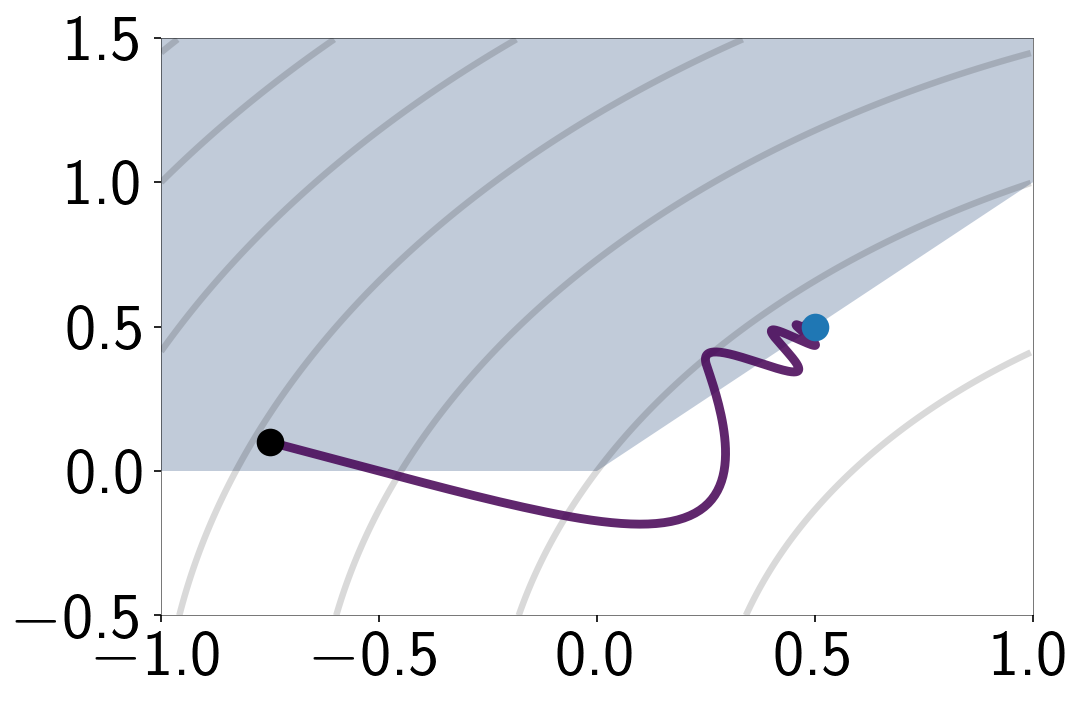}
    \label{subfig:psd}
  }
  \\
  \subfigure[Globally projected dynamics]{
    \includegraphics[width=.45\linewidth]{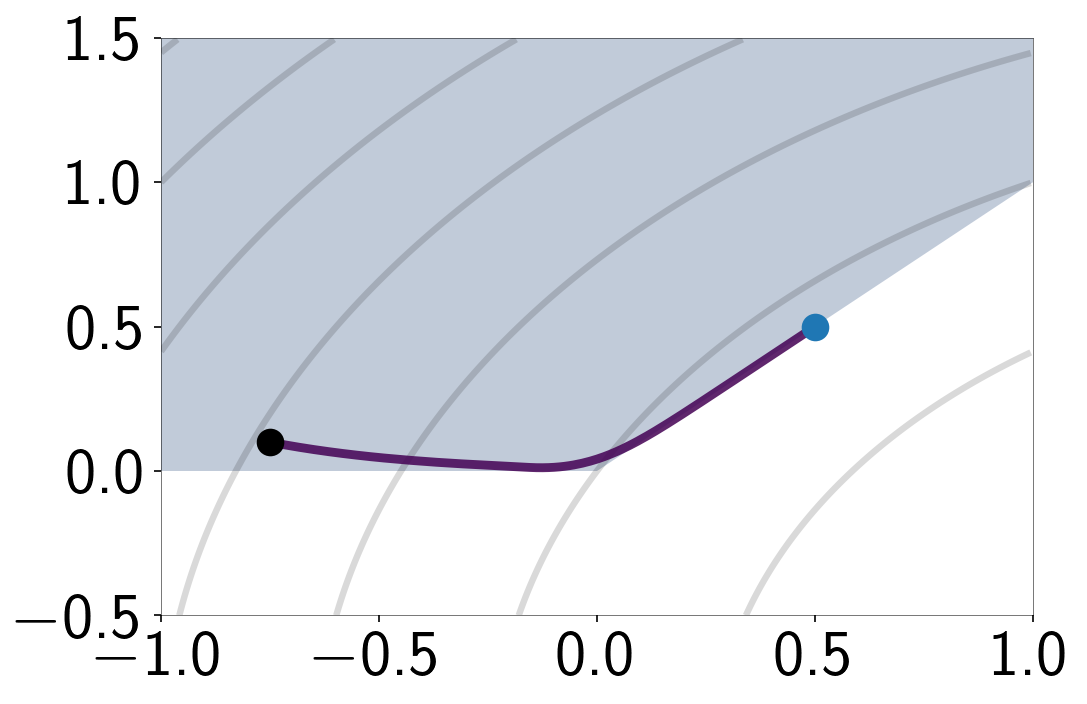}
    \label{subfig:globally-projected}
  }
  \subfigure[Safe gradient flow]{
    \includegraphics[width=.45\linewidth]{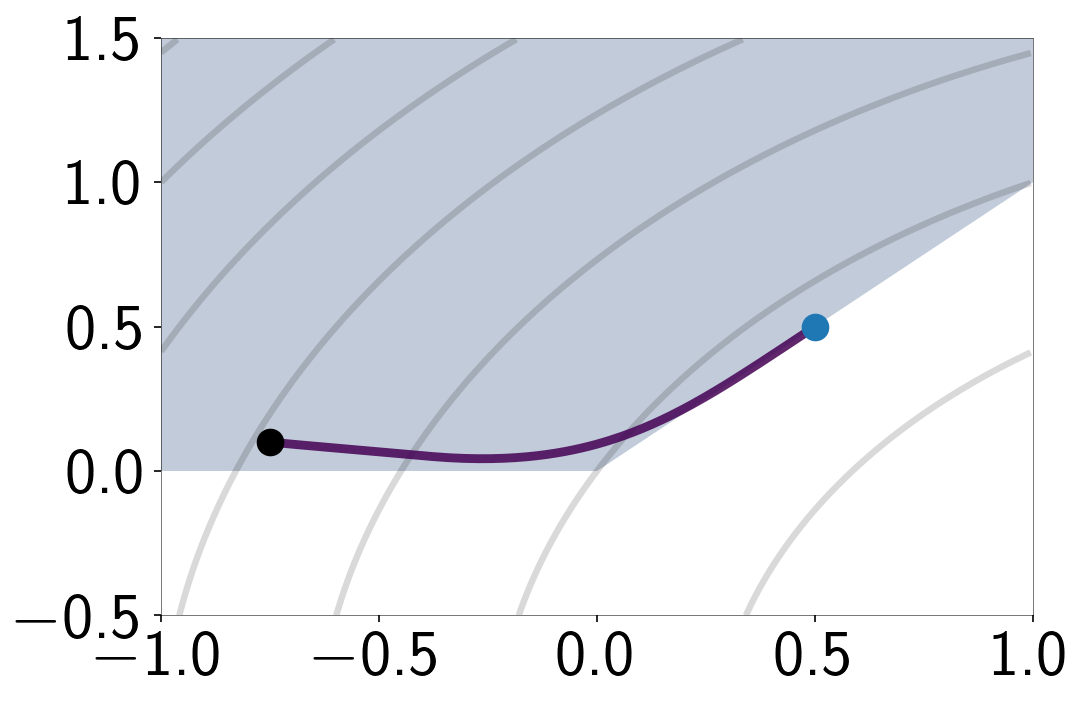}
    \label{subfig:safe-gradient-flow}
  }
  \caption{Comparison of methods minimizing
    $f(x) = 0.25\norm{x}^2 - 0.5x_1 + 0.25x_2$ subject to $x_2 \geq 0$
    and $x_1 \leq x_2$ (see also \cite[Figure 8]{AH-SB-GH-FD:21} for a
    comparison of additional methods).  The
    blue-shaded region is the feasible set and the grey curves are
    level sets of the objective function. The initial condition is
    denoted by the purple dot, and the global minimizer is denoted by
    a blue dot. (a) The trajectory converges to the global minimizer,
    and the trajectory remains inside the feasible set for all time
    but it is nonsmooth. (b) The trajectory is smooth and remains
    inside the feasible set but does not converge to the global
    minimizer. However, by choosing $\mu$ small enough, the trajectory
    can be made to converge arbitrarily close to the minimizer. (c)
    The trajectory is smooth, but does not remain inside the feasible
    set or converge to the global minimizer. However, by choosing
    $\epsilon$ small enough, the trajectory can be made to converge
    arbitrarily close to the minimizer.  (d) Initialized with
    $u(0) = 0$, the trajectory does not remain inside the feasible
    set, but it converges to the global minimum. (e) The trajectory is
    smooth, converges to the global minimizer, and remains inside the
    feasible set. However, this method may not be well-defined for
    nonconvex problems (f) The trajectory is smooth, converges to the
    global minimizer, and remains inside the feasible set.  
  }\label{fig:sim}
  \vspace*{-3ex}
\end{figure}

\section{Conclusions}\label{sec:conclusions}
We have introduced the safe gradient flow, a continuous-time dynamical
system to solve constrained optimization problems that makes the
feasible set forward invariant. The system can be derived either as a
continuous approximation of the projected gradient flow or by
augmenting the gradient flow of the objective function with inputs,
then using a control barrier function-based QP to ensure safety of the
feasible set.  The equilibria are exactly the critical points of the
optimization problem, and the steady-state inputs at the equilibria
correspond to the dual optimizers of the program.
We have conducted a thorough stability analysis of the dynamics,
identified conditions under which isolated local minimizers are
asymptotically stable and nonisolated local minimizers are semistable.
Future work will {explore the flow's robustness
  properties, and leverage convexity to obtain stronger global
  convergence guarantees.  Further, we hope to explore issues related
  to the practical implementation of the safe gradient flow, including
  interconnections of the optimizing dynamics with a physical system,
}, develop discretizations of the dynamics
and study their relationship with discrete-time iterative methods for
nonlinear programming, and {extend the framework to Newton-like
  flows for nonlinear programs which incorporate higher-order
  information.}

\appendices
\counterwithin{theorem}{section}

\def\thesection{\Alph{section}}%
\def\thesectiondis{\Alph{section}}%

\section{The Kurdyka-\Lojasiewicz
  Inequality}\label{subsec:o-minimal}
Here we discuss the Kurdyka-\Lojasiewicz inequality, which plays a
critical role in the stability analysis of the systems considered in
this paper.  The original formulation of the \Lojasiewicz
inequality\cite{SJ:83} states that for a real-analytic function
$V:\real^n \to \real$ and a critical point $x^* \in V^{-1}(0)$,
there exists  $\rho >0$, $\theta \in [0, 1)$, and $c > 0$ with
$ |V(x)|^\theta < c \norm{\nabla V(x)}$ for all~$x$ in a bounded
neighborhood of $x^*$ such that $ |V(x)| \leq \rho$. This inequality
is used to establish that trajectories of gradient flows
of real-analytic functions have finite 
arclength and converge pointwise to the set of equilibria.

In many applications, the assumption of real analyticity is too
strong. 
For example, the value function of a parametric nonlinear
program generally does not satisfy this assumption, even when all the
problem data is real-analytic.  
However, generalizations of the
\Lojasiewicz inequality have since been shown \cite{KK:98, JB-AD-AL:07}
to hold for much broader classes of functions, 
which can be characterized using the notion of o-minimal structures, 
which we define next.

\begin{definition}[o-minimal structures]
  For each $n \in \intpos$, let $\Oc_n$ be a collection of subsets of
  $\real^n$. We call $\{ \Oc_n \}_{n \in \intpos}$ an o-minimal
  structure if the following properties hold.
  \begin{enumerate}
    \item $\Oc_n$ is closed under complements, 
    finite unions and finite intersections.
    \item If $A \in \Oc_{n_1}$ and $B \in \Oc_{n_2}$ then $A \times B \in \Oc_{n_1 + n_2}$. 
    \item Let $\pi:\real^{n + 1} \to \real^{n}$ be the projection map onto the first $n$ 
    components. 
    If $A \in \Oc_{n + 1}$, then $\pi(A) \in \Oc_{n}$. 
    \item Let $g_1, \dots, g_m$ and $h_1, \dots, h_k$ be polynomial functions  
    on $\real^n$ with rational coefficients. 
    Then $\{ x \in \real^n \mid g_i(x) < 0, h_j(x) = 0, 
    1 \leq i \leq m, 1 \leq j \leq k \} \in \Oc_n$
    \item $\Oc_1$ is precisely the collection of all finite 
    unions of intervals in $\real$. 
  \end{enumerate}
\end{definition}

Examples of o-minimal structures include the class of semi-algebraic
sets and the class of globally subanalytic sets. We refer the reader
to~\cite{VDD-CML:96} for a detailed overview of these concepts. The
notion of o-minimality plays a crucial role in optimization theory,
since the remarkable geometric properties of definable functions
allows nonlinear programs involving them to be studied using powerful
tools from real algebraic geometry and variational analysis,
cf. \cite{ADI:09}.

Let $\{ \Oc_n \}_{n \in \intpos}$ be an o-minimal structure. A set $X
\subset \real^n$ such that $X \in \Oc_n$ is said to be \emph{definable
  with respect to $\{ \Oc_n \}_{n \in \intpos}$}. When the particular
o-minimal structure is obvious from context, then we simply call $X$
\emph{definable}. Given a definable set $X$ and $f:X \to \real^m$ and
$\Fc:X \rightrightarrows \real^m$, we say that $f$ (resp. $\Fc$) is
\emph{definable} if $\graph(f) \in \Oc_{n + m}$ (resp. $\graph(\Fc)
\in \Oc_{n + m}$).  The image and preimage of a definable set with
respect to a definable function is also definable, and the class of
definable functions is closed with respect to composition and linear
combinations.  Furthermore, as we show next, the value function of a
parametric nonlinear program is definable when the problem data is
definable.

\begin{lemma}[Definability of value functions]\label{lem:partial}
  Let $X\subset \real^n$, $J:X \times \real^{m} \to \real$ and $\Fc:X
  \rightrightarrows \real^{m}$ be definable. Let $V:X \to
  \extendedreal$ be given by $ V(x) = \inf_{\xi \in \Fc(x)}\{ J(x,
  \xi) \}$, and suppose that $\dom(V) = X$. Then $V$ is also
  definable.
\end{lemma}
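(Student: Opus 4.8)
The plan is to realize $\graph(V)$ as a set obtained from the definable data $\graph(\Fc)$ and $\graph(J)$ by the operations under which an o-minimal structure is closed --- finite products, intersections, coordinate permutations, projections, and complements --- together with a first-order description of the infimum. First I would record that $X$ is itself definable: since $J:X\times\real^m\to\real$ is definable, $\graph(J)\in\Oc_{n+m+1}$, and projecting onto the first $n$ coordinates (iterating the projection property in the definition of o-minimal structure $m+1$ times) gives $X\in\Oc_n$. Next, form
\[
  G = \{ (x,\xi,t)\in\real^n\times\real^m\times\real \mid (x,\xi)\in\graph(\Fc),\ (x,\xi,t)\in\graph(J) \}.
\]
Up to a reordering of coordinates, the first condition cuts out $\graph(\Fc)\times\real$ and the second cuts out $\graph(J)$, so $G$ is an intersection of definable sets and hence definable. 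Projecting out the $\xi$-coordinates yields
\[
  B = \{ (x,t)\in\real^n\times\real \mid \exists\,\xi\in\real^m,\ (x,\xi,t)\in G \} \in \Oc_{n+1},
\]
which is exactly $\{(x,t)\mid x\in X,\ \exists\,\xi\in\Fc(x),\ t=J(x,\xi)\}$; that is, for each $x\in X$ the fiber $B_x=\{t:(x,t)\in B\}$ is the set of values attained by $\xi\mapsto J(x,\xi)$ over $\Fc(x)$.

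The key step is to describe $\graph(V)$ by a first-order formula over the definable set $B$. Since $\dom(V)=X$, for every $x\in X$ the number $V(x)=\inf B_x$ is a real number, and $(x,s)\in\graph(V)$ if and only if
\[
  x\in X \ \wedge\ \big(\forall t:\ (x,t)\in B \Rightarrow s\le t\big) \ \wedge\ \big(\forall \epsilon>0\ \exists t:\ (x,t)\in B \ \wedge\ t< s+\epsilon\big).
\]
The first conjunct defines a definable set; the remaining two are built from $B$ and polynomial (in)equalities using $\wedge$, $\Rightarrow$, $\exists$ and $\forall$. Because $\Oc_{n+1}$ is closed under finite intersections and complements and under the projections that realize $\exists$, the quantifier $\forall$ is available as $\neg\exists\neg$, so the set of pairs $(x,s)$ satisfying the displayed formula is definable. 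Thus $\graph(V)\in\Oc_{n+1}$, and since $\dom(V)=X$ this graph lies in $\real^n\times\real$, so $V$ is definable.

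I expect the only point needing care to be the bookkeeping in this last step: checking that the clause ``$\forall\epsilon>0\ \exists t\,(\cdots)$'' is genuinely expressible within the closure operations of an o-minimal structure (it is, via $\neg\exists\neg$ together with the projection and complement properties), and observing that it is precisely the hypothesis $\dom(V)=X$ that lets us keep $\graph(V)$ inside $\real^{n+1}$, avoiding any separate discussion of the values $\pm\infty$. Everything else is a routine application of the defining properties of o-minimal structures and the standard stability of definability under finite products, intersections, projections, and complements.
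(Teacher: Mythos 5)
Your argument is correct: the paper states Lemma~\ref{lem:partial} without proof (treating it as a standard fact about o-minimal structures), and your construction of $\graph(V)$ from $\graph(\Fc)$ and $\graph(J)$ via products, intersections, projections, and complements --- encoding the infimum by the first-order formula ``$s$ is a lower bound of $B_x$ and is approached within every $\epsilon$'' --- is exactly the standard way to prove it. The only point worth stating explicitly is that closure under coordinate permutations (needed to project out the middle $\xi$-block with a projection axiom that only removes the last coordinate) follows from axiom (iv) applied to the graph of the permutation together with products, intersections, and projections; with that remark your proof is complete.
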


Finally, functions definable on o-minimal structures satisfy a
generalization of the \Lojasiewicz inequality~\cite{KK:98}.

\begin{lemma}[Kurdyka-\Lojasiewicz inequality for
    definable functions]\label{lem:KL-inequality}
  Let $X \subset \real^n$ be a bounded, open, definable set, and
  $V:X \to \real$ a definable, differentiable function, and
  $V^* = \inf_{y \in X}V(y)$.  Then there exists $c > 0$, $\rho > 0$,
  and a strictly increasing, definable, differentiable function
  $\psi: [0, \infty) \to \real$ such that
  \[ \psi'(V(x) - V^*) \norm{ \nabla V(x)} \geq c \] for all $x \in U$
  where $V(x) - V^* \in  (0, \rho)$.
\end{lemma}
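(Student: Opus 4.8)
The plan is to reduce the statement to Kurdyka's theorem on gradients of definable functions (\cite{KK:98}) by means of an auxiliary one-variable definable function recording the least slope of $V$ on each level set. First I would normalize: replacing $V$ by $V-V^{*}$, assume $V^{*}=0$, so $V\ge 0$ on $X$. Since $X$ is a bounded definable set it has finitely many connected components; on any component $X_{j}$ with $\inf_{X_{j}}V>0$, or on which $V$ is constant, the condition $V(x)-V^{*}\in(0,\rho)$ is empty once $\rho$ is small, so such components are irrelevant. Hence it suffices to treat a single connected component on which $\inf V=0<\sup V$ (the finitely many such components are handled identically and the resulting constants combined), and on such a component $V$ is continuous, so $V(X)$ is an interval containing $(0,\rho_{0})$ for some $\rho_{0}>0$.

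Next I would introduce the least-slope function $m:(0,\rho_{0})\to[0,\infty)$, $m(t)=\inf\{\norm{\nabla V(x)}\mid x\in X,\ V(x)=t\}$. Because $V$ is definable and differentiable, $\nabla V$ and $x\mapsto\norm{\nabla V(x)}$ are definable, and the set-valued map $t\mapsto\{x\in X\mid V(x)=t\}$ has definable graph; applying Lemma~\ref{lem:partial} with $(0,\rho_{0})$ as the parameter set shows $m$ is definable (its domain being all of $(0,\rho_{0})$ since those level sets are nonempty and the infimum of a nonempty set bounded below by $0$ is finite). By the o-minimal monotonicity theorem (\cite{VDD-CML:96}) there is $\rho_{1}\in(0,\rho_{0}]$ on which $m$ is continuous and monotone, and by the o-minimal Sard theorem (\cite{JB-AD-AL:06}, applied to the definable $C^{1}$ function $V$) the critical values of $V$ form a finite set, so after shrinking $\rho_{1}$ every $t\in(0,\rho_{1})$ is a regular value. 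I would then show $m(t)>0$ for all $t\in(0,\rho_{1})$: if $m(t_{0})=0$, definable choice yields a definable curve $s\mapsto x_{s}\in V^{-1}(t_{0})\cap X$ with $\norm{\nabla V(x_{s})}\to 0$; boundedness of $X$ forces $x_{s}\to p\in\overline{X}$, and $p\notin X$ (else $\nabla V(p)=0$ at the regular level $t_{0}$), while the remaining case $p\in\partial X$ is ruled out by the curve selection lemma together with the definability of $V$ near the frontier of its graph.

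The core step is the construction of $\psi$, which I would obtain from \cite{KK:98}: since $m$ is definable, positive and continuous on $(0,\rho_{1})$, there exist $\rho\in(0,\rho_{1}]$, a constant $c>0$, and a strictly increasing definable $C^{1}$ function $\psi$ on $(0,\rho)$ with $\psi'(t)\ge c/m(t)$ there; one then extends $\psi$ to $[0,\infty)$ by its limit at $0$ and affinely for $t\ge\rho$, keeping it definable, strictly increasing and differentiable on $(0,\infty)$. When $f$, $g$, $h$ are subanalytic this $\psi$ is essentially explicit: the classical \Lojasiewicz inequality (\cite{SJ:83}) gives $m(t)\ge c' t^{\theta}$ with exponent $\theta\in[0,1)$, and $\psi(t)=Ct^{1-\theta}$ works. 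Finally, for any $x\in X$ with $t:=V(x)-V^{*}\in(0,\rho)$ one has $\norm{\nabla V(x)}\ge m(t)$ by definition of $m$, hence $\psi'(t)\,\norm{\nabla V(x)}\ge\psi'(t)\,m(t)\ge c$, which is the asserted inequality.

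The main obstacle is precisely this last construction: exhibiting a $\psi$ that is at once a majorant primitive of $1/m$ and definable in the ambient o-minimal structure. This is the heart of Kurdyka's theorem and is not elementary in general, although in the subanalytic setting used in this paper it reduces to the strict bound $\theta<1$ in the \Lojasiewicz inequality. A secondary delicate point is the positivity of $m$ near $0$, i.e.\ ruling out the slope of $V$ degenerating along level sets that run off to $\partial X$; this is where the o-minimal Sard theorem and the curve selection lemma enter.
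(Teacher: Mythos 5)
The paper does not prove this lemma at all: it is stated as a direct citation of Kurdyka's theorem~\cite{KK:98}, which is formulated in exactly this generality (a differentiable function definable in an o-minimal structure on a bounded open definable set). Your proposal is therefore not so much an alternative proof as an exposition of Kurdyka's own argument, and at the decisive step --- producing a definable, strictly increasing, differentiable $\psi$ whose derivative majorizes $c/m(t)$, where $m(t)=\inf\{\norm{\nabla V(x)}\mid V(x)=t\}$ is the least-slope function --- you defer back to~\cite{KK:98}. That is legitimate, and you are right that this step is the non-elementary heart of the theorem (in the subanalytic case it collapses to the classical \Lojasiewicz exponent bound $\theta<1$ and $\psi(t)=Ct^{1-\theta}$), but it means the surrounding scaffolding (splitting into finitely many connected components, definability of $m$ via Lemma~\ref{lem:partial}, the monotonicity theorem) buys nothing over simply invoking the cited theorem, which is what the paper does.

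Two remarks on the scaffolding itself. First, the component reduction and the definability of $m$ are fine. Second, the positivity of $m$ near $0$ is the shakiest step as you have written it: the o-minimal Sard theorem only excludes degeneration of $\nabla V$ at interior points of the level set, and your dismissal of the case where the minimizing sequence escapes to $\partial X$ (``ruled out by the curve selection lemma together with the definability of $V$ near the frontier of its graph'') is not an argument --- gradients of definable functions genuinely can degenerate along level sets running into the boundary, and handling these ``asymptotic critical values'' (showing they form a finite set) is precisely one of the technical contributions of~\cite{KK:98}. Since you are importing the construction of $\psi$ from that reference anyway, the cleaner statement is that positivity of $m$ on a punctured interval, and the existence of the desingularizing $\psi$, are both delivered by~\cite{KK:98}; as a self-contained claim your curve-selection remark would not stand.
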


\section{Lyapunov Tests for Stability}
Here we present Lyapunov based tests for stability of an
equilibrium. The first result is a special case of \cite[Cor.
7.1]{SPB-DSB:03}, and establishes the stability of an isolated
equilibrium.

\begin{lemma}[Lyapunov test for relative stability]
  \label{lem:relative-stability}
  Let $\Kc$ be a forward invariant set of $\dot{x} = F(x)$ and $x^*$
  an isolated equilibrium. Let $U \subset \real^n$ be an open set
  containing $x^*$ and suppose that $V:U \cap \Kc \to \real$ is a
  directionally differentiable function such that
  \begin{itemize}
    \item $x^*$ is the unique minimizer of $V$ on $U \cap \Kc$.
    \item $\Dini_F V(x) < 0$ for all $x \in U \cap \Kc \setminus \{x^* \}$. 
  \end{itemize} 
  Then $x^*$ is asymptotically stable relative to $\Kc$. 
\end{lemma}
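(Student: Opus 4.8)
The plan is to run the classical Lyapunov argument adapted to the relative setting, in two stages: first establish Lyapunov stability relative to $\Kc$ by trapping trajectories inside a sublevel set of $V$, then upgrade to attractivity via an $\omega$-limit set argument. Two preliminary observations will be used throughout. First, directional differentiability of $V$ in the sense of~\eqref{eq:first-dd} (a Hadamard-type limit) implies $V$ is continuous on $U\cap\Kc$. Second, for any forward-complete trajectory $x(\cdot)$ of $\dot x = F(x)$ that remains in $U\cap\Kc$, the scalar map $t\mapsto V(x(t))$ has upper-right Dini derivative equal to $\Dini_FV(x(t))\le 0$, and a continuous real function with nonpositive upper-right Dini derivative is nonincreasing; hence $t\mapsto V(x(t))$ is nonincreasing. (At $x^*$ the trajectory is constant, so $\Dini_FV(x^*)=0$, consistent with the hypotheses.)

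For Lyapunov stability, fix an open set $O$ containing $x^*$. Shrinking $O$, choose $r>0$ with $\overline{B_r(x^*)}\subset O\cap U$. If $\partial B_r(x^*)\cap\Kc=\emptyset$, then by forward invariance of $\Kc$ and continuity of trajectories, no trajectory starting in $B_r(x^*)\cap\Kc$ can reach the sphere $\partial B_r(x^*)$, so it stays in $B_r(x^*)\cap\Kc\subset O\cap\Kc$, and we may take $\tilde U = B_r(x^*)$. Otherwise, set $c=\min\{V(x)\mid x\in\partial B_r(x^*)\cap\Kc\}$, which is attained by compactness and satisfies $c>V(x^*)$ because $x^*$ is the \emph{unique} minimizer of $V$ on $U\cap\Kc$. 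Let $\Omega=\{x\in B_r(x^*)\cap\Kc\mid V(x)<c\}$; since $V$ is continuous and $V(x^*)<c$, this is a relatively open subset of $\Kc$ containing $x^*$, so there is $\delta>0$ with $B_\delta(x^*)\cap\Kc\subset\Omega$; take $\tilde U=B_\delta(x^*)$. For $x\in\Omega$, as long as the trajectory remains in $\overline{B_r(x^*)}$ it remains in $\Kc$ (forward invariance) and in $U$, hence $V(\Phi_t(x))\le V(x)<c$; if it reached $\partial B_r(x^*)$ at a first time $t_1$, then $\Phi_{t_1}(x)\in\partial B_r(x^*)\cap\Kc$ would force $V(\Phi_{t_1}(x))\ge c$, a contradiction. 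Thus the trajectory never leaves the compact set $\overline{B_r(x^*)}$, so it is forward complete (no finite escape time) and remains in $\Omega\subset O\cap\Kc$, proving Lyapunov stability relative to $\Kc$.

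For attractivity, keep $\tilde U$ (equivalently $\Omega$) as the attraction neighborhood. Given $x\in\Omega$, the forward trajectory lies in the compact set $\overline{B_r(x^*)}\cap\Kc\subset U\cap\Kc$, so the positive limit set $\omega(x)$ is nonempty, compact, invariant, and contained in $\overline{B_r(x^*)}\cap\Kc$. Since $t\mapsto V(\Phi_t(x))$ is nonincreasing and bounded below (continuity on a compact set), it converges to some $L$, and by continuity $V\equiv L$ on $\omega(x)$. For any $y\in\omega(x)$, invariance gives $\Phi_t(y)\in\omega(x)$ for all $t\ge 0$, so $V(\Phi_t(y))\equiv L$ and therefore $\Dini_FV(y)=0$; since $y\in U\cap\Kc$, hypothesis (b) forces $y=x^*$. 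Hence $\omega(x)=\{x^*\}$, and because the trajectory is bounded this yields $\Phi_t(x)\to x^*$ as $t\to\infty$. Combined with the stability established above, $x^*$ is asymptotically stable relative to $\Kc$.

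The main obstacle is not conceptual depth but careful bookkeeping: handling the relative topology of $\Kc$, treating the degenerate case $\partial B_r(x^*)\cap\Kc=\emptyset$, and invoking the right background facts — continuity from Hadamard directional differentiability, the monotonicity lemma for functions with nonpositive Dini derivative, and the nonemptiness/compactness/invariance of $\omega$-limit sets of bounded forward-complete trajectories. None of these is hard, but each must be stated precisely for the argument to be airtight.
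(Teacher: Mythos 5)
The paper does not actually prove this lemma: it is stated as a special case of the nontangency-based Lyapunov test in \cite[Corollary~7.1]{SPB-DSB:03} (Bhat and Bernstein), and the reader is referred there. Your proof is therefore a genuinely different route --- a self-contained, elementary Lyapunov--LaSalle argument: trap trajectories in a strict sublevel set of $V$ intersected with a ball to get stability relative to $\Kc$, then use monotonicity of $t\mapsto V(\Phi_t(x))$ together with invariance of the $\omega$-limit set and the strict-decrease hypothesis to force $\omega(x)=\{x^*\}$. The supporting observations are all correct: the Hadamard-type limit in~\eqref{eq:first-dd} does imply continuity of $V$, the identification of $\Dini_F V(\Phi_t(x))$ with the upper-right Dini derivative of $t\mapsto V(\Phi_t(x))$ follows from the semigroup property, and a continuous function with nonpositive upper-right Dini derivative is nonincreasing. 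What your approach buys is transparency and independence from the machinery of \cite{SPB-DSB:03}; what the citation buys the authors is brevity and a statement already formulated for stability relative to an invariant set.

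One point needs to be made explicit for your argument to be airtight: you use compactness of $\partial B_r(x^*)\cap\Kc$ (so that $c$ is attained and $c>V(x^*)$) and the inclusion $\omega(x)\subset\Kc$ (so that the hypothesis $\Dini_FV(y)<0$ applies to limit points $y\neq x^*$). Both require $\Kc$ to be closed, or at least closed relative to a neighborhood of $x^*$; the lemma as stated only says ``forward invariant,'' and for a non-closed $\Kc$ the infimum of $V$ over $\partial B_r(x^*)\cap\Kc$ need not be attained and could in principle equal $V(x^*)$, while $\omega(x)$ could contain points of $\overline{\Kc}\setminus\Kc$ where $V$ is undefined. This is not a defect of your strategy --- any proof needs this --- and it is harmless in the paper, where the lemma is only invoked with $\Kc=\Cc$ (closed by continuity of $g,h$) and $\Kc=\real^n$, but you should state the local closedness assumption rather than leave it implicit.
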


The next results provides a test for attractivity and stability of a
set of nonisolated equilibria, using an ``arclength''-based Lyapunov
test~\cite[Thm. 4.3 and Theorem 5.2]{SPB-DSB:10}.

\begin{lemma}[{{Arclength-based Lyapunov test}}]
  \label{lem:cont-stability}
  Let $\Kc$ 
  be a forward invariant set of $\dot{x} = F(x)$. Let $\Sc \subset \Kc$
  be a set of equilibria and $U \subset \real^n$ an open set
  containing $\Sc$ where $U \cap F^{-1}(\{0 \}) = \Sc$.
  Let $V:U \cap \Kc \to \real$ be a continuous
  function. Consider the following conditions.
  \begin{enumerate}
    \item There exists a $c > 0$ such that for all $x \in U \cap \Kc$,
    \begin{equation}
      \label{eq:arclength}
      \Dini_F V(x) \leq -c \norm{F(x)}.
    \end{equation}
    \item $x^*$ is a minimizer of $V$ if and only if $x^* \in \Sc$. 
  \end{enumerate}
  If (i) holds then every bounded trajectory that starts in $U \cap \Kc$ and 
  remains in $U \cap \Kc$ for all time has finite arclength and converges 
  to a point in $\Sc$. If (i) and (ii) hold
  then, in addition, every $x^* \in \Sc$ is semistable relative to $\Kc$.
\end{lemma}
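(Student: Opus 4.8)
The plan is to convert the infinitesimal decrease condition~\eqref{eq:arclength} into an a priori bound on the arclength of trajectories, deduce pointwise convergence from the Cauchy criterion, and then bootstrap the same bound into a confinement estimate yielding Lyapunov stability. First I would observe that~\eqref{eq:arclength} gives $\Dini_F V(x) \le 0$ on $U \cap \Kc$, so along any trajectory $x(\cdot)$ that remains in $U \cap \Kc$ the map $t \mapsto V(x(t))$ is nonincreasing; since the trajectory is bounded and $V$ is continuous on the (precompact) range of the trajectory, $V(x(t))$ is also bounded below and hence converges to some $V_\infty \in \real$. Because $V$ is continuously differentiable, $\Dini_F V(x(t)) = \tfrac{d}{dt}V(x(t))$, so integrating~\eqref{eq:arclength} and using $\dot{x} = F(x)$ gives, for all $T \ge 0$,
\[
  \int_0^T \norm{F(x(t))}\,dt \;\le\; -\frac{1}{c}\int_0^T \Dini_F V(x(t))\,dt \;=\; \frac{1}{c}\big(V(x(0)) - V(x(T))\big) \;\le\; \frac{1}{c}\big(V(x(0)) - V_\infty\big),
\]
so the trajectory has finite arclength.

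Next, finite arclength forces $\norm{x(t) - x(s)} \le \int_s^t \norm{F(x(\tau))}\,d\tau \to 0$ as $s, t \to \infty$, so $x(t)$ converges to some $x^*$; and since $\int_0^\infty \norm{F(x(t))}\,dt < \infty$ while $\norm{F(x(t))} \to \norm{F(x^*)}$ by continuity of $F$, it follows that $F(x^*) = 0$. As $x^* \in U \cap \Kc$ and $U \cap F^{-1}(\{0\}) = \Sc$, this yields $x^* \in \Sc$, establishing the first assertion.

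For the second assertion I would add condition~(ii) and first prove Lyapunov stability relative to $\Kc$ of an arbitrary $x^* \in \Sc$. Writing $V^* = \inf_{U\cap\Kc}V$, condition~(ii) makes every point of $\Sc$ a minimizer, so $V(x^*) = V^*$. Given an open $W \ni x^*$, I would pick $r>0$ with $\overline{B(x^*,2r)} \subset W \cap U$ and, using continuity of $V$ at $x^*$, pick $\delta \in (0,r)$ so that $\norm{x-x^*}<\delta$ implies $\tfrac{1}{c}(V(x)-V^*) < r$; then set $\tilde W = B(x^*,\delta)$. For $x \in \tilde W \cap \Kc$, let $[0,T^*)$ be the maximal interval on which $\Phi_t(x) \in B(x^*,2r)$; on it the trajectory lies in $U\cap\Kc$, so the estimate above applies and gives $\norm{\Phi_t(x)-x} \le \tfrac{1}{c}(V(x)-V^*) < r$, hence $\norm{\Phi_t(x)-x^*} < r+\delta < 2r$. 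Thus the trajectory never reaches $\partial B(x^*,2r)$, and a standard continuation argument (the solution has precompact range, hence is forward complete) forces $T^*=\infty$ and $\Phi_t(x) \in B(x^*,2r) \subset W$ for all $t \ge 0$, which is Lyapunov stability relative to $\Kc$.

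Finally, for $x \in \tilde W \cap \Kc$ the trajectory is bounded and remains in $U \cap \Kc$, so by the first part it converges to some $x^{**} \in \Sc$ with $x^{**} \in \overline{B(x^*,2r)} \subset W$; and $x^{**}$ is itself Lyapunov stable by the argument of the previous paragraph applied at $x^{**}$. This is precisely the definition of semistability of $x^*$ relative to $\Kc$, and since $x^* \in \Sc$ was arbitrary the claim follows. I expect the main obstacle to be this bootstrapping step: condition~\eqref{eq:arclength} bounds the arclength only \emph{while} the trajectory stays in $U \cap \Kc$, so one must shrink $\tilde W$ and run the continuation argument simultaneously, turning the conditional arclength bound into an unconditional confinement estimate; a secondary point requiring care is verifying that $V(x(\cdot))$ is bounded below, so that the arclength bound is finite to begin with.
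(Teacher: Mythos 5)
The paper does not actually prove this lemma; it imports it from Bhat--Bernstein \cite[Theorems 4.3 and 5.2]{SPB-DSB:10}, and your argument is essentially the one underlying those results: integrate $\tfrac{d}{dt}V(x(t)) \le -c\norm{\dot x(t)}$ to obtain $\int_s^t \norm{\dot x(\tau)}\,d\tau \le \tfrac{1}{c}\bigl(V(x(s))-V(x(t))\bigr)$, deduce finite arclength and hence Cauchy convergence, kill $F$ at the limit by integrability plus continuity of $F$, and then recycle the same estimate into the confinement bound $\norm{\Phi_t(x)-x}\le \tfrac1c (V(x)-V^*)$ that yields Lyapunov stability and semistability. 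That strategy is correct, and your part~(ii) argument is sound: under (ii) one has $V \ge V^*$ on all of $U\cap\Kc$, so $V(x(\cdot))$ is automatically bounded below, and the confinement estimate traps the closure of the trajectory inside $\overline{B(x^*,2r)}\subset U$, so the limit point lies in $U\cap F^{-1}(\{0\}) = \Sc$.

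The genuine gap is in the first assertion, precisely at the point you flag. You justify boundedness below of $t\mapsto V(x(t))$ by saying the range of the trajectory is precompact and $V$ is continuous on it; but $V$ is only continuous on $U\cap\Kc$, and the closure of a bounded trajectory that remains in $U\cap\Kc$ may meet $\partial U$ (or leave $\Kc$), where $V$ need not be defined, let alone bounded. The same issue affects the step ``$x^*\in U\cap\Kc$'': the limit of the trajectory can sit on $\partial U$. These are not vacuous worries. On the punctured ball $U = B(0,1)\setminus\{0\}$ with $F(x)=-x$ and $V(x)=\norm{x}$ one gets $\Dini_F V = -\norm{F(x)}$ and $\Sc = U\cap F^{-1}(\{0\}) = \emptyset$, yet every trajectory converges to $0\notin\Sc$; and on an open annulus $\{1<r<2\}$ around an attracting limit cycle at $r=1$ one can take $V=g(r)$ with $g'(r)\sim \tfrac{K}{(r-1)(2-r)}$, which is $C^1$ on $U$ and satisfies \eqref{eq:arclength} for $K$ large, while bounded trajectories spiral onto $\{r=1\}\subset\partial U$ with infinite arclength. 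So the first assertion needs an extra ingredient --- e.g.\ that $V$ is bounded below on $U\cap\Kc$, or that the closure of the trajectory is contained in $U\cap\Kc$ --- which is present in the cited Bhat--Bernstein statements and in every application of the lemma in this paper, but is not supplied by your precompactness remark. With that hypothesis added, your proof goes through verbatim.
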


In the case where the Lyapunov function $V$ is 
definable with respect to an o-minimal structure, 
we show that the condition in \eqref{eq:arclength} for the
arclength-based Lyapunov test can be replaced with
$ \Dini_F V(x) \leq -c \norm{F(x)}\norm{\nabla V(x)}$.  This is
referred to as the ``angle-condition'' and has been
exploited~\cite{PAB-RM-BA:05, CL:07-MN} to show convergence of descent
methods to solve nonlinear programming problems. The name arises from
the fact that the inequality implies that the angle between $F(x)$ and
$\nabla V(x)$ remains bounded in a neighborhood of the equilibrium.
In
the next result, we show that the angle condition, together with the
Kurdyka-\Lojasiewicz inequality, implies that all trajectories of the
system have finite arclength. 

\begin{lemma}[Angle-condition-based Lyapunov test]
  \label{lem:angle-convergence}
  Let $\Kc$ be a forward invariant set of $\dot{x} = F(x)$. Let
  $\Sc \subset \Kc$ be a bounded set of equilibria and
  $U \subset \real^n$ a bounded open set containing $\Sc$ where
  $U \cap F^{-1}(\{0 \}) = \Sc$.  Let $V:U \cap \Kc \to \real$ be a
  differentiable function. Consider the following conditions.
  \begin{enumerate}
    \item $V$ is constant and equal to $V^*$ on $\Sc$ and definable with respect to some o-minimal structure; 
    \item There is $c_2 > 0$ such that for all $x \in U \cap \Kc$,
    \[
      \Dini_F V(x) \leq -c_2 \norm{\nabla{V(x)}}\norm{F(x)}.
    \] 
    \item $x^*$ is a minimizer of $V$ if and only if $x^* \in \Sc$. 
  \end{enumerate}
  If (i) and (ii) hold then every trajectory that 
  starts in $U \cap \Kc$ and remains in 
  $U \cap \Kc$ for all time has finite arclength and converges to a point in $\Sc$. 
  If (i)-(iii) hold then, in addition, every $x^* \in \Sc$ is semistable relative to~$\Kc$.
\end{lemma}

\begin{proof}
  {
  Suppose (i) holds. By Lemma \ref{lem:KL-inequality}, 
  there exists $c_1 > 0$
  and a strictly increasing, definable, differentiable function
  $\psi: [0, \infty) \to \real$ such that 
  $\psi'(|V(x) - V^*|) \norm{ \nabla V(x)} \geq c_1$
  for all $x \in (U \cap \Kc) \setminus \Sc$.
  Assume without loss of generality that $\psi(0) = 0$, and 
  define $\tilde{V}:U \cap \Kc \to \real$
  by  
  \[ \begin{aligned}
    \tilde{V}(x) = \begin{cases}
      \psi(V(x) - V^*) \qquad &V(x) > V^* \\ 
      0 \qquad &V(x) = V^* \\ 
      -\psi(V^* - V(x)) \qquad &V(x) < V^*.
    \end{cases}
  \end{aligned} \]
  Then for all $x \in U$ with $V(x) > V^*$, we have
  \[ 
    \begin{aligned}
      \Dini_F\tilde{V}(x) &= \psi'(V(x) - V^*) \Dini_F V(x) \\
      &\leq -c_2 \psi'(V(x) - V^*) \norm{\nabla V(x)}\norm{F(x)} \\
      &\leq -c_1c_2 \norm{F(x)}.
    \end{aligned}
  \]
  A similar argument can be used to show that the previous inequality 
  also holds when $V(x) \leq V^*$. Since $\psi$ is increasing, 
  $x^* \in U \cap \Kc$ is a local minimizer
  of $\tilde{V}$ if and only if $x^*$ is a local minimizer of $V$. Hence, 
  the result 
  follows by applying
  Lemma \ref{lem:cont-stability} with the Lyapunov function $\tilde{V}$.  
  }
\end{proof}

\section{Regularity of Systems of Linear Inequalities}
The proof of Lemma \ref{lemma:feedback-feasibility}, requires the following 
technical result which gives conditions under which a linear system of 
inequalities is regular. 

\begin{lemma}
    \label{lem:double}
    Consider a linear inequality system in the variables $(u, v) \in \real^{m} \times \real^{k}$
    with the form 
    \begin{subequations}
        \label{eq:uv-system}
        \begin{align}
        \label{eq:uv-ineq}
        &G_1 u + G_2 v \leq c \\
        &H_1 u + H_2 v = h \\
        &u \geq 0
        \end{align}
    \end{subequations}
    where
    $c \in \real^{m}$, $d \in \real^{k}$, $G_1 \in \real^{m \times m}$,
    $G_2 \in \real^{m \times k}$, $H_1 \in \real^{k \times m}$, 
    $H_2 \in \real^{k \times k}$.
    The system~\eqref{eq:uv-system} is regular if
    \begin{enumerate}
      \item There exists $(u_0, v_0)$ satisfying \eqref{eq:uv-system} where 
      $G_1u_0 + G_2v_0 < c$;
      \item $H_2$ is full rank.
    \end{enumerate}
\end{lemma}
\begin{proof}
    By \cite[Theorem 2]{SMR:75}, the system \eqref{eq:uv-system} is
    regular if and only if there exists $(u_0, v_0)$ satisfying \eqref{eq:uv-system} where 
    $G_1u_0 + G_2v_0 < c$
    and the following system is regular: 
    \begin{subequations}
        \label{eq:reduced}
        \begin{align}
        H_1 u + H_2 v &= d \\ 
        u &\geq 0.
        \end{align} 
    \end{subequations}
    We claim that \eqref{eq:reduced} is regular whenever $H_2$ has
    full rank. Indeed, by a second application of \cite[Theorem
    2]{SMR:75}, \eqref{eq:reduced} is regular if and only if
    \begin{enumerate}
    \item[(a)] There exists $(u_1, v_1)$ with $u_1 > 0$ and
    $H_1u_1 + H_2v_1 = d$;
    \item[(b)] $[H_1 \;\; H_2]$ has full rank.
    \end{enumerate}
    If $H_2$ has full rank, (a) holds since for any $u_1 > 0$,
    we can always find some $v_1$ such that~$H_2 v_1 = h - H_1u_1$ 
    and (b) holds since if $H_2$ is
    full rank, $\text{range}([H_1 \;\; H_2]) = \text{range}(H_2)$.
\end{proof}

\section{Locally Bounded Set of Lagrange Multipliers}

The proof of Theorem~\ref{thm:stability}\ref{item:aymptotic-stability}
requires the following result, which establishes conditions under
which ${\Lambda_\alpha(x)}$ is locally bounded.

\begin{lemma}[Local boundedness of
    ${\Lambda_\alpha(x)}$]\label{lem:uniform-boundedness}
  Let $x^* \in \KKT$ and suppose MFCQ is satisfied at $x^*$. Let $U$
  be a bounded, open set containing $x^*$ on which
  \eqref{eq:smooth-proj-grad} is well defined and
  ${\Lambda_\alpha(x)} \neq \emptyset$ for all $x \in U$. Then,
  there exists $B < \infty$ with
  \begin{equation}
    \label{eq:uniform-bound}
    \sup_{x \in U} \left\{ \sup_{(u, v) \in \Lambda_\alpha(x)}
      \norm{ (u,v)}_\infty \right\} <
    B. 
  \end{equation}
  \vspace{0.1ex}
\end{lemma}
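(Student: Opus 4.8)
The plan is to argue by contradiction with a normalization-and-limit argument, following the classical pattern (Gauvin) by which boundedness of multiplier sets is tied to MFCQ. Suppose \eqref{eq:uniform-bound} fails for every finite $B$; then there are sequences $\{x_n\} \subset U$ and $(u_n, v_n) \in \Lambda_\alpha(x_n)$ with $t_n := \norm{(u_n, v_n)}_\infty \to \infty$, and (passing to a subsequence, using that $\overline{U}$ is compact) $x_n \to \bar{x} \in \overline{U}$. Since $\Gc_\alpha$ is locally Lipschitz on a neighborhood of $\overline{U}$, cf.~Proposition~\ref{prop:Lipschitz} (shrinking $U$ around $x^*$ if necessary), the vectors $\xi_n := \Gc_\alpha(x_n)$ and $\nabla f(x_n)$ are bounded; passing to a further subsequence, $\xi_n \to \bar{\xi}$, and $\bar\xi$ is feasible for \eqref{eq:smooth-proj-grad} at $\bar x$ because feasibility is preserved under the limit. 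Normalizing, $(\hat u_n, \hat v_n) := (u_n, v_n)/t_n$ has $\norm{(\hat u_n, \hat v_n)}_\infty = 1$ and $\hat u_n \geq 0$, so along a subsequence $(\hat u_n, \hat v_n) \to (\hat u, \hat v)$ with $\norm{(\hat u, \hat v)}_\infty = 1$ and $\hat u \geq 0$.

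Next I would pass to the limit in the KKT system \eqref{eq:KKT_proj} solved by $(\xi_n, u_n, v_n)$ at $x_n$. Dividing the stationarity identity \eqref{eq:dlagrangian} by $t_n$, the term $(\xi_n + \nabla f(x_n))/t_n$ vanishes and, using continuity of the Jacobians, we obtain
\[
  \pfrac{g(\bar x)}{x}^\top \hat u + \pfrac{h(\bar x)}{x}^\top \hat v = 0 .
\]
Dividing the complementarity identity \eqref{eq:transversality} by $t_n$ and passing to the limit yields $\hat u^\top\big(\pfrac{g(\bar x)}{x}\bar\xi + \alpha g(\bar x)\big) = 0$; each summand is the product of $\hat u_i \geq 0$ with $\nabla g_i(\bar x)^\top \bar\xi + \alpha g_i(\bar x) \leq 0$ (feasibility of $\bar\xi$), hence vanishes, so $\hat u_i = 0$ for every $i$ whose constraint in \eqref{eq:smooth-proj-grad} is inactive at $(\bar x, \bar\xi)$. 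In particular, since $\bar\xi$ is close to $\Gc_\alpha(x^*) = 0$ when $U$ is small, the constraints with $g_i(x^*) < 0$ are inactive, and $\hat u$ is supported on $I_0(\bar x) \subseteq I_0(x^*)$; moreover MFCQ, being an open condition once only inactive original constraints drop out near $x^*$, holds at $\bar x$.

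The final step is to contradict $\norm{(\hat u, \hat v)}_\infty = 1$ using MFCQ at $\bar x$. Let $\xi_0$ be an MFCQ vector at $\bar x$: $\nabla h_j(\bar x)^\top \xi_0 = 0$ for all $j$, $\nabla g_i(\bar x)^\top \xi_0 < 0$ for all $i \in I_0(\bar x)$, and $\{\nabla h_j(\bar x)\}$ linearly independent. Taking the inner product of the displayed stationarity relation with $\xi_0$, the $h$-terms cancel and we get $\sum_{i \in I_0(\bar x)} \hat u_i \nabla g_i(\bar x)^\top \xi_0 = 0$; since each summand is $\le 0$, every $\hat u_i$ with $i \in I_0(\bar x)$ vanishes, so $\hat u = 0$. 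Then $\pfrac{h(\bar x)}{x}^\top \hat v = 0$ and linear independence of $\{\nabla h_j(\bar x)\}$ force $\hat v = 0$, the desired contradiction.

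I expect the main obstacle to be the accounting at the limit point $\bar x$: one must guarantee both that MFCQ still holds at $\bar x$ and that the limiting multiplier $\hat u$ is supported inside $I_0(\bar x)$, and these are clean only once attention is restricted to a sufficiently small neighborhood of $x^*$ (automatic in our applications, where in fact EMFCQ holds on all of $\overline{U}$). A secondary technical point is ensuring $\{\Gc_\alpha(x_n)\}$ is bounded, which follows from the local Lipschitzness of $\Gc_\alpha$ on a neighborhood of $\overline{U}$ established in Proposition~\ref{prop:Lipschitz}.
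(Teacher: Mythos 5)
Your overall strategy --- contradiction, normalization of the multipliers, and passage to the limit in the KKT system \eqref{eq:KKT_proj} --- is sound and genuinely different from the paper's proof, which instead cites Robinson's upper-Lipschitz stability of the solution map of the generalized equation (\cite[Corollary 4.3]{SMR:82}) together with the fact that $\Lambda_\alpha(x^*)$ equals the multiplier set of \eqref{eq:opt} at $x^*$ and is therefore bounded under MFCQ. Your route is more self-contained (it does not outsource the hard step to a stability theorem for generalized equations), and like the paper's it implicitly requires $U$ to be small enough around $x^*$, which you correctly flag.

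However, the final step as written does not close. The complementarity argument shows that $\hat u$ is supported on the set $A = \{i : \nabla g_i(\bar x)^\top \bar\xi + \alpha g_i(\bar x) = 0\}$ of \emph{active constraints of the quadratic program \eqref{eq:smooth-proj-grad}} at $(\bar x, \bar\xi)$, and this set is \emph{not} contained in $I_0(\bar x)$: an index with $g_i(\bar x)$ slightly negative and $\nabla g_i(\bar x)^\top \bar\xi = -\alpha g_i(\bar x) > 0$ lies in $A \setminus I_0(\bar x)$, and such indices arise precisely for $i \in I_0(x^*)$ when $\bar x \neq x^*$. Consequently, pairing the stationarity relation with the MFCQ vector $\xi_0$ of the \emph{original} problem at $\bar x$ only controls the summands indexed by $I_0(\bar x)$; the terms $\hat u_i \nabla g_i(\bar x)^\top \xi_0$ for $i \in A \setminus I_0(\bar x)$ have no sign, so you cannot conclude $\hat u = 0$. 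The fix is to pair instead with $d = \tilde\xi - \bar\xi$, where $\tilde\xi$ is the Slater point of \eqref{eq:smooth-proj-grad} at $\bar x$ furnished by (the proof of) Lemma~\ref{lem:necessity-optimality}: then $\nabla h_j(\bar x)^\top d = 0$ for all $j$ and $\nabla g_i(\bar x)^\top d < 0$ for every $i \in A$, which gives $\hat u = 0$ and then $\hat v = 0$ by linear independence of $\{\nabla h_j(\bar x)\}$. (Alternatively, prove local boundedness with a sequence $x_n \to x^*$ itself, where $A \subseteq I_0(x^*)$ and the MFCQ vector at $x^*$ works exactly, and recover the bound on $U$ by compactness in the applications where EMFCQ holds on $\overline U$.)
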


\begin{proof}
  By~\cite[Cor. 4.3]{SMR:82}, the solution map of
  \eqref{eq:smooth-proj-grad},
  $x \mapsto \{ \Gc_\alpha(x) \} \times \Lambda_\alpha(x)$, satisfies the
  Lipschitz stability property that there exists $\ell > 0$ where
  \begin{align}
    \label{eq:auxx}
    \norm{{\Gc}_\alpha(x)} + \dist_{\Lambda_\alpha(x^*)}(u, v) \leq \ell\norm{x
    - x^*},    
  \end{align}
  for all $(u, v) \in \Lambda_\alpha(x)$ and all $x \in U$.  By
  Proposition~\ref{prop:equilibria},~$\Lambda_\alpha(x^*)$ is precisely the
  set of Lagrange multipliers of~\eqref{eq:opt} at~$x^*$, so MFCQ
  implies that $\Lambda_\alpha(x^*)$ is bounded~\cite{DPB:99}.  Suppose by
  contradiction that \eqref{eq:uniform-bound} does not hold. Then
  there exists a sequence $\{ x^\nu \}_{\nu=1}^{\infty} \subset U$ and
  $(u^\nu, v^\nu) \in \Lambda_\alpha(x^\nu)$ where
  $\norm{(u^\nu, v^\nu)}_\infty \to \infty$
  as $\nu \to \infty$. Since $\Lambda_\alpha(x^*)$ is bounded,
  $\norm{{\Gc}_\alpha(x^\nu)} + \dist_{\Lambda_\alpha(x^*)}((u^\nu, v^\nu)) \to
  \infty$, which contradicts~\eqref{eq:auxx} and the fact that $U$ is
  bounded.
\end{proof}

\section{Jacobian of Safe Gradient Flow}
\label{ap:jacobian}
  The proof of Theorem \ref{thm:stability}(iii) relies on 
  analyzing the Jacobian of $\Gc_\alpha(x)$ at $x^*$. Here, we flesh out 
  the steps required to obtain the expression 
  for $\pfrac{\Gc_\alpha(x^*)}{x}$ in \eqref{eq:jac-G}. 
  Let $J = I_-(x^*)$ and assume, without loss of generality, that 
  the rows of $g(x^*)$ are ordered such that 
  $I_0 = \{1, 2, \dots, |I_0| \}$ and
  $J = \{|I_0| + 1, \dots, m \}$.  Let 
  $G = \pfrac{g(x^*)}{x}$, $G_I = \pfrac{g_{I_0}(x^*)}{x}$,
  $G_J = \pfrac{g_J(x^*)}{x}$ and $H = \pfrac{h(x^*)}{x}$.

  By the reasoning in the proof of Lemma \ref{lem:jacobian}, 
  strict complementarity and the strong second order sufficient 
  condition hold for the parametric optimization problem~\eqref{eq:smooth-proj-grad}. Therefore, 
  by~\cite[Theorem 2.3]{AVF:76} 
  it follows that the KKT triple\footnote{In this section we use the 
  convention that, in a KKT triple, the Lagrange multipliers corresponding 
  to the equality constraints are written before those corersponding to the inequality constraints.
  This ensures the Schur complement of $M$ in~\eqref{eq:M-partition} is block upper triangular, simplifying
  the computation of $M^{-1}$. }
  of \eqref{eq:smooth-proj-grad}, 
  written as $({\Gc}_\alpha(x),v(x), u(x))$, is differentiable 
  at~$x^*$ and the Jacobian $\Jc=\pfrac{}{x}({\Gc}_\alpha(x),v(x), u(x))|_{x=x^*}$ is 
  \begin{equation}
      \label{eq:Jdef}
      \begin{aligned}
          \Jc &= \begin{bmatrix}
              I & H^\top & G^\top \\ 
              -H & 0 & 0 \\ 
              -D_uG & 0 & -\alpha D_g
          \end{bmatrix}^{-1} \begin{bmatrix}
              -Q \\ \alpha H \\ \alpha D_u G 
          \end{bmatrix},
          \end{aligned} 
  \end{equation}
  where  
  $D_u = \textnormal{diag}(u^*)$ and $D_g = \textnormal{diag}(g(x^*))$. 
  By strict complementarity, 
  \[ \begin{aligned}
    &D_u = \begin{bmatrix}
      \tilde{D}_u & 0 \\ 0 & 0
    \end{bmatrix} \qquad &&D_g = \begin{bmatrix}
    0 & 0 \\ 0 & \tilde{D}_g
    \end{bmatrix} 
  \end{aligned}  \]
  where $\tilde{D}_u = \text{diag}(u^*_{I_0})$
  and $\tilde{D}_g = \text{diag}(g_J(x^*))$ are invertible matrices. Thus 
  the product \eqref{eq:Jdef} can be writen as 
  \[
    \label{eq:jacobian}
    \begin{aligned}
      &\Jc= 
    &\begin{bmatrix}
      I & H^\top & G_I^\top & G_J^\top \\ 
      -H & 0 & 0 & 0 \\ 
      -\tilde{D}_uG_I & 0 & 0 & 0 \\
      0 & 0 & 0 & -\alpha \tilde{D}_g
    \end{bmatrix}^{-1} \begin{bmatrix}
      -Q \\ \alpha H \\ \alpha \tilde{D}_u G_I \\ 0
    \end{bmatrix}.
    \end{aligned}
  \]
  Let $M$ be defined as  
  \begin{equation}
    \label{eq:M-partition}
    \begin{aligned}
      &M= 
    &\left[\begin{array}{c|ccc}
      I & H^\top & G_I^\top & G_J^\top \\ 
      \hline
      -H & 0 & 0 & 0 \\ 
      -\tilde{D}_uG_I & 0 & 0 & 0 \\
      0 & 0 & 0 & -\alpha \tilde{D}_g
    \end{array}\right].
    \end{aligned}
  \end{equation}
  Partitioning $M$ into a $2\times2$ block matrix as 
  in \eqref{eq:M-partition} allows us to compute its 
  inverse in closed form. 
  Let $N = M^{-1}$. Then by \cite[Theorem 2.1]{TTL-SHS:02},
  \[
    N = \begin{bmatrix}
      N_{11} & N_{12} \\ 
      N_{21} & N_{22}
    \end{bmatrix},
  \]
  where 
  \[
    \begin{aligned}
      N_{11} &= I - \begin{bmatrix}
        H \\ G_I
      \end{bmatrix}^\top\begin{bmatrix}
        HH^\top & HG_I^\top \\
          \tilde{D}_uG_IH^\top & \tilde{D}_uG_IG_I^\top
        \end{bmatrix}^{-1}\begin{bmatrix}
        H \\ \tilde{D}_uG_I
      \end{bmatrix}, \\
      N_{12} &= \begin{bmatrix}
        -\begin{bmatrix}
          H \\ G_I
        \end{bmatrix}^\top \begin{bmatrix}
          HH^\top & HG_I^\top \\
          \tilde{D}_uG_IH^\top & \tilde{D}_uG_IG_I^\top
        \end{bmatrix}^{-1} & \times
      \end{bmatrix}, \\
      N_{21} &= \begin{bmatrix}
        \begin{bmatrix}
          HH^\top & HG_I^\top \\
          \tilde{D}_uG_IH^\top & \tilde{D}_uG_IG_I^\top
        \end{bmatrix}^{-1} \begin{bmatrix}
          H \\ \tilde{D}_u G_I
        \end{bmatrix} \\ 0
      \end{bmatrix}, \\
      N_{22} &= \begin{bmatrix}\begin{bmatrix}
        HH^\top & HG_I^\top \\
        \tilde{D}_uG_IH^\top & \tilde{D}_uG_IG_I^\top
      \end{bmatrix}^{-1} & \begin{matrix}
        \hspace{1.1ex}\times \\ \hspace{1.1ex}\times 
      \end{matrix} \\
      \begin{matrix}
        0 \qquad\qquad & \qquad 0
      \end{matrix} & \times
    \end{bmatrix}, 
    \end{aligned} 
  \]
  where in the previous expressions 
  we replace values that will eventually be canceled out by $\times$.
  It follows that 
  \begin{subequations}
    \begin{align}
      \label{eq:jac-unsimplified}
      \pfrac{\Gc_\alpha(x^*)}{x} &= -N_{11}Q + \alpha N_{12}\begin{bmatrix}
        H \\ \tilde{D}_uG_I \\ 0
      \end{bmatrix}, \\  
      \begin{bmatrix}
        \pfrac{v(x^*)}{x} \vspace{0.5ex} \\ \pfrac{u(x^*)}{x}
      \end{bmatrix} &=  -N_{21}Q + \alpha N_{22}\begin{bmatrix}
        H \\ \tilde{D}_uG_I \\ 0
      \end{bmatrix}.    
    \end{align}
  \end{subequations}

  To simplify \eqref{eq:jac-unsimplified}, we start by letting $P$ be 
  the orthogonal projection onto $\ker \pfrac{g_{I}(x^*)}{x} \cap \ker \pfrac{h(x^*)}{x}$. 
  By \cite[Proposition 6.1.6]{DSB:05}, 
  \[ P = I -  \begin{bmatrix}
    H \\ G_I
  \end{bmatrix}^\dagger \begin{bmatrix}
    H \\ G_I
  \end{bmatrix}.
  \]
  Since LICQ holds at $x^*$, the matrix $[H; G_{I}]$ has full row rank, 
  so by \cite[Proposition 6.1.5]{DSB:05},
  \[ \begin{bmatrix}
    H \\ G_I
  \end{bmatrix}^\dagger = \begin{bmatrix}
    H \\ G_I
  \end{bmatrix}^\top\begin{bmatrix}
    HH^\top & HG_I^\top \\
    G_IH^\top & G_IG_I^\top
  \end{bmatrix}^{-1}.  \]
  Therefore, if we let $D = \textnormal{blkdiag}(I, \tilde{D}_u)$,
  then $D$ is invertible, and we have 
  \[ \begin{aligned}
    N_{11} &= I - \begin{bmatrix}
      H \\ G_I
    \end{bmatrix}^\top\left(D\begin{bmatrix}
    HH^\top & HG_I^\top \\
    G_IH^\top & G_IG_I^\top
  \end{bmatrix}\right)^{-1}D\begin{bmatrix}
    H \\ G_I
  \end{bmatrix} \\
  &= I -  \begin{bmatrix}
    H \\ G_I
  \end{bmatrix}^\top\begin{bmatrix}
    HH^\top & HG_I^\top \\
    G_IH^\top & G_IG_I^\top
  \end{bmatrix}^{-1}D^{-1}D\begin{bmatrix}
    H \\ G_I
  \end{bmatrix} \\
  &= P,
  \end{aligned} \]
  and, 
  \[
  \begin{aligned}
     N_{12}&\begin{bmatrix}
      H \\ \tilde{D}_uG_I \\ 0
    \end{bmatrix} \\
    &=-\begin{bmatrix}
      H \\ G_I
    \end{bmatrix}^\top \begin{bmatrix}
      HH^\top & HG_I^\top \\
      \tilde{D}_uG_IH^\top & \tilde{D}_uG_IG_I^\top
    \end{bmatrix}^{-1}\begin{bmatrix}
      H \\ \tilde{D}_uG_I
    \end{bmatrix} \\
    &= -\begin{bmatrix}
      H \\ G_I
    \end{bmatrix}^\top\left(D\begin{bmatrix}
    HH^\top & HG_I^\top \\
    G_IH^\top & G_IG_I^\top
  \end{bmatrix}\right)^{-1}D\begin{bmatrix}
    H \\ G_I
  \end{bmatrix} \\
  &= -(I - P).
  \end{aligned} 
  \]
  By substituting the previous two expressions into \eqref{eq:jac-unsimplified}, 
  we obtain
  \[ \pfrac{\Gc_\alpha(x^*)}{x} = -PQ -\alpha(I - P).  \]
\bibliographystyle{ieeetr}

\section*{References}
\vspace{-2.5ex}

\end{document}